\documentclass{amsart}
\usepackage[utf8]{inputenc}
\usepackage{a4wide}
\usepackage{amsmath,amsfonts,amssymb}
\usepackage{amsthm}
\usepackage{mathtools}
\usepackage{bbm}
\usepackage{ifthen}
\usepackage{longtable}
\usepackage{array}
\usepackage{url}
\usepackage{enumerate}
\usepackage{enumitem}
\usepackage{xcolor}
\usepackage{float}
\usepackage{graphicx}
\usepackage{todonotes}
\usepackage{underscore} 
\usepackage{hyperref}
\usepackage{cleveref}

\usepackage{titlesec}
\titleformat{\section}
  {\center\normalsize\bfseries}{\thesection.}{1em}{}
\titleformat{\subsection}
  {\normalsize\bfseries}{\thesubsection.}{1ex}{}

\hypersetup{colorlinks,
	linkcolor=green!50!black,
	citecolor=blue!70!black,
	urlcolor=red!50!black
}

\setlist{topsep = 4pt, itemsep=4pt} 

\theoremstyle{plain}
    \newtheorem{thm}{Theorem}[section]
    \newtheorem{lemma}[thm]{Lemma}

    \newtheorem{example}[thm]{Example}

\theoremstyle{definition}
    \newtheorem{definition}[thm]{Definition}
    \newtheorem{rem}[thm]{Remark}

\numberwithin{equation}{section}

\newcommand{\caseI}[1]{\medskip \noindent \textbf{Case #1:}}

\DeclarePairedDelimiter\ceil{\lceil}{\rceil}
\DeclarePairedDelimiter\floor{\lfloor}{\rfloor}
\DeclarePairedDelimiter\abs{\lvert}{\rvert}
\DeclarePairedDelimiter\absleft{\langle}{\rangle_{\textnormal{left}}}
\DeclarePairedDelimiter\absright{\langle}{\rangle_{\textnormal{right}}}
\DeclarePairedDelimiter\absside{\langle}{\rangle_*}
\DeclarePairedDelimiter\abssideother{\langle}{\rangle_{**}}

\newcommand{\Z}{\mathbb{Z}}

\newcommand{\R}{\mathbb{R}}

\newcommand{\T}{\mathbb{T}}

\renewcommand{\phi}{\varphi}
\newcommand{\aw}{\mathbf{a}}

\newcommand{\fw}{\mathbf{f}}
\newcommand{\eps}{\varepsilon}
\newcommand{\one}{\mathbbm{1}}

\newcommand{\fract}[1]{\left\{ #1 \right\}}
\DeclarePairedDelimiter\dist{\lVert}{\rVert}

\newcommand{\nsmall}{n_{\textnormal{small}}}
\newcommand{\nlarge}{n_{\textnormal{large}}}
\newcommand{\fleft}{f_{\textnormal{left}}}
\newcommand{\fright}{f_{\textnormal{right}}}
\newcommand{\xleft}{x_{\textnormal{left}}}
\newcommand{\xright}{x_{\textnormal{right}}}
\newcommand{\f}{f_{*}}
\newcommand{\x}{x_{*}}
\newcommand{\xstar}{\widehat{x}_*}
\newcommand{\xleftstar}{\widehat{x}_{\textnormal{left}}}
\newcommand{\xrightstar}{\widehat{x}_{\textnormal{right}}}

\begin{document}
\title[Balanced Rectangles]{Balanced rectangles over Sturmian words and
minimal discrepancy intervals}
\subjclass[2020]{11J71, 68R15, 11J70} 
\keywords{Sturmian words, balancedness, discrepancy, Ostrowski representation, distribution modulo 1}
\thanks{Research funded by the Austrian Science Fund (FWF) 10.55776/J4850.
}

\author[I. Vukusic]{Ingrid Vukusic}
\address{I. Vukusic,
Department of Mathematics, University of York, Ian Wand
Building, Deramore Lane, York, YO10 5GH, United Kingdom}
\email{ingrid.vukusic\char'100york.ac.uk}

\begin{abstract}
We consider $m\times n$ rectangular matrices formed from Sturmian words with slope $\alpha$, and we fully characterise their balance properties in terms of the Ostrowski representations of $m$ and $n$ with respect to $\alpha$.
This generalises recent results by Anselmo et al., as well as those by Shallit and the author, where only quadratic irrational slopes were considered.
In contrast to the two mentioned papers, the approach in this paper is based on the distribution of $n\alpha \bmod 1$. 
\end{abstract}

\maketitle

\section{Introduction}

Let $\alpha \in (0,1)$ be irrational and $\theta \in [0,1)$. Then the \emph{Sturmian word $\aw = a_1 a_2 a_3 \cdots$ with slope $\alpha$ and intercept $\theta$} can be defined via
\begin{equation}\label{eq:def}
    a_n := \floor{(n+1)\alpha + \theta} - \floor{n \alpha + \theta} \in \{0,1\}.
\end{equation}
For example, setting $\alpha = (3 - \sqrt{5})/2$ and $\theta = 0$ we get the famous infinite Fibonacci word 
\[
    \fw = 0 1 0 01 010 0100101\cdots.
\]
Recall that a \emph{factor} is simply a contiguous block of symbols within a word, and the \emph{weight} of a factor of a binary word is the number of $1$'s contained in it.
One of the basic properties of Sturmian words is that they are \emph{balanced}, that is, the weights of any two factors of the same length differ by at most $1$. (For example, in the Fibonacci word above, all factors of length $5$ either contain exactly one or exactly two $1$'s.)
In fact, the converse holds as well: every non-periodic balanced word is a Sturmian word.

The notion of balance was extended to multidimensional words by Berth\'e and Tijdeman~\cite{BertheTijdeman2002}. In particular, they proved that in dimension larger than $1$ only periodic words can be balanced.
Among other results, they also considered $2$-dimensional Sturmian words and gave a quantitative measure of their non-balancedness. 
Recently, Anselmo et al.\ \cite{AnselmoGiammarresiMadoniaSelmi2025} proved that a certain family of rectangles of the Fibonacci word is balanced, and a full characterisation was provided in \cite{ShallitVukusic2025}.
Let us discuss these results in a bit more detail because the goal of the present paper is to generalise them.

For every infinite word $\aw = a_1 a_2 a_3 \cdots$ let us define the infinite Hankel matrix $A = (a_{k, \ell})_{k\geq 1, \ell\geq 0}$ by $a_{k,\ell} := a_{k+\ell}$.
Then we can consider $m \times n$ submatrices
where the entry in the upper left corner has index sum $i$:
\[
    A(i,m,n)
    := \begin{pmatrix}
        a_i & a_{i+1} & \dots & a_{i+n-1} \\
        a_{i+1} & a_{i+2} & \dots & a_{i+n} \\
        \vdots & \vdots &     & \vdots \\
        a_{i+m-1} & a_{i+m} & \dots & a_{i+m+n-2} \\
    \end{pmatrix}.
\]
The sum over all entries in the matrix $A(i,m,n)$ equals
\begin{equation}\label{eq:T}
    T(i,m,n)
    := \sum_{k = 0}^{m-1} \sum_{\ell = 0}^{n-1} a_{i+k+\ell}.
\end{equation}
    
\begin{definition}\label{def:bal_rect}
Let $\aw = a_1 a_2 a_3\cdots$ be an infinite word over $\{0,1\}$. We say that the $m\times n$ rectangles of $\aw$ are \emph{balanced} if there exists an integer $c=c(\aw, m,n)$ such that
\[
    T(i,m,n) \in \{c, c+1\}
\]
for all $i\geq 1$.\footnote{This notion of balance for rectangles appears in \cite{Puzynina2019}, described via ``abelian complexity''.}
\end{definition}
For example, for the Fibonacci word it turns out that the $2\times 3$ rectangles are balanced (they always have weight $2$ or $3$), whereas the $2\times 4$ rectangles are not balanced (they can have weight $2$, $3$, or $4$).
Anselmo et al.~\cite{AnselmoGiammarresiMadoniaSelmi2025} proved that if $\max(m, n)$ is a Fibonacci number, then the $m\times n$ rectangles of the Fibonacci word are balanced.
In \cite{ShallitVukusic2025}, a full characterisation of balancedness was given in terms of the Zeckendorf representations of $m,n$. (The Zeckendorf representation of a positive integer is its unique representation as the sum of distinct and non-consecutive Fibonacci numbers).
Moreover, it was described how the software \emph{Walnut} can be used to do the same for every fixed quadratic irrational $\alpha$ and the corresponding representations. Note that the assumption that $\alpha$ is a quadratic irrational is essential for using Walnut, since quadratic irrationals are precisely the numbers with eventually periodic continued fraction expansion.

In this paper, we completely solve the $2$-dimensional balance problem for all irrationals~$\alpha$. 
The characterisation (Theorem~\ref{thm:bal_char_sturmian} in the next section) is in terms of the Ostrowski representations of $m,n$ with respect to $\alpha$.
The proof is based on Diophantine approximation and ideas used by Berthé and Tijdeman~\cite{BertheTijdeman2002}.
Berthé and Tijdeman also mentioned the connection between balance and so-called bounded remainder sets, which is a concept from dynamical systems/discrepancy theory.
We provide a little bit of background, as our main result will turn out to be equivalent to a specific statement about the distribution of $n\alpha \bmod 1$.
For some quick intuition on this, note that ~\eqref{eq:def} can equivalently be phrased as 
\[
    a_n = 1 :\iff \fract{n\alpha} \in [1-\alpha - \theta, 1- \theta).
\]
Here $\fract{x} = x - \floor{x}$ denotes the fractional part of $x$, and the interval is understood modulo $1$, i.e., in case $1-\alpha-\theta < 0$, we mean the interval that is ``wrapped around $0$'', $[\fract{1-\alpha - \theta}, 1) \cup [0, 1- \theta)$.
Thus, the exact distribution of $n\alpha \bmod 1$ contains full information on our Sturmian sequence.

It is well known (see, e.g., \cite[Chapt.~1]{Bugeaud_DistrModOne}) that for irrational $\alpha$ the sequence $(n\alpha)_{n\geq 0}$ is \emph{uniformly distributed modulo $1$}. 
In other words, if we consider the fractional parts $\fract{n\alpha}$ for $n = 0,1,2,\ldots$, every interval $I\subseteq [0,1)$ gets ``its fair share of points'' in the following sense:
\[
    \lim_{N\to\infty} \frac{\# \{ n \colon 0\leq n \leq N-1 \text{ and } \fract{n\alpha} \in I \} }{N}
    = |I|,
\]
where $|I|$ denotes the length of $I$.
Of course, not all intervals can get ``exactly their fair share of points'' if we consider finite sets of points. This is quantified by the \emph{discrepancy}
\[  
    D_N((n\alpha)_{n\geq 0}) = \sup_{I \subseteq[0,1]} \left|
    \frac{\# \{ n \colon 0\leq n \leq N-1 \text{ and } \fract{n\alpha} \in I \} }{N}
    - |I|
    \right|.
\]
In view of this, it seems appropriate to say that an interval $I$ has ``minimal discrepancy with respect to $\alpha$ and $N$'', if
\begin{equation}\label{eq:discrep_one}
    \big|
    \# \{ n \colon 0\leq n \leq N-1 \text{ and } \fract{n\alpha} \in I \}
    - N|I|
    \big|
    < 1.
\end{equation}
This again is somewhat related to bounded remainder sets, where the bound $1$ is relaxed to $C$ but has to be satisfied for all $N$.

In this paper, we are interested in when all intervals of a fixed length have minimal discrepancy for a fixed $N$.
More specifically, the balancedness of the $m\times n$ rectangles will turn out to be equivalent (see Theorem~\ref{thm:def-equiv}) to the balancedness of intervals of length $\fract{n\alpha}$ with respect to $\alpha$ and $m$, in the following sense.

\begin{definition}\label{def:bal_int}
Let $\alpha, \delta \in (0,1)$ and let $N\geq 1$ be an integer. We say the intervals of length $\delta$ are \emph{balanced with respect to $(\alpha, N)$} if
there exists an integer $c = c(\alpha, \delta, N)$ such that
for all half open
 intervals $I = [\xi, \xi + \delta)$, $0\leq \xi < 1$, we have 
\begin{equation}\label{eq:def_bal_intervals}
    \# \{ n \colon 0\leq n \leq N-1 \text{ and } \fract{n\alpha} \in I \} 
    \in \{c, c+1\}.
\end{equation}
    
Note that the intervals $I$ are understood modulo $1$, i.e., if $\xi+\delta \geq 1$ then the interval is $I = [\xi,1) \cup [0,\fract{\xi+\delta})$.
\end{definition}

It is not hard to see that \eqref{eq:def_bal_intervals} is indeed closely related to \eqref{eq:discrep_one}, justifying the second part of the title of this paper.

In the next section, we state the full characterisation of balanced $m \times n$ rectangles.
In Section~\ref{sec:equiv} we prove the equivalence between balanced rectangles and balanced intervals.
Then we make preparations for proving the characterisation of balanced intervals of length $\fract{n\alpha}$: In Section~\ref{sec:bij} we rephrase balancedness in terms of bijectivity of a certain function; in Section~\ref{sec:Ostr} we recall some specific properties of Ostrowski representations.
Finally, in Section~\ref{sec:proof}, we prove the full characterisation of balanced $m\times n$ rectangles.

We conclude this introduction with two remarks on the parameters $\alpha$ and $\theta$ of Sturmian words.

\begin{rem}\label{rem:theta}
In the rest of the paper, we will assume $\theta = 0$. This can be justified by the following:
It is a basic fact (see, e.g., \cite[Theorem 10.5.3]{AlloucheShallit_Automatic}) that two Sturmian words have the same slope if and only if they have the same sets of factors.
Since each rectangle $A(i,m,n)$ is fully determined by the factor $a_i \cdots a_{i+m+n-2}$, the infinite Hankel matrices corresponding to two Sturmian words with the same slope have exactly the same rectangles.
\end{rem}

\begin{rem}\label{rem:alpha}
We will also often assume $\alpha < 1/2$.
It is easy to check that the Sturmian word with slope $1-\alpha$ and intercept $0$ can be obtained by flipping the digits of the Sturmian word with slope $\alpha$ and intercept $0$.
Of course, for every pair $(m,n)$, the $m\times n$ rectangles of one sequence are balanced if and only if the $m\times n$ rectangles of the other sequence are.
Therefore, if $\alpha > 1/2$, we can equivalently consider $1-\alpha < 1/2$ instead.
\end{rem}

\section{Full characterisation via Ostrowski representations}\label{sec:mainresult}

In this section we state our main result, namely the full characterisation of the $m\times n$ balanced rectangles. But first, let us briefly recall continued fractions and the Ostrowski representation; see, e.g., \cite{RockettSzusz_contfrac} for a reference book.

Every irrational real number $\alpha$ can be uniquely represented by its infinite \emph{simple continued fraction expansion}
\[
	\alpha 
	= [a_0; a_1, a_2, \ldots]
	= a_0 + \frac{1}{a_1 + \frac{1}{a_2 + \frac{1}{\ddots}}},
\]
where $a_0$ is an integer and $a_1, a_2, \ldots$ are positive integers, called \emph{partial quotients}.
We can truncate the continued fraction expansion of $\alpha$ at its $k$-th partial quotient, and obtain the rational number $p_k/q_k = [a_0; a_1,\ldots ,a_k]$, 
called the $k$-th \emph{convergent} to $\alpha$.
These convergents are famously particularly good approximations to $\alpha$. 
In particular, the numbers $q_k \alpha$ are very close to an integer (at least for large $k$), or, in other words, very close to $0$ modulo $1$. 
Since we are interested in $n\alpha \bmod 1$, the natural way to represent an integer $n$ therefore is to write it as the sum of $q_k$'s, using a greedy algorithm. This is known as the Ostrowski representation. To be precise, every positive integer $n$ has a unique representation 
\[
    n = \sum_{k = 0}^N b_k q_k
\]
with $b_N \neq 0$, $0 \leq b_k \leq a_{k+1}$ for $k \geq 1$ and $0 \leq b_0 \leq a_1 - 1$, and the additional rule that $b_{k-1} = 0$ whenever $b_{k} = a_{k+1}$. 
For more properties of the Ostrowski representation, see Section~\ref{sec:Ostr}.
In the rest of the paper, if we write an expression of the shape $n = \sum_{k = L}^N b_k q_k$, we always imply that it is a valid Ostrowski representation with respect to $\alpha$, but we do not necessarily assume $b_L,b_N>0$.

Now we state our characterisation of balanced $m\times n$ rectangles of a Sturmian word with slope $\alpha$ in terms of the Ostrowski representations of $m,n$.
Since the infinite Hankel matrix $A = (a_{k, \ell})_{k\geq 1, \ell\geq 0}$ is symmetric, the balance problem for rectangles is symmetric, and from now on we assume $m\leq n$.
Moreover, we can assume $m\geq 2$ because for $m=1$ the rectangles are just the factors of the $1$-dimensional Sturmian word, which are of course balanced.
It turns out that there are essentially  only two situations when the rectangles with $2\leq m \leq n$ are balanced:
\begin{itemize}
\item The integer $m$ only has small digits, and $n$ only has large digits in its Ostrowski representation with respect to $\alpha$. In the edge case, where $m$ and $n$ share exactly one digit, there are some extra conditions.
\item The integer $m$ is either the denominator of a convergent or of a semi-convergent. 
(A semi-convergent is a number of the shape $(p_{k-1} + a p_{k})/(q_{k-1} + a q_{k})$ with $1 \leq a \leq a_{k+1}-1$.)
Moreover, parity restrictions on the corresponding digits in $n$ apply.
\end{itemize}

\begin{thm}\label{thm:bal_char_sturmian}
Let $\alpha\in (0,1)$ be irrational and $2 \leq m \leq n$. Then the $m\times n$ rectangles of the Sturmian words with slope $\alpha$ are balanced if and only if the Ostrowski representations of $m,n$ with respect to $\alpha$ are of at least one of the following four shapes.

They have ``split representations'' in the following sense:
\begin{enumerate}[label = (\roman*)]
    \item\label{it:orig_split1} 
    $m = \sum_{k = 0}^M b_k q_k$ and 
    $n = \sum_{k = M+1}^N b_k q_k$;
    \item\label{it:orig_split2} 
    $m = \sum_{k = 0}^M b_k q_k$ with $b_M \neq 0$, and 
    $n = q_M + \sum_{k = M + 1 + 2t}^N b_k q_k$ with $t\geq 0$ and $b_{M+1+2t} \neq 0$.
\end{enumerate}

The smaller number $m$ is the denominator of a (semi-)convergent, and we have certain parity restrictions on the large digits in $n$:
\begin{enumerate}[label = (\roman*)]\setcounter{enumi}{2}
    \item\label{it:orig_conv} 
    $m = q_M$ and $n = \sum_{k = 0}^{M-1} b_k q_k + \sum_{k = M+2t}^{N} b_k q_k$ with $t\geq 0$ and $b_{M+2t}\neq 0$;
    \item\label{it:orig_semiconv} 
    $m = q_{M-1} + a q_{M} $ with $1 \leq a \leq a_{M+1} - 1$ and 
    $n = \sum_{k = 0}^{M-1} b_k q_k + \sum_{k = M+1+2t}^{N} b_k q_k$ with $t\geq 0$ and $b_{M+1+2t}\neq 0$.
\end{enumerate}
\end{thm}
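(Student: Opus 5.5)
Our plan follows the roadmap announced in the introduction: first reduce rectangle balance to interval balance, then characterise interval balance via the three-gap structure of the points $\fract{k\alpha}$.

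\textbf{Step 1: from rectangles to intervals.} With $\theta=0$ (Remark~\ref{rem:theta}), the row sums of $A(i,m,n)$ telescope:
\[
T(i,m,n)=\sum_{k=0}^{m-1}\bigl(\floor{(i+k+n)\alpha}-\floor{(i+k)\alpha}\bigr)=m\floor{n\alpha}+\#\{0\le k\le m-1:\fract{(i+k)\alpha}\in[1-\fract{n\alpha},1)\}.
\]
The last count equals the number of $k\in\{0,\dots,m-1\}$ with $\fract{k\alpha}\in[\xi,\xi+\fract{n\alpha})$ modulo $1$, where $\xi=\fract{-i\alpha-\fract{n\alpha}}$. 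These $\xi$ are dense, and the count is right-continuous in $\xi$. Hence the $m\times n$ rectangles are balanced if and only if the intervals of length $\delta=\fract{n\alpha}$ are balanced with respect to $(\alpha,m)$ in the sense of Definition~\ref{def:bal_int}. This is the content of Theorem~\ref{thm:def-equiv}, which we use.

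\textbf{Step 2: balance as bijectivity.} Fix the $m$ points $\fract{k\alpha}$, $0\le k<m$, on the circle. As $\xi$ moves, the count $C(\xi)=\#\{k:\fract{k\alpha}\in[\xi,\xi+\delta)\}$ changes by $+1$ at each $\xi=\fract{k\alpha}-\delta$ and by $-1$ at each $\xi=\fract{k\alpha}$. So $C$ takes at most two consecutive values exactly when the two point sets $P=\{\fract{k\alpha}\}$ and $P-\delta$ interlace around the circle. Equivalently, the map sending each point $p\in P$ to the next point of $P$ at or after $p+\delta$ (cyclically) is a bijection that preserves cyclic order. Coincidences $\fract{k\alpha}=\fract{k'\alpha}-\delta$ can only occur if $n\equiv \pm(k'-k)$ with $|k'-k|<m\le n$, which we handle as degenerate cases.

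By the three-gap theorem, $P$ cuts the circle into gaps of at most three lengths, which are explicit in terms of $\dist{q_j\alpha}$ and the Ostrowski digits of $m$. Interlacing then reduces to the following condition: writing $\delta$ as an integer combination of these gap lengths, the remainder must land in a position compatible with every gap pattern. Concretely, we express $\fract{n\alpha}$ via the Ostrowski expansion $n\alpha\equiv\sum b_k(q_k\alpha-p_k)$. The tail $\sum_{k\ge K}b_k(q_k\alpha-p_k)$ has absolute value smaller than the gap structure at scale $K$ and alternates in sign with the parity of $k$. These are the standard Ostrowski facts collected in Section~\ref{sec:Ostr}.

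\textbf{Step 3: case analysis against the digits of $m$.} Let $M$ be the top index of $m$.
\begin{itemize}
\item If the digits of $n$ all lie strictly above $M$, then $\delta$ is within one minimal gap of $0$ modulo $1$. The count is constant except for a single $\pm1$ jump, which gives shape~\ref{it:orig_split1}.
\item If the digits of $m$ and $n$ overlap, write $n=n_{\rm low}+n_{\rm high}$. The low part shifts $P$ onto itself up to boundary effects. Balance then survives only when the boundary effects are absent, namely when $m$ has a single-term or semi-convergent form ($q_M$, or $q_{M-1}+aq_M$), for which $P$ has just two gap lengths and translation by $n_{\rm low}\alpha$ nearly permutes $P$. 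Alternatively, the overlap can consist only of the single digit $q_M$, which gives shape~\ref{it:orig_split2}.
\item In each case, the sign of the high part determines whether the small error pushes points across the boundary in the admissible direction. Since that sign alternates with the parity of its lowest index, we obtain the parity conditions in~\ref{it:orig_split2}--\ref{it:orig_semiconv}.
\end{itemize}
For the converse, for every $(m,n)$ not of these shapes, we exhibit an explicit $\xi$ at which three values occur. Typically $\xi$ sits just before a point $\fract{k\alpha}$ lying at the end of a long gap, and the witness $k$ is chosen from the Ostrowski digits.

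\textbf{Main obstacle.} The hard part is this necessity direction together with the boundary cases: the shared-digit situation in~\ref{it:orig_split2}, and digits equal to $a_{k+1}$, where the Ostrowski carry rules interact with the parity conditions. We would first prove everything for $\alpha<1/2$ (Remark~\ref{rem:alpha}), then verify the sign and size estimates for the Ostrowski tails uniformly, and only then run the case analysis, checking the edge cases against small examples such as the Fibonacci word with $2\times3$ and $2\times4$ rectangles.
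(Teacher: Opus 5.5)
Your Steps~1 and~2 are sound and amount to Theorem~\ref{thm:def-equiv} and Lemma~\ref{lem:bal_fg}. There are two harmless slips there. First, the count for $[\xi,\xi+\delta)$ is left-continuous in $\xi$, not right-continuous. Second, there are no coincidences to treat as degenerate cases, since $(n+k-k')\alpha\in\Z$ would force $n=k'-k<m$.

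The genuine gap is Step~3. It carries the entire content of the theorem, but it consists of assertions rather than arguments. Only the sufficiency of \ref{it:orig_split1} is actually argued, and the same minimal-gap observation would also give \ref{it:orig_split2}. The necessity direction is only promised (``we exhibit an explicit $\xi$''). Nothing shows that $m$ must be a convergent or semi-convergent denominator when $n$ is not of split shape, nor that the parity conditions are necessary. The three-gap criterion (``the remainder must land in a position compatible with every gap pattern'') is not a condition anyone could check.

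Your sufficiency heuristic for \ref{it:orig_conv} and \ref{it:orig_semiconv} is that translation by $n_{\mathrm{low}}\alpha$ nearly permutes $P$. This is right for $m=q_M$ but wrong for $m=q_{M-1}+aq_M$. There the points $\fract{j\alpha}$ with $m\le j<m+n_{\mathrm{low}}$ are displaced from points of $P$ by $\dist{m\alpha}=\dist{q_{M-1}\alpha}-a\dist{q_M\alpha}$. This is exactly the difference of the two gap lengths, and it always exceeds the short gap $\dist{q_M\alpha}$. What is true is only a combinatorial statement: in the balanced case, one of $f_{\mathrm{left}},f_{\mathrm{right}}$ is $\ell\mapsto \ell+n_{\mathrm{low}} \bmod m$. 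That statement needs a proof.

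The missing idea is an explicit handle on the interlacing map. The paper writes $f_*(\ell)=n+\ell-x_*(\ell)$, where $x_*(\ell)$ minimises the one-sided distance $\absside{x\alpha}$ over the window $[n+\ell-m+1,n+\ell]$ (Lemma~\ref{lem:x_min}). Bijectivity then means injectivity of $\ell\mapsto x_*(\ell)-\ell$ (Lemma~\ref{lem:x_bij}). If $n$ itself is the minimiser over $[n-m+1,n+m-1]$, every window contains it and bijectivity is immediate. Lemma~\ref{lem:xstarequalsn_repr} identifies this situation with essentially the split shapes.

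Otherwise the paper proceeds as follows:
\begin{enumerate}
\item It reflects $n\mapsto q_T-n$ (Lemmas~\ref{lem:qT-intervals} and~\ref{lem:-n_xstar}) to put the minimiser $\xstar$ below $n$. Lemma~\ref{lem:flip_parity} guarantees that this reflection preserves the parity of $k_{\geq M}$, which keeps the digit conditions invariant.
\item One-sided best approximation by (semi-)convergents (Lemmas~\ref{lem:semiconv-best-approx} and~\ref{lem:second_best}) shows that $x_*(\ell)$ is piecewise constant. It equals $\xstar$, and then $\xstar$ plus a specific (semi-)convergent denominator, on explicit consecutive ranges of $\ell$ (Lemma~\ref{lem:x-ranges}).
\item Hence a collision $x_*(\ell')-x_*(\ell)=\ell'-\ell$ exists unless $m$ is a convergent or semi-convergent denominator and the one-sided condition of Lemma~\ref{lem:xstar-parity-bij} holds. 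That one-sided condition is exactly what becomes the parity condition on $n$.
\end{enumerate}
You need some such mechanism, or an equally explicit three-gap computation. You also need separate treatment of the $b_0$-digit anomaly (Lemma~\ref{lem:q0-wrongside-improve}) and of $m<q_1$ (Lemma~\ref{lem:m-very-small}). Without these, Step~3 proves neither direction for \ref{it:orig_conv} and \ref{it:orig_semiconv}, nor the necessity of the four shapes.
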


\begin{example}
For $\alpha = \pi/4$ we have $\alpha = [0; 1, 3, 1, 1, 1, 15, 2, 72, \ldots]$, and $q_0 = 1$, $q_1 = 1$, $q_2 = 4$, $q_3 = 5$, $q_4 = 9$, \ldots.
Thus, if we want to know whether the $2\times 10$ rectangles are balanced, we can use a greedy algorithm to get the Ostrowski representations
\begin{align*}
    m &= 2 = 2q_1, \\
    n &= 10 = q_4 + q_1.
\end{align*}
This falls under the case \ref{it:orig_split2} in Theorem~\ref{thm:bal_char_sturmian}, and so we know that the $2\times 10$ rectangles are balanced.
Indeed, one can check that they always have weight $15$ or $16$.

On the other hand, let us consider $m = 5$, $n=10$:
\begin{align*}
    m &= 5 = q_3, \\
    n &= 10 = q_4 + q_1.
\end{align*}
This does not correspond to any of the cases in Theorem~\ref{thm:bal_char_sturmian}, and indeed one can check that the $5\times 10$ rectangles can have weight $38$, $39$, or $40$.
\end{example}

\begin{rem}\label{rem:Ostr-flip-alpha}
In Remark~\ref{rem:alpha} we mentioned that from a balance point of view it doesn't matter if we consider $\alpha$ or $1-\alpha$.
Indeed, the same is true for the shape of the Ostrowski representations.
The next lemma implies, in particular, that the Ostrowski representations of $m,n$ with respect to $\alpha$ are of one of the four special shapes in Theorem~\ref{thm:bal_char_sturmian} if and only if those with respect to $1-\alpha$ are.
Together with Remark~\ref{rem:alpha}, this allows us to assume $\alpha < 1/2$ in the rest of the paper, without loss of generality.  
\end{rem}

Note that for $\alpha = [0; a_1, a_2, \ldots]$ we have $\alpha > 1/2 \iff a_1 = 1$, in which case, by the rules of the Ostrowski representation, we have $b_0 = 0$. In other words, if $\alpha >1/2$, then the first possible nonzero term in the representation is $b_1 q_1$. 

\begin{lemma}\label{lem:1-alpha-Ostr}
Let $\alpha < 1/2$ be irrational and $n$ a positive integer.
Then the Ostrowski representation of $n$ with respect to $\alpha$ is $n = \sum_{k = 0}^N b_k q_k$ if and only if the Ostrowski representation of $n$ with respect to $1-\alpha$ is  $n = \sum_{k = 1}^{N+1} b_{k-1} q_k$.
\end{lemma}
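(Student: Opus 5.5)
If $\alpha = [0; a_1, a_2, a_3, \ldots]$ with $a_1 \geq 3$, I will first compute the continued fraction of $1-\alpha$. The plan is to compare the two systems of denominators and the two sets of digit constraints directly.

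\textbf{Continued fraction of $1-\alpha$.} Since $\alpha < 1/2$ we have $a_1 \geq 2$, and the standard identity gives
\[
    1-\alpha = [0; 1, a_1 - 1, a_2, a_3, \ldots].
\]
To check it, write $\alpha = 1/(a_1 + \beta)$ with $\beta = [0;a_2,a_3,\ldots]$. Then
\[
    1-\alpha = \frac{a_1-1+\beta}{a_1+\beta} = \cfrac{1}{1+\cfrac{1}{a_1-1+\beta}}.
\]
Denote the partial quotients of $1-\alpha$ by $a_1' = 1$, $a_2' = a_1-1$, and $a_{k+1}' = a_k$ for $k\geq 2$. Its denominators $q_k'$ satisfy $q_0' = 1$, $q_1' = a_1' = 1$, and $q_2' = a_2' q_1' + q_0' = a_1 = q_1$. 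By the common recursion $q_{k+1} = a_{k+1}q_k + q_{k-1}$, induction gives $q_{k+1}' = q_k$ for all $k\geq 0$. The base cases are $q_1' = 1 = q_0$ and $q_2' = q_1$, and the inductive step is $q_{k+2}' = a_{k+2}' q_{k+1}' + q_k' = a_{k+1} q_k + q_{k-1} = q_{k+1}$.

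\textbf{Matching the digit conditions.} Under $b'_k = b_{k-1}$ for $k\geq 1$ and $b'_0 = 0$, the two sums are equal because $q'_k = q_{k-1}$. It remains to match the validity rules.
\begin{itemize}
    \item For $1-\alpha$, the rule $0 \leq b'_0 \leq a'_1 - 1 = 0$ forces $b'_0 = 0$, which is consistent.
    \item At index $k=1$, the bound $0\le b'_1 \le a'_2 = a_1-1$ corresponds exactly to $0\le b_0\le a_1-1$.
    \item For $k\ge 2$, the bound $b'_k\le a'_{k+1} = a_k$ corresponds to $b_{k-1}\le a_k$.
    \item The Markov-type rule for $1-\alpha$ says $b'_k = a'_{k+1}$ implies $b'_{k-1}=0$. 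For $k\geq 2$ this translates to $b_{k-1} = a_k$ implying $b_{k-2}=0$, which is the rule for $\alpha$ at index $k-1\geq1$.
    \item For $k=1$, the condition $b'_1 = a'_2 = a_1-1$ would require $b'_0 = 0$, which holds automatically. Correspondingly, for $\alpha$ no such rule is imposed on $b_0$, since $b_0\le a_1-1$ is the only constraint there.
\end{itemize}
So the map sends valid Ostrowski representations with respect to $\alpha$ bijectively onto valid representations with respect to $1-\alpha$. By uniqueness of the Ostrowski representation, the equivalence follows, with the highest index shifting from $N$ to $N+1$.

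\textbf{Main obstacle.} The only delicate point is the bookkeeping at the boundary indices $k=0,1$. There one must check that the special rule $b_0\le a_1-1$ for $\alpha$ corresponds exactly to the ordinary rule $b'_1\le a'_2$ together with the vacuous Markov rule for $1-\alpha$. I expect this to go through directly as above.
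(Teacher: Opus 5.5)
Your proposal is correct and follows the same route as the paper: you compute $1-\alpha=[0;1,a_1-1,a_2,\ldots]$, deduce $q'_{k+1}=q_k$ from the common recurrence, and then match the digit rules, which the paper leaves implicit and you check carefully, including at the boundary indices $k=0,1$. The only blemish is your opening hypothesis ``$a_1\geq 3$'', which should read $a_1\geq 2$. The argument works verbatim for $a_1=2$, since $a_1-1=1$ is still an admissible partial quotient.
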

\begin{proof}
For $\alpha < 1/2$ and $\alpha = [0; a_1, a_2, a_3, \ldots]$ one can check that $1-\alpha = [0;1,a_1-1,a_2, a_3,\ldots]$.
Then from the recurrence formula for convergents (see also \eqref{eq:conv_recursion} in Section~\ref{sec:convergents}) one can see that $\alpha$ and $1-\alpha$ have the same sequence of denominators $q_0,q_1,q_2,\ldots$, except for the index shift.
\end{proof}

\section{Correspondence between balancedness and low discrepancy intervals}
\label{sec:equiv}

In this section we prove the equivalence between balancedness of rectangles and balancedness of intervals.
We start by checking the well known fact that Sturmian words are balanced, as the corresponding formula will be useful in a moment. 
We can directly compute the weight of a factor of length $n$ starting at index $i$ by using \eqref{eq:def} with $\theta = 0$ and telescoping:
\begin{equation}\label{eq:blocksum}
\begin{split}
    a_i + a_{i+1} + \dots + a_{i+n-1}
    &= \floor{(i+n)\alpha} - \floor{i\alpha}\\
    &= \floor{n\alpha} + \begin{cases}
        1, & \text{if } \fract{n\alpha} + \fract{i\alpha} \geq 1;\\
        0, & \text{else}.
    \end{cases}
\end{split}
\end{equation}
Therefore, the weight of every factor of length $n$ is either $\floor{n\alpha}$ or $\floor{n\alpha} + 1$, and in particular the factors of length $n$ are balanced for every $n$.

We can now use this to compute $T(i,m,n)$ (defined in \eqref{eq:T}) by summing over the rows of $A(i,m,n)$.
Note that the conditional expression in \eqref{eq:blocksum} can be expressed using the indicator function in the following way:
\[
    \one_{[1-\fract{n\alpha}, 1)}(\fract{i\alpha}).
\]
Thus, we obtain 
\begin{align*}
    T(i,m,n)
    &= \sum_{\ell = 0}^{m-1} (a_{i+\ell} + a_{i+\ell+1} + \dots + a_{i+\ell+n-1})\\
    &= m \floor{n\alpha} + \sum_{\ell = 0}^{m-1} 
        \one_{[1-\fract{n\alpha}, 1)}(\fract{(i+\ell)\alpha}).
\end{align*}
Of course, the value of $m \floor{n\alpha}$ is independent of $i$, and so the $m \times n$ rectangles are balanced if and only if the sum
\begin{align*}\label{eq:S}
    S(i,m,n) 
    := \sum_{\ell = 0}^{m-1} 
        \one_{[1-\fract{n\alpha}, 1)}(\fract{(i+\ell)\alpha})
\end{align*}
takes exactly two values for all $i$.
This and the fact that $i\alpha \bmod 1$ is dense in $[0,1)$ lead to the following theorem. (Recall Definition~\ref{def:bal_int} for the balancedness of intervals.)

\begin{thm}\label{thm:def-equiv}
Let $\aw$ be a Sturmian word with slope $\alpha$. Then the $m\times n$ rectangles are balanced if and only if the intervals of length $\fract{n\alpha}$ are balanced with respect to $(\alpha, m)$.
\end{thm}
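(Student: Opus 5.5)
The plan is to rewrite $S(i,m,n)$ as a count of points $\fract{\ell\alpha}$, $0\le \ell\le m-1$, lying in a translated interval of length $\delta:=\fract{n\alpha}$, and then to use density to pass between the discrete family of intervals (indexed by $i$) and the continuous family (indexed by $\xi\in[0,1)$). Note first that $\delta>0$, since $\alpha$ is irrational and $n\ge1$.

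The first step is the rewriting. We have $\fract{(i+\ell)\alpha}\in[1-\delta,1)$ if and only if $\fract{\ell\alpha}\in[1-\delta-\fract{i\alpha},\,1-\fract{i\alpha})$, understood modulo $1$. Setting $\xi_i:=\fract{1-\delta-i\alpha}$, this gives
\[
S(i,m,n)=N(\xi_i), \qquad N(\xi):=\#\{\ell\colon 0\le \ell\le m-1,\ \fract{\ell\alpha}\in[\xi,\xi+\delta)\}.
\]
Since $m\lfloor n\alpha\rfloor$ does not depend on $i$, the rectangles are balanced exactly when $\{N(\xi_i)\colon i\ge1\}$ is contained in some $\{c,c+1\}$. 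The intervals of length $\delta$ are balanced exactly when $\{N(\xi)\colon \xi\in[0,1)\}$ is contained in some $\{c,c+1\}$.

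One direction is immediate: the $\xi_i$ form a subset of $[0,1)$, so balance of the intervals implies balance of the rectangles.

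For the converse, I will show that every value of $N$ on $[0,1)$ is attained at some $\xi_i$.
\begin{itemize}
\item $N(\xi)$ changes only when an endpoint $\xi$ or $\xi+\delta$ crosses one of the finitely many points $\fract{\ell\alpha}$.
\item Because the intervals are half-open, $N$ is right-continuous and piecewise constant. So for each $\xi$ there is $\varepsilon>0$ with $N$ constant on $[\xi,\xi+\varepsilon)$.
\item Explicitly, take $\varepsilon$ smaller than the distance from $\xi$ to the next point $\fract{\ell\alpha}$ strictly to its right, and the same for $\xi+\delta$ (modulo $1$). Then no point enters or leaves the interval as $\xi$ moves within $[\xi,\xi+\varepsilon)$.
\item The set $\{\fract{-i\alpha}\colon i\ge1\}$ is dense in $[0,1)$, since $\alpha$ is irrational. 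Hence $\{\xi_i\}$ is dense as well, being a rotation of it, so some $\xi_i$ lies in $[\xi,\xi+\varepsilon)$, modulo $1$.
\end{itemize}
Therefore $N(\xi)=N(\xi_i)$, and the two value sets coincide. This gives the equivalence with the same constant $c$.

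The only delicate point is the half-open right-continuity argument, including the wrap-around at $1$. It is routine once $\varepsilon$ is chosen smaller than all the relevant gaps.
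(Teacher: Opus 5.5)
Your route is the paper's: rewrite $S(i,m,n)$ as the number of points $\fract{\ell\alpha}$ in the translated interval $[\xi_i,\xi_i+\delta)$, take the trivial direction, and use density of $\fract{-i\alpha}$ for the converse. The rewriting and the easy direction are fine. The converse, however, rests on a false claim: $N$ is \emph{not} right-continuous.

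For a single point $p$, the condition $p\in[\xi,\xi+\delta)$, viewed as a function of $\xi$, is the indicator of the arc $(p-\delta,p]$ (mod $1$). So $N$ is a finite sum of indicators of left-open, right-closed arcs, and is therefore \emph{left}-continuous. A concrete failure: take $\xi=0=\fract{0\cdot\alpha}$ and $n\geq m$, so that no point sits at $\delta=\fract{n\alpha}$. For every small $\eta>0$, the interval $[\eta,\eta+\delta)$ has lost the point $0$ and gained nothing, so $N(\eta)=N(0)-1$.

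Your recipe for $\varepsilon$ (``next point strictly to the right'' of $\xi$ and of $\xi+\delta$) misses exactly the points lying on the endpoints. A point at $\xi$ leaves as soon as $\xi$ moves right, and a point at $\xi+\delta$ enters as soon as $\xi$ moves right. No choice of $\varepsilon>0$ repairs this, so the step ``$N$ is constant on $[\xi,\xi+\varepsilon)$'' fails.

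This is not a negligible edge case. Since $\xi_i=\fract{-(n+i)\alpha}$ and $\xi_i+\delta=\fract{-i\alpha}$ are never of the form $\fract{\ell\alpha}$ with $\ell\geq 0$, the $\xi_i$ never hit a jump point of $N$. So the values of $N$ at jump points are precisely the ones whose attainment along $(\xi_i)$ must be argued, and your argument gives the wrong value there.

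The fix is to approximate from the left. Take $\varepsilon$ smaller than the distance from $\xi$ to the nearest point strictly to its left, and smaller than the distance from $\xi+\delta$ to the nearest point strictly to its left. Then moving the interval left by less than $\varepsilon$ neither gains a point at the left end nor loses one at the right end, so $N$ is constant on $(\xi-\varepsilon,\xi]$. By density some $\xi_i$ lies in this arc, giving $N(\xi)=N(\xi_i)$. This is exactly the paper's argument, which chooses $i$ with $1-\fract{n\alpha}-\fract{i\alpha}\in(\xi-\varepsilon,\xi]$.
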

\begin{proof}
As explained above, the $m\times n$ rectangles are balanced if and only if the sum $S(i,m,n)$ is balanced as a sequence indexed by $i$.
Note that we can rewrite $S(i,m,n)$ as
\[
    S(i,m,n) 
	= \sum_{\ell = 0}^{m-1} 
        \one_{[1-\fract{n\alpha}-\fract{i \alpha}, 1-\fract{i \alpha})}(\fract{\ell\alpha}).
\]
In other words, the $m\times n$ rectangles are balanced if and only if for every $i\geq 1$ the interval $[1-\fract{n\alpha}-\fract{i\alpha}, 1-\fract{i\alpha})$ contains either $c$ or $c+1$ of the points $\fract{0},\fract{\alpha}, \fract{2 \alpha}, \ldots, \fract{(m-1)\alpha}$ for some fixed $c$. Here, and everywhere else, the intervals are understood modulo $1$.
Thus, the balancedness of the intervals of length $\fract{n\alpha}$ with respect to $(\alpha, m)$ clearly implies the balancedness of the $m\times n$ rectangles.

For the reversed implication let $[\xi, \xi+ \fract{n\alpha})$ be an arbitrary interval of length $\fract{n\alpha}$.
If $[\xi, \xi+ \fract{n\alpha})$ contains none of the points $\fract{\ell\alpha}$ with $0\leq \ell \leq m-1$, set $\eps_1 := 1$. 
Otherwise, set 
\[
    \eps_1 := \min_{0\leq \ell \leq m-1} \fract{\xi+\fract{n\alpha} - \fract{\ell\alpha}},
\]
i.e., $\eps_1$ is the distance between the right endpoint of the interval and the rightmost point $\fract{\ell\alpha}$ contained in the interval.
Similarly, set 
\[
    \eps_2 : = \min_{\substack{0\leq \ell \leq m-1 \\\fract{\ell\alpha} \neq \xi}} \fract{\xi - \fract{\ell\alpha}},
\]
i.e., $\eps_2$ is the distance between the left endpoint of the interval and the closest point  $\fract{\ell\alpha}$  lying strictly to the left of the interval.
Set $\eps := \min\{\eps_1, \eps_2\}$
and recall that $i\alpha \bmod 1$ is dense in $[0,1)$. 
Thus, there exists an $i\geq 1$ such that $1 - \fract{n\alpha} - \fract{i\alpha} \in (\xi - \eps, \xi]$.
Then, by construction, the interval $[\xi, \xi+ \fract{n\alpha})$ contains exactly the same points as the interval $[1 - \fract{n\alpha} - \fract{i\alpha}, 1 -\fract{i\alpha})$.
Therefore, the balancedness of the $m\times n$ rectangles implies the balancedness of the intervals of length $\fract{n\alpha}$.
\end{proof}

\begin{rem}
As mentioned before,
since the infinite Hankel matrix $A = (a_{k, \ell})_{k\geq 1, \ell\geq 0}$ is symmetric, the balance problem for rectangles is symmetric.
From Theorem~\ref{thm:def-equiv} we immediately get the following fact:
Let $m,n\geq 1$ be integers. Then the intervals of length $\fract{m\alpha}$ are balanced with respect to $(\alpha, n)$ if and only if the intervals of length $\fract{n\alpha}$ are balanced with respect to $(\alpha, m)$.
\end{rem}

\begin{rem}
In view of Theorem~\ref{thm:def-equiv}, some of the cases of Theorem~\ref{thm:bal_char_sturmian} become quite obvious, provided one is familiar with the basic properties of Ostrowski representations. For example, the cases \ref{it:orig_split1} and \ref{it:orig_split2}, where $n$ has only large digits, correspond to the intervals of length $\fract{n\alpha}$ being extremely short (or extremely long, but then one can think of the complements). In fact, it is not very hard to prove that in the cases \ref{it:orig_split1} and \ref{it:orig_split2} the corresponding intervals each contain at most one point, and thus they are balanced. 

For some of the other cases, one can use other known tricks as well. 
For example, if $m = q_M$, the points $\fract{\ell \alpha}$, for $0\leq \ell \leq m-1$, are particularly evenly distributed and one can use the trick that $\alpha \approx p_M/q_M$, and therefore $\fract{\ell \alpha} \approx (\ell p_M \bmod q_M)/q_M$, to characterise balancedness.

Overall, one can say that the intervals are balanced in the two following cases: Either the intervals are very short (or very long); this corresponds to the cases \ref{it:orig_split1} and \ref{it:orig_split2} in Theorem~\ref{thm:bal_char_sturmian}. Or the points are very evenly distributed, and the interval lengths are slightly longer (or slightly shorter) than the distance
between certain two points; this corresponds to the cases \ref{it:orig_conv} and \ref{it:orig_semiconv} in Theorem~\ref{thm:bal_char_sturmian}.

In this paper we want to deal with all cases in a somewhat uniform way. To that aim, we rephrase balancedness of intervals in terms of bijectivity of a certain map $\f$. This is done in the next section. Later, we will rephrase the bijectivity of $\f$ again in terms of another function, which is more closely related to approximation properties of Ostrowski representations.
\end{rem}

\section{Balanced intervals and certain bijective maps}\label{sec:bij}

We start by defining balancedness of intervals in a slightly more general setting because this makes the arguments clearer.
Note that, as before, all intervals are understood modulo $1$. To make this more rigorous, we speak of the torus $\T = \R/\Z$, which can be thought of as the interval $[0,1)$, where we compute modulo $1$.

\begin{definition}
Let $B = \{\xi_0, \xi_1, \ldots, \xi_{m-1}\}$ be a set of $m$ distinct points on the torus $\T$ and let $\delta \in (0,1)$. We say that the intervals of length $\delta$ are \emph{balanced with respect to $B$} if there exists an integer $c = c(B,\delta)$ such that for all $\xi\in \T$ we have
\[
    \# ( [\xi,\xi+\delta) \cap B ) \in \{c, c+1\}.
\]

\end{definition}

The goal of this section is to find a way to determine this balance property without actually counting the points in each interval. This morally corresponds to the idea in \cite[Lemma~1]{ShallitVukusic2025}. 
The first step is to focus on the intervals
 $[\xi_\ell, \xi_\ell+\delta)$ for $0 \leq \ell \leq m-1$ and to find the closest points $\xi_j \in B$ to the left and to the right of $\xi_\ell+\delta$. 
We define the corresponding functions in terms of the indices of the points.

\begin{definition}\label{def:flr}
Let  $B = \{\xi_0, \xi_1, \ldots, \xi_{m-1}\} \subseteq \T$ be a set of $m$ distinct points
and let $\delta \in (0,1)$.
Then we define the following two maps on the set $\{0,1,\ldots, m-1\}$: $\fleft$ maps $\ell$ to the index of the closest point in $B$ that lies to the left of $\xi_\ell + \delta$, and $\fright$ maps $\ell$ to the index of the closest point in $B$ that lies to the right of $\xi_\ell + \delta$. In other words, 
\begin{align*}
    \fleft(\ell) = j
    \quad &:\iff \quad
    \xi_\ell + \delta - \xi_j = \min_{0 \leq i \leq m-1} (\xi_\ell + \delta) - \xi_i, \\
    \fright(\ell) = j
    \quad &:\iff \quad
     \xi_j - (\xi_\ell + \delta) = \min_{0 \leq i \leq m-1} \xi_i - (\xi_\ell + \delta),
\end{align*}
where everything is taken modulo $1$ and then ordered in the usual way in $[0,1)$.
\end{definition}

Now in the case that the interval length $\delta$ does not match the distance between any two points in $B$, we have the following main lemma.

\begin{lemma}\label{lem:bal_fg}
Let  $B = \{\xi_0, \xi_1, \ldots, \xi_{m-1}\} \subseteq \T$ be a set of $m$ distinct points.
Moreover, let $\delta \in (0,1)$ be such that $\delta \neq \xi_i - \xi_j$ for all $i,j$.
Then the following statements are equivalent:
\begin{enumerate}[label = (\alph*)]
    \item\label{it:bal_int} The intervals of length $\delta$ are balanced with respect to $B$.
    \item\label{it:f_left} The function $\fleft$ is bijective on $\{0, 1, \ldots, m-1\}$.
    \item\label{it:f_right} The function $\fright$ is bijective on $\{0, 1, \ldots, m-1\}$.
\end{enumerate}
\end{lemma}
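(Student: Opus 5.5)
The plan is to study the counting function $F(\xi) := \#([\xi,\xi+\delta)\cap B)$ as $\xi$ moves once around $\T$, and to relate its jumps to the maps $\fleft$ and $\fright$. $F$ is piecewise constant. Moving $\xi$ to the right, $F$ decreases by one exactly when the left endpoint $\xi$ passes a point $\xi_\ell$. It increases by one exactly when the right endpoint $\xi+\delta$ reaches a point $\xi_j$, which happens at $\xi = \xi_j-\delta$. By the hypothesis $\delta\neq \xi_i-\xi_j$, these two kinds of events never coincide. So the $2m$ event positions $\xi_\ell$ (decrements) and $\xi_j-\delta$ (increments) are all distinct, and each changes $F$ by exactly $\pm1$. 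The first step is to record this jump structure.

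The key observation is that balancedness, i.e. \ref{it:bal_int}, holds if and only if decrements and increments alternate around the circle. Indeed, if two decrements occur consecutively with no increment between them, $F$ takes values $v$, $v-1$, $v-2$, so it takes three values. The same holds for two consecutive increments. Conversely, if the events alternate, $F$ only oscillates between two adjacent values $c, c+1$. Since $F$ returns to its value after one full turn, there are $m$ of each type, so an alternating arrangement is consistent.

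Next I translate alternation into bijectivity of $\fright$. Shift everything by $\delta$: the decrement at $\xi_\ell$ corresponds to the position $\xi_\ell+\delta$, and the increments correspond to the points $\xi_j$ themselves. Alternation says that between any two cyclically consecutive positions $\xi_\ell+\delta$ and $\xi_{\ell'}+\delta$ there is exactly one point of $B$. This is equivalent to saying that the first point of $B$ to the right of $\xi_\ell+\delta$, namely $\xi_{\fright(\ell)}$, differs for different $\ell$. If two positions shared the same next point, there would be no point of $B$ between them. Conversely, if $\fright$ is injective, then each of the $m$ gaps between consecutive shifted positions contains at least one point, hence exactly one, since $|B|=m$. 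Injectivity on a finite set is bijectivity, so \ref{it:bal_int}$\iff$\ref{it:f_right}. The same argument with ``closest point to the left'' gives \ref{it:bal_int}$\iff$\ref{it:f_left}; equivalently, alternation is symmetric under reversing orientation.

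The main obstacle is bookkeeping with the half-open intervals and the torus conventions in Definition~\ref{def:flr}. One must check that a point sitting exactly at $\xi_\ell+\delta$ cannot occur, which is excluded by hypothesis, so ``left'' and ``right'' are strict and well defined. One must also confirm that the left-closed, right-open convention only shifts where $F$ jumps and not the set of values it attains. I would handle this by arguing only with the cyclic order of the $2m$ distinct event points, which sidesteps the endpoint conventions.
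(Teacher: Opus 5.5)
Your proposal is correct and follows the paper's own argument. Both slide the interval around $\T$, use $\delta\neq\xi_i-\xi_j$ to rule out simultaneous gains and losses, equate balance with alternation, and read alternation as exactly one point of $B$ between cyclically consecutive points $\xi_\ell+\delta$. Your pigeonhole step is a useful addition: injectivity of $\fright$ forces each of the $m$ gaps to be non-empty, hence to contain exactly one point, which spells out the converse that the paper only asserts.
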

\begin{proof}
Assume without loss of generality that $0 \leq \xi_0 < \xi_1 < \dots < \xi_{m-1} <1$. Moreover, in the following we use the notation $\xi_{m} := \xi_0$.

Start with the interval $[0,\delta)$ and observe what happens as we shift it continuously to the right, i.e., consider $[\xi, \xi+\delta)\subset \T$ as $\xi$ runs through $[0,1]$.
Every time $\xi$ increases from $\xi_\ell$ to $\xi_\ell + \eps$ for some $\ell$ and small $\eps>0$, we ``lose'' the point $\xi_\ell$.
Every time $\xi$ increases from $\xi_j - \delta$ to $\xi_j - \delta + \eps$ for some $j$, we ``gain'' the point $\xi_j$.
Since $\delta \neq \xi_j - \xi_\ell$ for all $\ell,j$, we never gain a point and lose a point at exactly the same time.
Therefore, the intervals of length $\delta$ are balanced if an only if, as we slide our interval across the torus, we always alternate between gaining and losing a point. 
In other words, the intervals of length $\delta$ are balanced if an only if for every $\ell$ there exists a unique $j$ such that $\xi_j$ lies between $\xi_\ell +\delta$ and $\xi_{\ell+1} + \delta$. (This corresponds to the fact that as we shift from $[\xi_\ell,\xi_\ell+\delta)$ to $[\xi_{\ell+1}, \xi_{\ell+1} + \delta)$ we lose $\xi_\ell$ and then gain $\xi_j$, before losing $\xi_{\ell+1}$.)
Moreover, note that in this situation $j = \fright(\xi_\ell) = \fleft(\xi_{\ell+1})$, and thus balancedness implies that $\fleft,\fright$ are bijective.

For the other implication, note that $\fleft$ being bijective or $\fright$ being bijective each imply that for every $\ell$ there exists a unique $j$ such that $\xi_j$ lies between $\xi_\ell +\delta$ and $\xi_{\ell+1} + \delta$. As described above, this is equivalent to the intervals of length $\delta$ being balanced.
\end{proof}

\begin{rem}
The assumption in Lemma~\ref{lem:bal_fg} that $\delta \neq \xi_i - \xi_j$ for all $i,j$ is necessary. For example, consider the three points $\xi_0 = 0$, $\xi_1 = 1/4$, and $\xi_2 = 1/2$, and set $\delta = 1/2$. Then it is easy to check that the half open intervals of length $\delta$ contain either $1$ or $2$ points, and thus are balanced.
However,
$\fleft(0) = 2 = \fleft(1)$ and $\fright(2) = 0 = \fright(1)$.
One can also check that changing the definitions of $\fleft,\fleft$ to contain the restriction ``strictly to the left/right'' does not resolve the issue.
\end{rem}

\begin{rem}\label{rem:Ad_appl}
In our application of Lemma~\ref{lem:bal_fg}
in Section~\ref{sec:proof} we will have 
\[
	B = \{0, \alpha, \fract{2\alpha}, \ldots, \fract{(m-1)\alpha} \} 
\]
and 
\[
	\delta = \fract{n\alpha}.
\]
Then, if we assume $n\geq m$, we indeed get that $\delta\neq \fract{i\alpha} - \fract{j\alpha}$ for all $0\leq i,j \leq m-1 <n$.
\end{rem}

Finally, we state a simple lemma which will also be useful later.
\begin{lemma}\label{lem:alternative_intervals}
Let  $B = \{\xi_0, \xi_1, \ldots, \xi_{m-1}\} \subseteq \T$ be a set of $m$ distinct points and let $\delta \in (0,1)$.
Then the intervals of length $\delta$ are balanced with respect to $B$ if and only if the intervals of length $1-\delta$ are balanced.
Moreover, if $\delta \neq \xi_i - \xi_j$ for all $i,j$, then
for all $\eps$ with $\abs{\eps}$ sufficiently small, the intervals of length $\delta$ are balanced if and only if the intervals of length $1-\delta+\eps$ are balanced.
\end{lemma}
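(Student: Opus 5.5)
The first claim is a complementation argument. For every $\xi\in\T$, the interval $[\xi,\xi+\delta)$ and the interval $[\xi+\delta,\xi+1)$ partition $\T$, and the second one has length $1-\delta$. Hence
\[
    \#([\xi+\delta,\xi+\delta+(1-\delta))\cap B) = m - \#([\xi,\xi+\delta)\cap B).
\]
As $\xi$ runs over $\T$, so does $\xi+\delta$, so every interval of length $1-\delta$ arises exactly once in this way. Suppose the counts for length $\delta$ lie in $\{c,c+1\}$. Then the counts for length $1-\delta$ lie in $\{m-c-1,m-c\}$, so these intervals are balanced with constant $m-c-1$. The converse direction is the same argument with $\delta$ replaced by $1-\delta$.

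For the second claim, by the first part it suffices to show that for all sufficiently small $\abs{\eps}$, the intervals of length $\delta':=1-\delta+\eps$ are balanced if and only if those of length $1-\delta$ are. I would compare the two families through the gain/lose sliding picture from the proof of Lemma~\ref{lem:bal_fg}. That picture only depends on the cyclic order of the finite set of events on $\T$:
\begin{itemize}
    \item losing a point occurs at $\xi=\xi_\ell$;
    \item gaining a point occurs at $\xi=\xi_j-\delta'$.
\end{itemize}
Balancedness is then equivalent to losses and gains alternating, provided no gain coincides with a loss. The hypothesis $\delta\neq\xi_i-\xi_j$ for all $i,j$ gives exactly that for length $1-\delta$ no gain coincides with a loss, since $1-\delta\neq \xi_j-\xi_\ell$ modulo $1$.

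Next, let
\[
    \eta:=\min_{i,j}\dist{\xi_j-\xi_i-(1-\delta)}>0,
\]
where $\dist{\cdot}$ denotes distance to the nearest integer, so $\eta$ is the minimal gap between a gain event and a loss event. For $\abs{\eps}<\eta$, replacing $1-\delta$ by $1-\delta+\eps$ shifts all gain events rigidly by $-\eps$. Since $\abs{\eps}$ is smaller than every gap between a gain and a loss, no gain crosses a loss, and none coincides with one. The gain events only move relative to the loss events and keep their distinctness among themselves, because they are translates of distinct points. Hence the cyclic interleaving pattern of gains and losses is unchanged, and alternation holds for $\delta'$ if and only if it holds for $1-\delta$. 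This gives the equivalence.

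Alternatively, one can argue directly: for $\abs{\eps}<\eta$ the set of points in $[\xi,\xi+1-\delta)$ and in $[\xi,\xi+1-\delta+\eps)$ differ by at most endpoint effects that can be absorbed by shifting $\xi$ slightly. The sliding argument is cleaner, however.

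The main (mild) obstacle is justifying rigorously that the alternation criterion from Lemma~\ref{lem:bal_fg} applies verbatim to the perturbed length, i.e.\ that the non-coincidence hypothesis is preserved. This is exactly the choice $\abs{\eps}<\eta$, so the obstacle reduces to fixing that constant.
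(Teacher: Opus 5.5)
Your proposal is correct and follows essentially the same route as the paper. The first claim is the same complementation argument. For the second, both you and the paper use a perturbation of the length smaller than the distance to the critical differences $\xi_i-\xi_j$, which changes nothing. The only difference is the order of steps: the paper perturbs $\delta$ itself, notes that the set of occurring counts is unchanged, and then passes to complements. You instead pass to $1-\delta$ first and then check that the gain/loss interleaving from the proof of Lemma~\ref{lem:bal_fg} is preserved.
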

\begin{proof}
The first statement is clear because the half open intervals of length $1-\delta$ correspond to the complements of the intervals of length $\delta$.
If $\delta \neq \xi_i - \xi_j$ for all $i,j$, then it is clear that for $\abs{\eps} < \min_{i,j} \abs{\delta - (\xi_i - \xi_j)}$ we can modify the interval length $\delta$ to $\delta+\eps$ without changing the occurring values of $\# ([\xi,\xi+\delta) \cap B)$.
\end{proof}

\section{Properties of convergents and Ostrowski representations}\label{sec:Ostr}

The strategy for proving our main result (Theorem~\ref{thm:bal_char_sturmian}) in the next section will be to apply
Lemma~\ref{lem:bal_fg} to the points $\xi_\ell = \fract{\ell\alpha}$ with $0 \leq \ell \leq m-1$ and $\delta = \fract{n\alpha}$.
Therefore, the right endpoints of the intervals $[\xi_\ell, \xi_\ell +\delta)$ will be of the shape $\fract{\ell\alpha} + \fract{n\alpha} = \fract{(\ell+n)\alpha}$, where $\ell+n > m-1$.
In order to determine $\fleft(\ell), \fright(\ell)$ we will need to figure out which point $\fract{j\alpha}$ with $0\leq j \leq m-1$ lies closest to the left and which one closest to the right of $\fract{(\ell+n)\alpha}$.
We will do this using the Ostrowski representations mentioned in Section~\ref{sec:mainresult}.

We now recall several properties of Ostrowski representations which that be useful. 
All lemmas in this section are probably well known to experts (except perhaps for Lemma~\ref{lem:flip_parity}, but this one is not very hard to see either).
Rather unfortunately though, there does not seem to exist a suitable reference book. In order to provide proofs for the lemmas, we recall some basic properties of convergents first; see, e.g., \cite{RockettSzusz_contfrac} for a reference. 
Throughout the rest of this paper, $\alpha \in (0,1)$ is a fixed irrational, and all Ostrowski representations are with respect to $\alpha$.
In fact, in view of Lemma~\ref{lem:1-alpha-Ostr} and Remark~\ref{rem:Ostr-flip-alpha}, we assume $\alpha < 1/2$.

\subsection{Basic properties of convergents}\label{sec:convergents}

As before, let $p_k/q_k$ denote the convergents to $\alpha\in(0,1/2)$.
Then the numerators and denominators follow the recursions 
\begin{align}
    p_0 = 0, \quad
    p_1 = 1, \quad
    p_{k+1} = a_{k+1}p_k +  p_{k-1}
    \quad \text{for } k\geq 1; \nonumber \\
    q_0 = 1, \quad
    q_1 = a_1, \quad
    q_{k+1} = a_{k+1}q_k +  q_{k-1}
    \quad \text{for } k\geq 1.\label{eq:conv_recursion}
\end{align}
Convergents are the \emph{best approximations} in the following sense:
Let $\dist{\xi} := \min_{x\in\Z} \abs{\xi - x}$ denote the distance to the nearest integer to $\xi$.
Then we have
\begin{equation}\label{eq:best_approx_prop}
    0 <  q < q_{k+1} 
    \implies
    \dist{q\alpha} \geq \dist{q_k \alpha}.
\end{equation}
We set
\[
    \delta_k : = q_k \alpha - p_k.
\]
Then we have (see, e.g., \cite[p.~9]{RockettSzusz_contfrac})
\begin{equation}\label{eq:delta_sign}
    \delta_k = (-1)^k \dist{q_k \alpha}
    \quad \text{for } k\geq 0,
\end{equation}
and, moreover, 
\begin{equation}\label{eq:qkalpha_easybound}
	\abs{\delta_k}    
    = \dist{q_k \alpha} 
    \leq 1/q_{k+1}. 
\end{equation}
In particular, since $\dist{q_k\alpha}\leq 1/q_{k+1} \leq 1/2$ for $k\geq 0$, we have
\begin{equation}\label{eq:qk_parity}
    \fract{q_k \alpha} = \begin{cases}
        \dist{q_k \alpha}, &\text{if $k$ is even};\\
        1- \dist{q_k \alpha}, &\text{if $k$ is odd}.
    \end{cases}
\end{equation}

\subsection{Basic properties of Ostrowski representations}\label{sec:Ostr-details}

Recall that the Ostrowski representation of $n$ with respect to $\alpha$ is unique and of the shape
$n = \sum_{k = 0}^N b_k q_k$,
with $0 \leq b_k \leq a_{k+1}$ for $k \geq 1$, and $0 \leq b_0 \leq a_1 - 1$, and the additional rule that $b_{k-1} = 0$ whenever $b_{k} = a_{k+1}$.
The Ostrowski representation of $n$ can be obtained by a greedy algorithm. In particular, we have
\begin{equation}\label{eq:greedy}
    n = \sum_{k = 0}^N b_k q_k 
    \quad \text{with } b_N \neq 0
    \iff q_N \leq n < q_{N+1}.
\end{equation}

The Ostrowski representation is ``the correct way to represent integers'' in the following sense:
Roughly speaking, the digits of $n$ with small index determine where $\fract{n\alpha}$ lies in $[0,1)$, and the digits with large index produce a small error term.
This is plausible in view of \eqref{eq:qkalpha_easybound}.
Let us be more precise.

First, note that from \eqref{eq:conv_recursion} and \eqref{eq:delta_sign} 
it follows that
\begin{equation}\label{eq:sum-everyother-full-convergents}
    \sum_{t = 0}^\infty a_{L+2+2t} \dist{q_{L+2t+1}\alpha}
    = \dist{q_L\alpha},
\end{equation}
for $L\geq 0$ (recall that we are assuming $\alpha<1/2$).

Let $b_k(n)$ denote the coefficient of $q_k$ in the Ostrowski representation of $n$,
and let $k_0(n)$ be the index of the first nonzero coefficient in the Ostrowski representation of $n$, i.e.,
\[
    k_0(n) : = \min\{k \colon b_k(n) > 0\}.
\]
The next lemma describes the fact that if $k_0(n)$ is large, then $n\alpha$ is close to $0$ modulo $1$.
In particular, this shows that indeed digits with large index only have little impact on the position of $\fract{n\alpha}$.
Later in this paper, we will need somewhat precise bounds for the size of $\dist{n\alpha}$ if $n$ only has large digits, which is why the lemma is a bit lengthy. For quick intuition, focus on the second estimate.

\begin{lemma}\label{lem:Ost_largedigits}
Let $\alpha \in (0,1/2)$ and $n\geq 1$ with $k_0(n) = L$. Then
\begin{enumerate}
    \item\label{it:dist_lb} $\dist{n\alpha} > \dist{q_{L+1}\alpha}$;
    
    \item\label{it:dist_ub} $\dist{n\alpha} < \dist{q_{L-1}\alpha}$, provided that $L\geq 1$.
\end{enumerate}
Some other, more precise estimates are the following:
\begin{enumerate}\setcounter{enumi}{2}

    \item\label{it:dist_lb_N} $\dist{n\alpha} \geq \dist{q_{L+1}\alpha} + \dist{q_{N+2}}$ if $n < q_{N+1}$;

    \item\label{it:dist_lb2} $\dist{n\alpha} > \dist{q_{L}\alpha} + \dist{q_{L+1}\alpha}$ if $b_L(n) \geq 2$;
        
    \item\label{it:dist_ub_N} $\dist{n\alpha} < \dist{q_{L-1}\alpha} - \dist{q_{N+2}}$, provided that $L\geq 1$ and $n < q_{N+1}$;
    
    \item\label{it:dist_ub2} $\dist{n\alpha} < \dist{(q_{L-1} + q_L) \alpha}$ if $b_L(n) \leq a_{L+1} - 1$ and $L\geq 1$;

    \item\label{it:dist_compare} $\dist{n\alpha}<\dist{n'\alpha}$ if $k_0(n') = k_0(n) = L$ and $b_L(n)<b_L(n')$;
    
    \item\label{it:dist_specialsmall} $\dist{n\alpha} < \dist{q_L\alpha}$ if and only if 
    the Ostrowski representation of $n$ is of the shape 
    $n = q_L + \sum_{k = L+1 + 2t}^N b_k q_k$ with $t\geq 0$ and $b_{L+1+2t}>0$;

    \item\label{it:dist_notspecial} $\dist{n\alpha} \geq \dist{q_L\alpha} + \dist{q_N \alpha}$ if $q_L < n < q_{N+1}$ and $n$ is not of the shape $n = q_L + \sum_{k = L+1 + 2t}^N b_k q_k$ with $t\geq 0$ and $b_{L+1+2t}>0$.
    
\end{enumerate}
\end{lemma}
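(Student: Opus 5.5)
The natural approach is to work with signed quantities rather than distances directly. Write $n=\sum_{k=L}^N b_k q_k$ with $b_L\ge 1$. Then
\[
n\alpha-\sum_k b_k p_k=\sum_{k=L}^N b_k\delta_k,
\]
and by \eqref{eq:delta_sign} the terms $b_k\delta_k$ alternate in sign with $k$. The basic tool is the following tail estimate: for any valid Ostrowski tail starting at index $j$, the sum $\sum_{k\ge j} b_k\delta_k$ has absolute value strictly less than $\dist{q_{j-1}\alpha}$. More precisely, the part of the tail carrying the sign of $\delta_j$ is bounded by taking the digits $a_{j+1},0,a_{j+3},0,\dots$, which by \eqref{eq:sum-everyother-full-convergents} sums to $\dist{q_{j-1}\alpha}$. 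The Ostrowski rule ($b_{k-1}=0$ whenever $b_k=a_{k+1}$, together with finiteness) makes this bound strict. Equivalently, I will show by induction from the top index down that $\bigl|\sum_{k\ge j} b_k\delta_k\bigr|<\dist{q_{j-1}\alpha}$, and that the sum has the sign of $\delta_j$ whenever $b_j\ge1$. This is the standard argument, and I would prove it first as an auxiliary claim, in a quantitative form that keeps the error $\dist{q_{N+2}\alpha}$ coming from truncation at $N$ (using $n<q_{N+1}$, so $b_k=0$ for $k>N$).

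Given this claim, the items follow in order. For \ref{it:dist_ub}: $n\alpha$ is congruent to $b_L\delta_L+R$, where $R=\sum_{k>L}b_k\delta_k$ has sign opposite to $\delta_L$ (or is $0$) and $|R|<\dist{q_L\alpha}$. Hence $|b_L\delta_L+R|\le b_L\dist{q_L\alpha}$. When $b_L\le a_{L+1}-1$ this is at most $(a_{L+1}-1)\dist{q_L\alpha}<\dist{q_{L-1}\alpha}$, using $\dist{q_{L-1}\alpha}=a_{L+1}\dist{q_L\alpha}+\dist{q_{L+1}\alpha}$. When $b_L=a_{L+1}$ we have $b_{L+1}=0$, which forces $|R|$ to be at least something positive, again giving strict inequality. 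Refining this with the truncation term yields \ref{it:dist_ub_N}. Item \ref{it:dist_ub2} is the same computation with $\dist{(q_{L-1}+q_L)\alpha}=\dist{q_{L-1}\alpha}-\dist{q_L\alpha}$.

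For the lower bounds \ref{it:dist_lb}, \ref{it:dist_lb_N} and \ref{it:dist_lb2}, write $|b_L\delta_L+R|\ge b_L\dist{q_L\alpha}-|R|$. The key quantitative input is that $|R|\le \dist{q_L\alpha}-\dist{q_{L+1}\alpha}-(\text{truncation})$, since the extremal tail $a_{L+2}q_{L+1}+a_{L+4}q_{L+3}+\cdots$ sums to exactly $\dist{q_L\alpha}$ and must be cut off at $N$. This gives \ref{it:dist_lb_N}, hence \ref{it:dist_lb}; for $b_L\ge2$ the extra copy of $\dist{q_L\alpha}$ gives \ref{it:dist_lb2}. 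Item \ref{it:dist_compare} follows because the interval of possible values of $|b_L\delta_L+R|$ for a fixed $b_L$ lies in $\bigl((b_L-1)\dist{q_L\alpha}+\dist{q_{L+1}\alpha},\,b_L\dist{q_L\alpha}\bigr]$. These intervals are disjoint and increasing in $b_L$, and they stay below $1/2$ as long as $b_L\le a_{L+1}$, so that $\dist{\cdot}$ agrees with the absolute value.

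Items \ref{it:dist_specialsmall} and \ref{it:dist_notspecial} are where I expect the main obstacle. By \ref{it:dist_compare}, $\dist{n\alpha}<\dist{q_L\alpha}$ forces $b_L=1$ and $R\neq0$ of the opposite sign, so it reduces to deciding exactly when $R=\sum_{k>L}b_k\delta_k$ has sign opposite to $\delta_L$. By the sign part of the claim, this happens iff the first nonzero index $L'>L$ has $L'-L$ odd, which is exactly the shape $q_L+\sum_{k\ge L+1+2t}b_kq_k$. For \ref{it:dist_notspecial} I then need a gap of size $\dist{q_N\alpha}$. In the non-special case with $b_L=1$, $R$ has the same sign as $\delta_L$ and its first nonzero index is at most $N$, so $|R|\ge \dist{q_{N}\alpha}$ minus lower-order terms. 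I would verify this carefully using \ref{it:dist_lb} applied to $n-q_L$, which gives $|R|>\dist{q_{L'+1}\alpha}$; since the inequality is only needed with $\dist{q_N\alpha}$, a nonstrict estimate should suffice there. The case $b_L\ge2$ is covered by \ref{it:dist_lb2}. The bookkeeping of strict versus non-strict inequalities and of the boundary cases $L'=N$ and $b_L=a_{L+1}$ is the delicate part.
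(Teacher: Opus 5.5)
Your route is the one the paper's proof points to: signed $\delta_k$, the alternating signs \eqref{eq:delta_sign}, the identity \eqref{eq:sum-everyother-full-convergents}, and the digit rules. Your auxiliary tail claim is correct, and with $j=L$ it already \emph{is} (\ref{it:dist_ub}), since $\dist{n\alpha}\le|S_L|$ for $S_L:=\sum_{k\ge L}b_k\delta_k$. The execution, however, contains a sign error that propagates. In (\ref{it:dist_ub}) you assert that $R=\sum_{k>L}b_k\delta_k$ always has sign opposite to $\delta_L$. By your own sign claim, which you use correctly in (\ref{it:dist_specialsmall}), $R$ has the sign of $\delta_{L'}$, where $L'$ is the first nonzero index above $L$. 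For $n=q_L+q_{L+2}$ this gives $|S_L|>b_L\dist{q_L\alpha}$, so your bound $|b_L\delta_L+R|\le b_L\dist{q_L\alpha}$ is false. All estimates must be split by the parity of $L'-L$. When $L'-L$ is even, $0<|R|<\dist{q_{L'-1}\alpha}\le\dist{q_{L+1}\alpha}$, so the correct upper bound is $|S_L|<b_L\dist{q_L\alpha}+\dist{q_{L+1}\alpha}$. For (\ref{it:dist_ub2}) this is exactly the target, since $(a_{L+1}-1)\dist{q_L\alpha}+\dist{q_{L+1}\alpha}=\dist{(q_{L-1}+q_L)\alpha}$, so the same-sign tail cannot be dropped. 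In (\ref{it:dist_compare}) your intervals must become $\bigl((b-1)\dist{q_L\alpha}+\dist{q_{L+1}\alpha},\ b\dist{q_L\alpha}+\dist{q_{L+1}\alpha}\bigr)$; these happen to remain pairwise disjoint. Conversely, your lower-bound input $|R|\le\dist{q_L\alpha}-\dist{q_{L+1}\alpha}-(\text{truncation})$ is valid, and needed, only when $L'-L$ is odd.

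You also apply the digit rule backwards. $b_L=a_{L+1}$ forces $b_{L-1}=0$, not $b_{L+1}=0$: for $L\ge1$ with $a_{L+1}=1$, $a_{L+2}=3$, the digits $b_L=1$, $b_{L+1}=2$ are admissible. The upper tail estimate and the sign claim need no rule at all, only $b_k\le a_{k+1}$ and finiteness. The rule enters exactly once, as $b_L\ge1\Rightarrow b_{L+1}\le a_{L+2}-1$. It is this, and not the truncation at $N$ (which only gives the small $\dist{q_{N+2}\alpha}$-type term), that produces the $-\dist{q_{L+1}\alpha}$ in your lower-bound input. Without it, the inadmissible string $b_L=1$, $b_{L+1}=a_{L+2}$ would give $n=q_{L+2}$, contradicting (\ref{it:dist_lb}). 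Since (\ref{it:dist_lb}), (\ref{it:dist_lb_N}), (\ref{it:dist_lb2}), (\ref{it:dist_compare}) and (\ref{it:dist_specialsmall}) all rest on this, it must be stated.

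Finally, your claim that the values stay below $1/2$ whenever $b_L\le a_{L+1}$ holds for $L\ge1$, since $|S_L|<\dist{q_{L-1}\alpha}\le\alpha$. It fails for $L=0$, where $S_0$ can be close to $1-\alpha$, and there items (\ref{it:dist_lb2}), (\ref{it:dist_compare}), (\ref{it:dist_notspecial}) are in fact false. Take $a_1=4$, so $1/5<\alpha<1/4$:
\begin{itemize}
\item $\dist{2\alpha}=2\alpha>1-3\alpha=\dist{3\alpha}$, violating (\ref{it:dist_compare});
\item $\dist{3\alpha}=\dist{q_0\alpha}+\dist{q_1\alpha}$, violating the strict inequality in (\ref{it:dist_lb2});
\item $\dist{3\alpha}<2\dist{q_0\alpha}$ with $3<q_1$, violating (\ref{it:dist_notspecial}) for $N=0$.
\end{itemize}
These three items should be restricted to $L\ge1$, which is the only way the paper uses them. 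For (\ref{it:dist_lb}), (\ref{it:dist_lb_N}) and (\ref{it:dist_specialsmall}) at $L=0$ you need the extra check $1-S_0>\alpha$. This follows from $0<S_0<(a_1-1)\alpha+\dist{q_1\alpha}=1-\alpha$.
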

\begin{proof}
All statements follow pretty straightforwardly from  \eqref{eq:delta_sign}, \eqref{eq:sum-everyother-full-convergents}, and the rules for the digits of Ostrowski representations. 
See also, for example, \cite[p.~24--25, Lemma~1 and Theorem~1]{RockettSzusz_contfrac}.
\end{proof}

The next lemma follows from similar arguments; see, for example, \cite[Lemma~4.1]{BeresnevichHaynesVelani2020} for a reference.

\begin{lemma}\label{lem:Ostr_direction}
Let $\alpha \in (0,1/2)$ and $n\geq 1$ and assume that $k_0(n)\geq 1$.
Then we have
\[
    \dist{n\alpha}
    = \begin{cases}
        \fract{n\alpha}, &\text{if $k_0(n)$ is even};\\
        1 - \fract{n\alpha}, &\text{if $k_0(n)$ is odd}.
    \end{cases}
\]
\end{lemma}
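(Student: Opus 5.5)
We must prove Lemma~\ref{lem:Ostr_direction}: for $k_0(n)=L\geq 1$, the sign of $n\alpha-\text{(nearest integer)}$ is $(-1)^L$, and moreover $\dist{n\alpha}<1/2$ so the fractional part is determined by the sign. The plan is to write $n=\sum_{k=L}^N b_k q_k$ and consider the real number
\[
    \Delta(n) := \sum_{k=L}^N b_k \delta_k = n\alpha - \sum_{k=L}^N b_k p_k ,
\]
which differs from $n\alpha$ by an integer. If we show that $\Delta(n)$ has sign $(-1)^L$ and $\abs{\Delta(n)}<1/2$, then $\Delta(n)$ is the signed distance of $n\alpha$ to the nearest integer. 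Hence $\fract{n\alpha}=\Delta(n)=\dist{n\alpha}$ for $L$ even, and $\fract{n\alpha}=1+\Delta(n)=1-\dist{n\alpha}$ for $L$ odd.

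By \eqref{eq:delta_sign}, the terms $b_k\delta_k$ with $k\equiv L \pmod 2$ have sign $(-1)^L$, and the others have the opposite sign. The key step is the bound
\[
    \Bigl|\sum_{k=L+1}^N b_k\delta_k\Bigr| < \abs{\delta_L}.
\]
This will be proved by induction from the top digit down, as in \cite[p.~24--25]{RockettSzusz_contfrac}, using the digit rules $b_k\leq a_{k+1}$ together with the condition $b_{k-1}=0$ whenever $b_k=a_{k+1}$. The relation driving the induction is $\abs{\delta_{k-1}} = a_{k+1}\abs{\delta_k} + \abs{\delta_{k+1}}$, which is a restatement of \eqref{eq:conv_recursion} in the form underlying \eqref{eq:sum-everyother-full-convergents}.

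Concretely, we show by downward induction on $j$ that the tail $S_j := \sum_{k=j}^N b_k\delta_k$ satisfies $\abs{S_j}<\abs{\delta_{j-1}}$, and that $S_j$ has the sign of $\delta_j$ unless $b_j=0$, in which case $\abs{S_j}<\abs{\delta_j}$. For the inductive step, adding $b_j\delta_j$ to $S_{j+1}$ with $\abs{S_{j+1}}<\abs{\delta_j}$ gives:
\begin{itemize}
    \item if $b_j\leq a_{j+1}-1$, then $\abs{S_j}<(b_j+1)\abs{\delta_j}\leq a_{j+1}\abs{\delta_j}<\abs{\delta_{j-1}}$;
    \item if $b_j=a_{j+1}$, then $b_{j+1}<a_{j+2}$ need not hold, but the rule forces $b_{j-1}=0$, and we need the finer estimate that $S_{j+1}$ either has the sign of $\delta_{j+1}$ (opposite to $\delta_j$) or has absolute value less than $\abs{\delta_{j+1}}$. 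Both options give $\abs{S_j}< a_{j+1}\abs{\delta_j}+\abs{\delta_{j+1}}=\abs{\delta_{j-1}}$.
\end{itemize}
Handling this second case cleanly is the main obstacle: we have to carry the sign information in the induction hypothesis and check that the sign of $S_j$ is that of $\delta_j$ whenever $b_j\geq1$. This follows because $b_j\abs{\delta_j}\geq\abs{\delta_j}>\abs{S_{j+1}}$.

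Applying the induction at $j=L$, where $b_L\geq1$, we obtain that $\Delta(n)=S_L$ has the sign of $\delta_L$, namely $(-1)^L$, and that $\abs{\Delta(n)}<\abs{\delta_{L-1}}$. Since $L\geq1$, \eqref{eq:qkalpha_easybound} gives $\abs{\delta_{L-1}}\leq 1/q_L\leq 1/a_1<1/2$, because $\alpha<1/2$ forces $a_1\geq2$. This completes the argument.
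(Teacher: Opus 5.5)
Your proof is correct and is essentially the argument the paper has in mind. The paper gives no details: it points to \eqref{eq:delta_sign}, the telescoping relation behind \eqref{eq:sum-everyother-full-convergents} and \cite[Lemma~4.1]{BeresnevichHaynesVelani2020}, and your downward induction is the one-step version of that identity (applying it directly to the same-parity and opposite-parity terms of $\sum_k b_k\delta_k$ gives $0<(-1)^L\Delta(n)<\abs{\delta_{L-1}}$ at once). Two cosmetic points: the condition $b_{k-1}=0$ when $b_k=a_{k+1}$ is never actually used, since only $0\le b_k\le a_{k+1}$ is needed; and $1/a_1<1/2$ should read $1/a_1\le 1/2$, which is harmless because $\abs{\Delta(n)}<\abs{\delta_{L-1}}$ is already strict.
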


Lemma~\ref{lem:Ostr_direction} cannot be extended to $k_0(n)\geq 0$, so when the smallest allowed digit in the representation of $n$ shows up, then $\fract{n\alpha}$ might lie on ``the wrong side of $1/2$''. Indeed, if $k_0(n)=0$, we might have $\fract{n\alpha} >1/2$ if $b_0(n)$ is large, even though from the parity of $k_0(n)$ we would expect $\fract{n\alpha}<1/2$.
The next lemma will be useful when dealing with such exceptional cases.

\begin{lemma}\label{lem:q0-wrongside-improve}
Assume $\alpha<1/2$ and $k_0(n) = 0$.
Then 
\[  
    \fract{n\alpha}>1/2 
    \quad \implies \quad
    \fract{n\alpha} < \fract{(n+1)\alpha}.
\]
Note that the implied statement is equivalent to $\dist{(n+1)\alpha} < \dist{n\alpha}$.
\end{lemma}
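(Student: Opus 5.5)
The plan is a direct computation: split off the digit $b_0$, and use Lemma~\ref{lem:Ostr_direction} together with Lemma~\ref{lem:Ost_largedigits} on the remaining part. In fact I expect to prove the stronger inequality $\fract{n\alpha}+\alpha<1$ whenever $k_0(n)=0$. This immediately gives $\fract{(n+1)\alpha}=\fract{n\alpha}+\alpha>\fract{n\alpha}$, so the hypothesis $\fract{n\alpha}>1/2$ is not really needed.

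\textbf{Setup.} Write $n=b_0+n'$ with $1\leq b_0\leq a_1-1$ and $n'=\sum_{k\geq 1}b_kq_k$. Then either $n'=0$ or $L:=k_0(n')\geq 1$.

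\textbf{The basic inequality.} Since $\alpha=[0;a_1,a_2,\ldots]<1/a_1$, we have $a_1\alpha<1$. Hence $(b_0+1)\alpha\leq a_1\alpha<1$, and more precisely
\[
1-a_1\alpha=\dist{q_1\alpha},
\]
because $q_1=a_1$ and $p_1=1$. If $n'=0$, the claim is immediate: $\fract{n\alpha}+\alpha=(b_0+1)\alpha<1$.

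\textbf{Case $L$ odd.} By Lemma~\ref{lem:Ostr_direction}, $n'\alpha\equiv-\dist{n'\alpha}$ modulo $1$. By Lemma~\ref{lem:Ost_largedigits}\,(2), $\dist{n'\alpha}<\dist{q_{L-1}\alpha}\leq\dist{q_0\alpha}=\alpha\leq b_0\alpha$. Therefore $b_0\alpha-\dist{n'\alpha}\in(0,1)$, so
\[
\fract{n\alpha}=b_0\alpha-\dist{n'\alpha}
\quad\text{and}\quad
\fract{n\alpha}+\alpha<(b_0+1)\alpha<1 .
\]

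\textbf{Case $L$ even, so $L\geq 2$.} Now $n'\alpha\equiv+\dist{n'\alpha}$, and Lemma~\ref{lem:Ost_largedigits}\,(2) gives $\dist{n'\alpha}<\dist{q_{L-1}\alpha}\leq\dist{q_1\alpha}=1-a_1\alpha$. Hence
\[
(b_0+1)\alpha+\dist{n'\alpha}<a_1\alpha+(1-a_1\alpha)=1 .
\]
This shows at once that $\fract{n\alpha}=b_0\alpha+\dist{n'\alpha}$ and that $\fract{n\alpha}+\alpha<1$.

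\textbf{Where the difficulty lies.} The only delicate point is the even case with $b_0=a_1-1$. There the room left is exactly $\dist{q_1\alpha}$, and it is precisely the parity statement of Lemma~\ref{lem:Ostr_direction} that forces $L\geq 2$, which in turn gives the bound by $\dist{q_1\alpha}$ rather than only by $\dist{q_0\alpha}$. I would double-check the monotonicity $\dist{q_{L-1}\alpha}\leq\dist{q_1\alpha}$ for $L\geq 2$ via \eqref{eq:best_approx_prop}.

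The final equivalence with $\dist{(n+1)\alpha}<\dist{n\alpha}$ under $\fract{n\alpha}>1/2$ is clear. Both fractional parts lie in $(1/2,1)$, where $\dist{x}=1-\fract{x}$ is decreasing, so moving from $\fract{n\alpha}$ to the larger value $\fract{(n+1)\alpha}$ decreases the distance to the nearest integer.
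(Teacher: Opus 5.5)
Your proof is correct and is essentially the paper's argument. You split off $b_0\le a_1-1$, show that the tail $n'$ can shift $n\alpha$ upward by less than $\dist{q_1\alpha}=1-a_1\alpha$, and conclude $\fract{n\alpha}<1-\alpha$ without ever using the hypothesis $\fract{n\alpha}>1/2$.

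The only difference is how the tail is controlled. You use Lemmas~\ref{lem:Ostr_direction} and \ref{lem:Ost_largedigits}(\ref{it:dist_ub}) with a parity split, while the paper works directly from the signs of $\delta_k$ and \eqref{eq:sum-everyother-full-convergents}. Your version treats the case $k_0(n')$ odd explicitly. There $\fract{n'\alpha}=1-\dist{n'\alpha}>1/2$, so the paper's chain $\fract{n\alpha}\le\fract{b_0\alpha}+\fract{n'\alpha}<\dots$ only makes sense if read as a bound on the signed deviation $\sum_{k\geq 1}b_k\delta_k$.
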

\begin{proof}
If $n = \sum_{k = 0}^N b_k q_k$ with $b_0>0$,
then we get from \eqref{eq:delta_sign} and \eqref{eq:sum-everyother-full-convergents} that
\begin{align*}
    \fract{n\alpha}
    &\leq \fract{b_0 q_0 \alpha} + \fract{(b_1 q_1 + \dots)\alpha}\\
    &< \fract{b_0 q_0 \alpha} + \fract{(a_2 q_2 + a_4 q_4 + \dots)\alpha}\\
    &\leq \fract{(q_1 - 1) \alpha} + \fract{-q_1 \alpha}
    = 1 - \fract{\alpha}.
\end{align*}
Thus, $\fract{n\alpha} + \fract{\alpha}<1$ and so $ \fract{(n+1)\alpha} = \fract{n\alpha} + \fract{\alpha} > \fract{n\alpha}$, as desired.
\end{proof}

\subsection{One-sided approximations}

For real $\xi$, let us define the distance to the nearest integer on the left, and the distance to the nearest integer on the right:
\begin{equation*}
    \absleft{\xi} := \fract{\xi}
    \quad \text{and} \quad
    \absright{\xi} := \fract{-\xi}.
\end{equation*}
    
The overload of notation is justified by the fact that 
we will go on to focus on one of $\absleft{\cdot}$, $\absright{\cdot}$ at a time; see also the lemma below.
Moreover, in Section~\ref{sec:proof} we will consider the functions $\fleft, \fright, \xleft, \xright$, and the definition above will be compatible with those functions.
Note that if $\{\absside{\cdot}, \abssideother{\cdot}\} = \{\absleft{\cdot}, \absright{\cdot}\}$, we have
\[
    \absside{-\xi} = 1 - \absside{\xi} = \abssideother{\xi},
    \quad \text{for all real non-integers } \xi.
\]
Moreover, we have the same ``addition rules'' as for $\fract{\cdot}$:
\begin{align*}
    \absside{\xi_1} + \absside{\xi_2} < 1
        \quad \iff \quad
        \absside{\xi_1 + \xi_2} = \absside{\xi_1} + \absside{\xi_2};\\
    \absside{\xi_1} \geq \absside{\xi_2} 
    \quad \iff \quad
    \absside{\xi_1 - \xi_2} = \absside{\xi_1} - \absside{\xi_2}.
\end{align*}  
We will also use the fact that
\[
    \absside{\xi}<\absside{\xi + \xi_1} < \absside{\xi + \xi_2} 
    \quad \implies \quad
    \absside{\xi_1}<\absside{\xi_2}.
\]

We start with a simple lemma for the situation where $\absside{n\alpha}>1/2$.

\begin{lemma}\label{lem:absside_larger12}
Let $\absside{\cdot} \in \{\absleft{\cdot}, \absright{\cdot}\}$.
If $\absside{n\alpha} > 1/2$, then either $\absside{(n-1)\alpha}< \absside{n\alpha}$ or $\absside{(n+1)\alpha}<\absside{n\alpha}$.
\end{lemma}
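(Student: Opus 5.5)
The plan is to use the addition rules for $\absside{\cdot}$ directly, with $\xi_1 = \pm\alpha$. First reduce to the case $\absside{\cdot} = \absleft{\cdot}$, i.e. $\absside{\xi} = \fract{\xi}$.

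\textbf{The reduction.} If $\absside{\cdot} = \absright{\cdot}$, then $\absright{n\alpha} = \fract{-n\alpha}$. We could phrase the statement in terms of $\fract{\cdot}$ applied to $-n\alpha$, but that just moves the problem to the sequence $-n\alpha$. It is cleaner to argue uniformly for an arbitrary $\absside{\cdot}$, using only the facts listed just before the lemma.

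\textbf{Set-up.} Put $x := \absside{n\alpha} > 1/2$. Note that $n\alpha$ is not an integer for $n \neq 0$. If $n = 0$, then $\absside{0} = 0$ and there is nothing to prove.

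Let $a := \absside{\alpha}$. Then $\absside{-\alpha} = 1 - a$, since $\alpha$ is a non-integer. Exactly one of $a$ and $1-a$ is at most $1/2$; indeed both are strictly less than $1/2$ or strictly greater, up to the single value $1/2$, which is impossible as $\alpha$ is irrational. Write $\absside{\pm\alpha}$ for whichever of $\absside{\alpha}$, $\absside{-\alpha}$ is less than $1/2$.

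\textbf{Key step.} Since both $x$ and $\absside{\pm\alpha}$ exceed their complements in the sense that
\[
    \absside{\mp\alpha} = 1 - \absside{\pm\alpha} > 1/2,
\]
we compare $\absside{n\alpha}$ with $\absside{\mp\alpha}$. There are two cases.

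\caseI{1} $x \geq \absside{\mp\alpha}$. The subtraction rule gives
\[
    \absside{n\alpha - (\mp\alpha)} = x - \absside{\mp\alpha} < x,
\]
because $\absside{\mp\alpha} > 0$. So $\absside{(n\pm1)\alpha} < \absside{n\alpha}$.

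\caseI{2} $x < \absside{\mp\alpha}$. Use the addition rule with $\absside{\pm\alpha}$ instead. Then $x + \absside{\pm\alpha} \geq 1$, since $x > 1/2$... but this alone does not yet give the bound, so it is better to argue directly. For a non-integer $\xi$ and $\absside{\eta} + \absside{\xi} \geq 1$, we have
\[
    \absside{\xi + \eta} = \absside{\xi} + \absside{\eta} - 1,
\]
because $\absside{\cdot}$ is $\fract{\cdot}$ up to a sign of the argument. Apply this with $\xi = n\alpha$ and $\eta = \mp\alpha$; we have $x + \absside{\mp\alpha} > 1/2 + 1/2 = 1$. Then
\[
    \absside{(n\mp1)\alpha} = x + \absside{\mp\alpha} - 1 < x,
\]
since $\absside{\mp\alpha} < 1$.

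\textbf{Simplification.} In fact Case 2 alone suffices, with no case split. Take $\eta \in \{\alpha, -\alpha\}$ with $\absside{\eta} > 1/2$; such an $\eta$ exists by the complementarity above. Then $x + \absside{\eta} > 1$, so
\[
    \absside{n\alpha + \eta} = x + \absside{\eta} - 1 < x.
\]
This gives either $\absside{(n+1)\alpha} < \absside{n\alpha}$ or $\absside{(n-1)\alpha} < \absside{n\alpha}$, as required.

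\textbf{Main point to check.} The only step needing care is the wrap-around identity $\absside{\xi + \eta} = \absside{\xi} + \absside{\eta} - 1$ when $\absside{\xi} + \absside{\eta} \geq 1$. I would verify it from $\fract{u + v} = \fract{u} + \fract{v} - 1$ when $\fract{u} + \fract{v} \geq 1$, applied to $u = \pm\xi$ and $v = \pm\eta$ with the sign dictated by the choice of $\absside{\cdot}$. I would also confirm strictness, which holds since $\absside{\eta} < 1$.
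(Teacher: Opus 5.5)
Your final ``Simplification'' argument is correct and is essentially the paper's proof. The paper picks the sign with $\absside{\pm\alpha}<1/2<\absside{n\alpha}$ and uses the subtraction rule to get $\absside{(n\mp1)\alpha}=\absside{n\alpha}-\absside{\pm\alpha}$, which is your wrap-around identity with $\eta=\mp\alpha$, since $\absside{\eta}-1=-\absside{-\eta}$; the earlier case split can be dropped, and the sentence saying $a$ and $1-a$ are ``both strictly less than $1/2$ or strictly greater'' should instead say that exactly one of them is below $1/2$.
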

\begin{proof}
Since either $\absside{\alpha}<1/2$ or $\absside{-\alpha} <1/2$, we have that $\absside{\pm \alpha}< 1/2 < \absside{n\alpha}$ for one choice of $\pm$.
Then $\absside{(n\mp 1)\alpha} = \absside{n\alpha} - \absside{\pm \alpha} < \absside{n\alpha}$.
\end{proof}

Below, we will focus on one of $\absleft{\cdot}$, $\absright{\cdot}$ at a time. In fact, we will usually fix an integer $n$ and set
\begin{equation}\label{eq:def_absside}
    \absside{\cdot} := \begin{cases}
    	\absleft{\cdot}, 
			&\text{if } \dist{n \alpha}= \fract{n\alpha};\\
		\absright{\cdot}, 
			&\text{if } \dist{n\alpha}= 1 - \fract{n\alpha}.    
    \end{cases}
\end{equation}
In other words, we choose $\absside{\cdot}$ such that $\absside{n\alpha} = \dist{n \alpha}<1/2$.

We rephrase Lemma~\ref{lem:Ostr_direction} in terms of $\absside{\cdot}$:

\begin{lemma}\label{lem:absside_parity}
Let $\alpha \in (0,1/2)$ and $n,n'\geq 1$ with $k_0(n), k_0(n')\geq 1$.
Set $\absside{\cdot}$ as in \eqref{eq:def_absside}. 
Then
\[
    \absside{n'\alpha}<1/2
    \quad \iff \quad
    k_0(n)\equiv k_0(n') \pmod{2}.
\]
\end{lemma}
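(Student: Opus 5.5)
The plan is to reduce the statement to Lemma~\ref{lem:Ostr_direction}, applied to both $n$ and $n'$. Both integers satisfy $k_0(\cdot)\geq 1$, so that lemma tells us on which side of $1/2$ each of $\fract{n\alpha}$ and $\fract{n'\alpha}$ lies. The choice of $\absside{\cdot}$ in \eqref{eq:def_absside} is then encoded purely by the parity of $k_0(n)$, and the rest is a short case check.

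First, we determine $\absside{\cdot}$ from $k_0(n)$. By Lemma~\ref{lem:Ostr_direction}:
\begin{itemize}
\item If $k_0(n)$ is even, then $\dist{n\alpha}=\fract{n\alpha}$, so $\absside{\cdot}=\absleft{\cdot}$.
\item If $k_0(n)$ is odd, then $\dist{n\alpha}=1-\fract{n\alpha}$, so $\absside{\cdot}=\absright{\cdot}$.
\end{itemize}
If $\fract{n\alpha}=1/2$ both definitions coincide, but this cannot happen since $\alpha$ is irrational. So the case distinction is unambiguous.

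Second, we locate $\fract{n'\alpha}$ by applying Lemma~\ref{lem:Ostr_direction} to $n'$.
\begin{itemize}
\item If $k_0(n')$ is even, then $\fract{n'\alpha}=\dist{n'\alpha}<1/2$.
\item If $k_0(n')$ is odd, then $\fract{n'\alpha}=1-\dist{n'\alpha}>1/2$.
\end{itemize}
Both inequalities are strict, because $\dist{n'\alpha}<1/2$ for irrational $\alpha$ and $n'\neq 0$. Equivalently, $\absright{n'\alpha}=\fract{-n'\alpha}=1-\fract{n'\alpha}$ is $<1/2$ exactly when $k_0(n')$ is odd.

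Third, we combine the two steps into four cases.
\begin{itemize}
\item $k_0(n)$ even: then $\absside{n'\alpha}=\fract{n'\alpha}$, which is $<1/2$ if and only if $k_0(n')$ is even.
\item $k_0(n)$ odd: then $\absside{n'\alpha}=1-\fract{n'\alpha}$, which is $<1/2$ if and only if $k_0(n')$ is odd.
\end{itemize}
In both cases, $\absside{n'\alpha}<1/2$ holds exactly when $k_0(n)\equiv k_0(n')\pmod 2$, which is the claim.

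There is no real obstacle here, since all the substance is contained in Lemma~\ref{lem:Ostr_direction}. The only point needing care is the strictness of the inequalities. It follows from $\fract{n'\alpha}\neq 1/2$ for irrational $\alpha$, which holds because $2n'\alpha\notin\Z$.
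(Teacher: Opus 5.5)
Your proposal is correct and takes the same approach as the paper. The paper gives no separate proof and presents the lemma as a direct rephrasing of Lemma~\ref{lem:Ostr_direction}; your argument applies that lemma to both $n$ and $n'$ and does the parity case check, with strictness coming from the irrationality of $\alpha$.
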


The next two lemmas describe how we can compare $\absside{n\alpha}$ and $\absside{n'\alpha}$ by looking at the first nonzero digits in $n,n'$.

\begin{lemma}\label{lem:dist_smallestConv}
Let $\alpha \in (0,1/2)$ and $n \geq 1$ and set $\absside{\cdot}$ as in \eqref{eq:def_absside}.
Then for all $n'\geq 1$ we have
\[
    k_0(n') < k_0(n)
    \quad \implies \quad
    \absside{n \alpha} < \absside{n' \alpha}.
\]
\end{lemma}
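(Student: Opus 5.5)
The plan is a case split on the gap between $L := k_0(n)$ and $L' := k_0(n')$, where $L' < L$ and therefore $L \geq 1$. Two facts are used throughout. First, $\absside{n\alpha} = \dist{n\alpha}$ by the choice \eqref{eq:def_absside}. Second, $\absside{\xi} \geq \dist{\xi}$ for every real non-integer $\xi$, since $\dist{\xi} = \min(\absleft{\xi},\absright{\xi})$. Since $L\geq 1$, Lemma~\ref{lem:Ost_largedigits}\,\ref{it:dist_ub} gives the upper bound
\[
    \absside{n\alpha} = \dist{n\alpha} < \dist{q_{L-1}\alpha}.
\]
It then remains to bound $\absside{n'\alpha}$ from below.

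\textbf{Case $L' \leq L-2$.} By Lemma~\ref{lem:Ost_largedigits}\,\ref{it:dist_lb} and the fact that $\dist{q_k\alpha}$ is non-increasing in $k$ (from \eqref{eq:sum-everyother-full-convergents}, or the best approximation property \eqref{eq:best_approx_prop}), we get
\[
    \absside{n'\alpha} \geq \dist{n'\alpha} > \dist{q_{L'+1}\alpha} \geq \dist{q_{L-1}\alpha} > \absside{n\alpha}.
\]

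\textbf{Case $L' = L-1$.} The previous chain breaks here, because it only yields $\dist{n'\alpha} > \dist{q_L\alpha}$, and $\dist{n\alpha}$ may exceed $\dist{q_L\alpha}$, for instance when $b_L(n)\geq 2$. Instead I use a parity argument: $k_0(n)$ and $k_0(n')$ now have different parities, so $n'\alpha$ lies on the ``wrong side''.

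If $L' \geq 1$, Lemma~\ref{lem:absside_parity} gives $\absside{n'\alpha} > 1/2 > \absside{n\alpha}$, where the last inequality holds by the choice of $\absside{\cdot}$.

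If $L' = 0$, then $L = 1$, which is odd. By Lemma~\ref{lem:Ostr_direction}, $\dist{n\alpha} = 1-\fract{n\alpha}$, so $\absside{\cdot} = \absright{\cdot}$ and $\absside{n'\alpha} = 1-\fract{n'\alpha}$. The computation in the proof of Lemma~\ref{lem:q0-wrongside-improve} shows that $\fract{n'\alpha} < 1 - \fract{\alpha}$ whenever $k_0(n') = 0$. That computation does not use the hypothesis $\fract{n'\alpha}>1/2$. Hence
\[
    \absside{n'\alpha} > \alpha = \dist{q_0\alpha} > \dist{n\alpha} = \absside{n\alpha},
\]
where the last strict inequality is again Lemma~\ref{lem:Ost_largedigits}\,\ref{it:dist_ub} with $L=1$.

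The main obstacle is this edge case $L' = L-1$, and especially the subcase $L'=0$. There Lemma~\ref{lem:absside_parity} is unavailable, and one must check that the bound $\fract{n'\alpha} < 1-\alpha$ from Lemma~\ref{lem:q0-wrongside-improve} holds unconditionally.
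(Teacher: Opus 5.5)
Your proof is correct and follows the paper's argument, with the same case split:
\begin{itemize}
\item $k_0(n')\le k_0(n)-2$, handled by Lemma~\ref{lem:Ost_largedigits}(\ref{it:dist_lb}, \ref{it:dist_ub});
\item $k_0(n')=k_0(n)-1\ge 1$, handled by the parity Lemma~\ref{lem:absside_parity};
\item $k_0(n)=1$, $k_0(n')=0$, handled by the bound from the proof of Lemma~\ref{lem:q0-wrongside-improve}.
\end{itemize}
The only difference is minor: you note that $\fract{n'\alpha}<1-\fract{\alpha}$ holds whenever $k_0(n')=0$, so you do not need the paper's separate subcase $\fract{n'\alpha}<1/2$.
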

\begin{proof}
If $k_0(n) = 0$ , the implication is trivial.

Assume next that $k_0(n)\geq 2$.
Then if $k_0(n') = k_0(n) - 1$, we have from Lemma~\ref{lem:absside_parity} that $\absside{n'\alpha}>1/2>\absside{n\alpha}$. If $k_0(n')\leq k_0(n)-2$, then we obtain $\absside{n'\alpha}>\absside{n\alpha}$ from Lemma~\ref{lem:Ost_largedigits}(\ref{it:dist_lb}, \ref{it:dist_ub}).

We are left with the case $k_0(n) = 1$.
Lemma~\ref{lem:Ostr_direction} implies that $\absside{\cdot} = \absright{\cdot}$.
Assume $k_0(n') = 0$.
If $\fract{n'\alpha}<1/2$, then $\absright{n'\alpha}>1/2>\absright{n\alpha}$, as desired.
If $\fract{n'\alpha} > 1/2$, then as in the proof of Lemma~\ref{lem:q0-wrongside-improve} we see that $\fract{n'\alpha}< 1- \fract{\alpha}$, and so $\absright{n'\alpha}> \dist{\alpha} = \dist{q_0\alpha} > \dist{n\alpha} = \absright{n\alpha}$, where we used Lemma~\ref{lem:Ost_largedigits}(\ref{it:dist_ub}).
\end{proof}

\begin{lemma}\label{lem:dist_smallestDig}
Let $\alpha \in (0,1/2)$ and $n,n'\geq 1$ with $k_0(n) = k_0(n') = L$.
Assume $L \geq 1$ and set $\absside{\cdot}$ as in \eqref{eq:def_absside}.
Then 
\[
    b_L(n) < b_L(n')
    \quad \implies \quad
    \absside{n \alpha} < \absside{n' \alpha}.
\]
\end{lemma}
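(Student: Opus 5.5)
Since $k_0(n) = k_0(n') = L \geq 1$, Lemma~\ref{lem:absside_parity} gives $\absside{n'\alpha} < 1/2$, because the parities agree. Lemma~\ref{lem:Ostr_direction} shows more precisely that $\absside{\cdot}$ is the same one-sided distance for $n$ and $n'$: it is $\absleft{\cdot}$ if $L$ is even and $\absright{\cdot}$ if $L$ is odd. So $\absside{n\alpha} = \dist{n\alpha}$ and $\absside{n'\alpha} = \dist{n'\alpha}$. The claim then reduces to $\dist{n\alpha} < \dist{n'\alpha}$, which is exactly Lemma~\ref{lem:Ost_largedigits}(\ref{it:dist_compare}). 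That is the whole proof if we take that item as established.

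To make the argument self-contained, I would prove the comparison directly as follows. Write $n = b q_L + r$ and $n' = b' q_L + r'$ with $b = b_L(n) < b' = b_L(n')$, where $r, r'$ consist only of Ostrowski digits of index at least $L+1$. By \eqref{eq:delta_sign} and Lemma~\ref{lem:Ostr_direction}, the signed deviation of $n\alpha$ from the nearest integer is $b\,\delta_L + \sum_{k>L} b_k \delta_k$. The signs alternate with $k$, and the tail sums are controlled by \eqref{eq:sum-everyother-full-convergents}. This gives
\[
    (b-1)\dist{q_L\alpha} + \dist{q_{L+1}\alpha} < \dist{n\alpha} < b\,\dist{q_L\alpha} + \text{(contribution of digits of index } L+2, L+4, \ldots).
\]
We then need $\dist{n\alpha} < b\dist{q_L\alpha}$ for the upper bound and $\dist{n'\alpha} > b\dist{q_L\alpha}$ for the lower bound. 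The lower bound uses $b' \geq b+1$ together with the fact that the digits of index $L+1$, $L+3$, $\ldots$ reduce the deviation by strictly less than $\dist{q_L\alpha}$, by \eqref{eq:sum-everyother-full-convergents}. The upper bound needs care, because the even-offset digits increase the deviation. Here the Ostrowski rule is essential: if $b_{L+1} = a_{L+2}$ then $b = 0$, which is excluded since $b_L(n) \geq 1$.

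The main obstacle is this upper estimate $\dist{n\alpha} < b\dist{q_L\alpha}$. The digits at indices $L+2, L+4, \ldots$ add to the deviation, while those at $L+1, L+3, \ldots$ subtract from it. I would handle this by grouping pairs of consecutive digits and using the rule $b_{k-1} = 0$ whenever $b_k = a_{k+1}$, which shows that the net tail contribution lies strictly in $(-\dist{q_L\alpha}, 0)$. This mirrors the standard argument in \cite{RockettSzusz_contfrac}. Since Lemma~\ref{lem:Ost_largedigits} is already available, however, I would simply cite item (\ref{it:dist_compare}) combined with the side identification above.
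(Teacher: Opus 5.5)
Your main argument is correct and is exactly the paper's proof: since $k_0(n)=k_0(n')=L\ge 1$, Lemma~\ref{lem:absside_parity} gives $\absside{n\alpha}=\dist{n\alpha}$ and $\absside{n'\alpha}=\dist{n'\alpha}$. The claim is then Lemma~\ref{lem:Ost_largedigits}(\ref{it:dist_compare}).

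Your optional self-contained sketch, however, aims at a false intermediate estimate. The bound $\dist{n\alpha}<b\,\dist{q_L\alpha}$ fails already for $n=q_L+q_{L+2}$. This is a valid representation with $b=1$, and there $\dist{n\alpha}=\dist{q_L\alpha}+\dist{q_{L+2}\alpha}>\dist{q_L\alpha}$. Correspondingly, the net tail contribution is not confined to $(-\dist{q_L\alpha},0)$, because even-offset digits can make it positive. So the range you plan to establish by grouping pairs of digits is simply false.

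The right separating value is $b\,\dist{q_L\alpha}+\dist{q_{L+1}\alpha}$. On the $n$ side, the digits of index $L+2,L+4,\ldots$ add strictly less than $\dist{q_{L+1}\alpha}$, by \eqref{eq:sum-everyother-full-convergents} with $L+1$ in place of $L$. Hence $\dist{n\alpha}<b\,\dist{q_L\alpha}+\dist{q_{L+1}\alpha}$. On the $n'$ side, your displayed lower bound gives $\dist{n'\alpha}>(b'-1)\dist{q_L\alpha}+\dist{q_{L+1}\alpha}\ge b\,\dist{q_L\alpha}+\dist{q_{L+1}\alpha}$.

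Note that the Ostrowski rule ($b_{L+1}\le a_{L+2}-1$ because $b_L\ge 1$) is what that lower bound on $n'$ needs. It is not needed for the upper estimate on $n$, which uses only the digit bounds $b_k\le a_{k+1}$.
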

\begin{proof}
By our assumptions and by Lemma~\ref{lem:absside_parity}, we have $\absside{n\alpha} = \dist{n\alpha}$ and $\absside{n'\alpha} = \dist{n'\alpha}$.
Thus, the inequality is the statement (\ref{it:dist_compare}) in Lemma~\ref{lem:Ost_largedigits}.
\end{proof}

Now we state the best approximation property of (semi-)convergents in the one-sided setting.

\begin{lemma}\label{lem:semiconv-best-approx}
Let $\alpha \in (0,1/2)$ and $L\geq 0$.
Let $\absside{\cdot} \in \{\absleft{\cdot}, \absright{\cdot}\}$ be such that $\absside{q_L\alpha} < 1/2$.
Then 
\begin{equation}\label{eq:semiconv_ineq}
    \absside{q_L\alpha}
    >\absside{(q_L + q_{L+1}) \alpha} 
    > \dots 
    > \absside{(q_L + (a_{L+2} - 1)q_{L+1}) \alpha}
    > \absside{q_{L+2} \alpha}.
\end{equation}
Moreover, for $0\leq a \leq a_{L+2}-1$ we have 
\begin{equation}\label{eq:semiconv-best-approx}
    0 < q < q_L + (a+1) q_{L+1} 
    \quad \implies \quad 
    \absside{q\alpha} \geq \absside{(q_L + a q_{L+1})\alpha }.
\end{equation}
\end{lemma}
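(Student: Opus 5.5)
We first settle the sign conventions and then do an explicit computation for the chain \eqref{eq:semiconv_ineq}. Since $\absside{q_L\alpha}<1/2$ and $\dist{q_L\alpha}<1/2$, we have $\absside{q_L\alpha}=\dist{q_L\alpha}=\abs{\delta_L}$. By \eqref{eq:delta_sign} the numbers $\delta_L$ and $\delta_{L+1}$ have opposite signs. Hence $\absside{q_{L+1}\alpha}=1-\abs{\delta_{L+1}}$, that is, $\absside{-q_{L+1}\alpha}=\abs{\delta_{L+1}}$. For $0\le a\le a_{L+2}$ put
\[
\eta_a:=\abs{\delta_L}-a\abs{\delta_{L+1}} .
\]
Using the recursion \eqref{eq:conv_recursion} we have $\delta_{L+2}=\delta_L+a_{L+2}\delta_{L+1}$, so $\eta_{a_{L+2}}=\abs{\delta_{L+2}}>0$. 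The sequence $\eta_a$ is strictly decreasing and all its terms lie in $(0,1/2)$. Since $(q_L+aq_{L+1})\alpha\equiv \pm\eta_a$ modulo $1$ with the same sign as $q_L\alpha$, the addition rules for $\absside{\cdot}$ (subtract $a$ times $\absside{-q_{L+1}\alpha}$, never crossing $0$) give $\absside{(q_L+aq_{L+1})\alpha}=\eta_a$. This yields the whole chain \eqref{eq:semiconv_ineq}, and the last term is $\absside{q_{L+2}\alpha}=\abs{\delta_{L+2}}$.

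For \eqref{eq:semiconv-best-approx} we argue by contradiction. Let $Q=q_L+aq_{L+1}$, and suppose $0<q<Q+q_{L+1}$ with $\absside{q\alpha}<\eta_a$. The standard lattice argument runs as follows. The integer vectors $(q_L,p_L)$ and $(q_{L+1},p_{L+1})$ form a basis of $\Z^2$, because $q_{L+1}p_L-q_Lp_{L+1}=\pm1$. So we can write $q=uq_L+vq_{L+1}$ and let $p$ be the matching numerator, so that $q\alpha-p=u\delta_L+v\delta_{L+1}$ with $u,v\in\Z$. The one-sided condition says that $u\delta_L+v\delta_{L+1}$ has the sign of $\delta_L$ (or is an integer shift of such a value) and absolute value less than $\eta_a$. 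We split into cases according to the signs of $u$ and $v$.

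If $u\le0$ and $v\le 0$, then $q\le0$, which is impossible. If $u$ and $v$ have opposite signs with $u<0<v$, then $u\delta_L$ and $v\delta_{L+1}$ have the same sign, namely that of $\delta_{L+1}$. Their sum then lies on the wrong side of $0$, and its size is at least $\abs{\delta_{L+1}}$, so its one-sided distance is large. If $u\ge1$ and $v\le 0$, then $q\alpha-p=u\abs{\delta_L}\pm\abs{v}\abs{\delta_{L+1}}$ on the correct side. Being below $\eta_a$ then forces $\abs{v}\ge a+1$ when $u=1$, and with larger $u$ it forces $v$ even more negative; in both cases $q=uq_L-\abs{v}q_{L+1}$. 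Here we must use the constraint $q<Q+q_{L+1}=q_L+(a+1)q_{L+1}$ together with $q>0$ to rule these out. When $u=1$ we get $q=q_L+vq_{L+1}$ only if $v\ge0$, so the real case is $u=1$, $0\le v\le a$, where $\eta_v\ge\eta_a$ and there is no contradiction. Other combinations, such as $u\ge2$ with any $v$, or $v\ge a+1$, either exceed the bound on $q$ or make the one-sided distance too large. The latter follows because the magnitude of $u\delta_L+v\delta_{L+1}$ is then at least $\abs{\delta_{L+1}}$ on the wrong side, or at least $\eta_a$.

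The main obstacle is exactly this case analysis, in particular the mixed-sign cases with $u\ge2$. There the size bound $q<q_L+(a+1)q_{L+1}$ must be combined with the identity $\delta_{L+2}=\delta_L+a_{L+2}\delta_{L+1}$ and the bound $a\le a_{L+2}-1$ in order to show that $u\delta_L+v\delta_{L+1}$ cannot land in $(0,\eta_a)$ on the correct side. If this gets messy, we would fall back on induction on $a$. The base case $a=0$ is $q<q_L+q_{L+1}$, which follows from \eqref{eq:best_approx_prop} together with the sign discussion, since indices $q<q_{L+1}$ give $\absside{q\alpha}\ge\abs{\delta_L}$. For the inductive step, if $q<Q+q_{L+1}$ and $\absside{q\alpha}<\eta_a\le\eta_{a-1}$, the inductive hypothesis forces $q\ge Q$. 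Then $q-Q$ lies in $[0,q_{L+1})$, and $\absside{(q-Q)\alpha}$ would be a tiny value on the correct side, which contradicts \eqref{eq:best_approx_prop} or the sign information.
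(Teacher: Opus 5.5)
Your derivation of the chain \eqref{eq:semiconv_ineq} is correct and is the paper's argument. The lattice argument for \eqref{eq:semiconv-best-approx}, however, does not work as written.

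First, $(u,v)$ is not determined by $q$. Replacing $(u,v)$ by $(u,v)+t(q_{L+1},-q_L)$ leaves $q$ unchanged and shifts $u\delta_L+v\delta_{L+1}$ by $\pm t$. This is why you needed the hedge ``or is an integer shift of such a value''. With that ambiguity, inferences of the form ``wrong side of $0$, hence large one-sided distance'' are invalid, because the quantity can have modulus exceeding $1$. You must fix $p$ first: $p=\floor{q\alpha}$ if $\delta_L>0$ and $p=\ceil{q\alpha}$ if $\delta_L<0$. Then $\absside{q\alpha}<\eta_a$ becomes $0<\sigma(u\delta_L+v\delta_{L+1})<\eta_a$, with $\sigma$ the sign of $\delta_L$, and only then do you solve for $(u,v)$.

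Second, in your case $u\ge1$, $v\le0$ the signs are reversed. There $u\delta_L$ and $v\delta_{L+1}$ have the \emph{same} sign, so $\sigma(u\delta_L+v\delta_{L+1})\ge\abs{\delta_L}\ge\eta_a$ immediately. The assertion that being below $\eta_a$ forces $\abs{v}\ge a+1$ is false. The cancelling regime is $u,v\ge1$, and it is no obstacle. There $q<q_L+(a+1)q_{L+1}$ gives $v\le a$, so $u=1$ yields $\eta_v\ge\eta_a$, and $u\ge2$ yields at least $2\abs{\delta_L}-a\abs{\delta_{L+1}}>\eta_a$. The case $u\le0<v$ yields a negative value. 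With $p$ fixed and these cases redone, the lattice proof is complete. As submitted, though, exactly the cases you call the main obstacle are left unproved or argued incorrectly.

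Your fallback induction is sound in the inductive step, with one mislabel: it is $\abssideother{(q-Q)\alpha}$, not $\absside{(q-Q)\alpha}$, that falls below $\eta_a$. Either way $\dist{(q-Q)\alpha}<\abs{\delta_L}$ with $0<q-Q<q_{L+1}$ contradicts \eqref{eq:best_approx_prop}. The base case, however, only covers $0<q<q_{L+1}$. The integers $q_{L+1}\le q<q_L+q_{L+1}$ are outside the reach of \eqref{eq:best_approx_prop} at index $L$. They are handled by the same subtraction trick with $Q=q_L$, so this is easy to repair.

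For comparison, the paper uses neither device. By \eqref{eq:greedy}, every $0<q<q_{L+2}$ not of the form $q_L+aq_{L+1}$ has $k_0(q)=L+1$, or $k_0(q)=L$ with $b_L(q)\ge2$, or $k_0(q)\le L-1$. Lemmas~\ref{lem:Ostr_direction}, \ref{lem:dist_smallestConv} and \ref{lem:dist_smallestDig} then give $\absside{q\alpha}>\absside{q_L\alpha}$ in each case, and \eqref{eq:semiconv-best-approx} is read off the chain. Your repaired routes are more self-contained: they need only the unimodularity of $\{(q_L,p_L),(q_{L+1},p_{L+1})\}$ or the classical property \eqref{eq:best_approx_prop}. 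The paper's route instead reuses its Ostrowski lemmas and yields the slightly stronger fact that every other $q<q_{L+2}$ satisfies $\absside{q\alpha}>\absside{q_L\alpha}$.
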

\begin{proof}
The inequalities in \eqref{eq:semiconv_ineq} follow from
\eqref{eq:qk_parity} and the fact that $a_{L+2}\dist{q_{L+1}\alpha}<\dist{q_{L}\alpha}$ (which follows, for example, from \eqref{eq:sum-everyother-full-convergents}).

For the best approximation property \eqref{eq:semiconv-best-approx}, note that all $q$ in the range $0<q<q_{L+2}$ which are not of the shape $q_L + a q_{L+1}$ with $0\leq a \leq a_{L+2}-1$, by \eqref{eq:greedy}, must either have $k_0(q)= L+1$, or $k_0(q) = L$ and $b_L(q) \geq 2$, or $k_0(q)\leq L-1$.

If $k_0(q) = L+1$, then Lemma~\ref{lem:Ostr_direction} implies $\absside{q\alpha}>1/2>\absside{q_L\alpha}$.

If $k_0(q)\leq L-1$, then Lemma~\ref{lem:dist_smallestConv} implies
$\absside{q\alpha}>\absside{q_L\alpha}$.

If $k_0(q) = L$ and $b_L(q)\geq 2$, then in the case $L\geq 1$, Lemma~\ref{lem:dist_smallestDig} implies $\absside{q\alpha}>\absside{q_L\alpha}$. 
If $L = 0$, this is easy to see as well.

Overall, $\absside{q\alpha}> \absside{q_L\alpha}$ holds for all $q$ that are not of the shape $q_L + a q_{L+1}$, and so the implication \eqref{eq:semiconv-best-approx} follows from \eqref{eq:semiconv_ineq}.
\end{proof}

\subsection{Minimising distances in certain ranges of integers}

The lemmas in this subsection will be particularly useful for characterising balancedness.

\begin{lemma}\label{lem:add_next_q}
Let $\alpha \in (0,1/2)$ and assume that $k_0(n) = L$ with $L\geq 1$.
Set $\absside{\cdot}$ as in \eqref{eq:def_absside}.    
Then
\[
    \absside{(n+q_{L+1})\alpha}
    < \absside{n\alpha}.
\]
More generally, the inequality holds for all $n$ with $k_0(n)\leq L$, as long as 
$\absside{n\alpha}<1/2$ and $\absside{q_L\alpha}<1/2$.
\end{lemma}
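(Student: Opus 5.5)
The idea is to write adding $q_{L+1}$ as subtracting a small positive quantity on the chosen side. We then check that this quantity is smaller than $\absside{n\alpha}$, so the subtraction rule for $\absside{\cdot}$ applies without wrap-around. I would prove the general statement directly, since the first statement is a special case of it.

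\emph{Step 1: the two conditions hold in the main case.} Assume $k_0(n) = L \geq 1$. By \eqref{eq:def_absside}, $\absside{n\alpha} = \dist{n\alpha} < 1/2$. Lemma~\ref{lem:Ostr_direction} (or Lemma~\ref{lem:absside_parity} applied with $n' = q_L$, which has $k_0(q_L) = L$) gives $\absside{q_L\alpha} < 1/2$. So it suffices to prove the general statement: $k_0(n) \leq L$, $\absside{n\alpha} < 1/2$ and $\absside{q_L\alpha} < 1/2$.

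\emph{Step 2: identify $\absside{q_{L+1}\alpha}$.} By \eqref{eq:qk_parity}, which is valid for all $k \geq 0$ because $\alpha < 1/2$, the quantity $\fract{q_k\alpha}$ lies below $1/2$ exactly for even $k$. Hence $\absside{q_L\alpha} < 1/2$ forces $\absside{q_k\alpha} = \dist{q_k\alpha}$ for every $k \equiv L \pmod 2$. For $k \not\equiv L \pmod 2$ it forces $\absside{q_k\alpha} = 1 - \dist{q_k\alpha}$. In particular,
\[
    \abssideother{q_{L+1}\alpha} = \absside{-q_{L+1}\alpha} = \dist{q_{L+1}\alpha} < 1/2 .
\]

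\emph{Step 3: compare sizes and subtract.} Since $\absside{n\alpha} < 1/2$, we have $\absside{n\alpha} = \dist{n\alpha}$. Lemma~\ref{lem:Ost_largedigits}(\ref{it:dist_lb}) gives $\dist{n\alpha} > \dist{q_{k_0(n)+1}\alpha}$. Because $k_0(n) + 1 \leq L+1$ and the sequence $\dist{q_k\alpha}$ is decreasing (by \eqref{eq:best_approx_prop}, or \eqref{eq:sum-everyother-full-convergents}), this yields
\[
    \absside{n\alpha} > \dist{q_{L+1}\alpha} = \absside{-q_{L+1}\alpha}.
\]
The subtraction rule $\absside{\xi_1} \geq \absside{\xi_2} \Rightarrow \absside{\xi_1 - \xi_2} = \absside{\xi_1} - \absside{\xi_2}$, applied with $\xi_1 = n\alpha$ and $\xi_2 = -q_{L+1}\alpha$, then gives
\[
    \absside{(n+q_{L+1})\alpha} = \absside{n\alpha} - \dist{q_{L+1}\alpha} < \absside{n\alpha}.
\]
The last inequality is strict because $\dist{q_{L+1}\alpha} > 0$ by irrationality.

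\emph{Main point of care.} The argument hinges on getting the sign bookkeeping in Step 2 right, that is, checking that $q_{L+1}$ really lands on the opposite side from $q_L$. Index $0$ is the exceptional index for Lemma~\ref{lem:Ostr_direction}, so one might worry about it here. However, \eqref{eq:qk_parity} holds for all $k \geq 0$ when $\alpha < 1/2$, so it causes no problem. Similarly, Lemma~\ref{lem:Ost_largedigits}(\ref{it:dist_lb}) places no restriction on $k_0(n)$, so the case $k_0(n) = 0$ is covered without change.
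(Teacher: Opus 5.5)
Your proof is correct and is essentially the paper's argument: Lemma~\ref{lem:Ost_largedigits}(\ref{it:dist_lb}) gives $\absside{n\alpha} = \dist{n\alpha} > \dist{q_{L+1}\alpha}$. The parity of the indices puts $q_{L+1}\alpha$ on the opposite side, i.e.\ $\absside{q_{L+1}\alpha} = 1-\dist{q_{L+1}\alpha}$, so adding $q_{L+1}$ just subtracts $\dist{q_{L+1}\alpha}$ without wrap-around.

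In the general case your version is slightly more careful than the paper's one-line proof. You make explicit the monotonicity of $\dist{q_k\alpha}$ needed when $k_0(n)<L$. You also use \eqref{eq:qk_parity} instead of Lemma~\ref{lem:absside_parity}, which requires $k_0\geq 1$ and so does not literally cover $L=0$; that case does arise in the later applications with $L=M-2$.
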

\begin{proof}
This follows from the fact that $\absside{n\alpha} = \dist{n\alpha}>\dist{q_{L+1}\alpha}$ (by Lemma~\ref{lem:Ost_largedigits}(\ref{it:dist_lb})) and $\absside{q_{L+1}\alpha} = 1-\dist{q_{L+1}\alpha}$ (by Lemma~\ref{lem:absside_parity}, since $L$ and $L+1$ have distinct parities). 
\end{proof}

In the cases where $L$ is too small for the above lemma, we will use the next two lemmas.

\begin{lemma}\label{lem:add_next_q_small}
Assume that $k_0(n) = 0$ and set $\absside{\cdot}$ as in \eqref{eq:def_absside}.
Then we have 
\[
    \absside{(n-q_1)\alpha}< \absside{n\alpha}
    \quad \text{or} \quad
    \absside{(n+q_1)\alpha} < \absside{n\alpha}.
\]
\end{lemma}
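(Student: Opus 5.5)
The plan is to reduce the statement to a single comparison of distances, namely $\dist{q_1\alpha} < \dist{n\alpha}$, and then use the subtraction rule for $\absside{\cdot}$ recorded at the start of the subsection on one-sided approximations.

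\emph{Step 1.} By the choice \eqref{eq:def_absside} we have $\absside{n\alpha} = \dist{n\alpha} < 1/2$. Since $k_0(n) = 0$, Lemma~\ref{lem:Ost_largedigits}(\ref{it:dist_lb}) with $L = 0$ gives
\[
    \absside{n\alpha} = \dist{n\alpha} > \dist{q_1\alpha}.
\]

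\emph{Step 2.} Since $q_1\alpha$ is not an integer, we have $\absside{q_1\alpha} + \absside{-q_1\alpha} = 1$. Both quantities are distances to integers measured from one side, so one of them equals $\dist{q_1\alpha}$. Choose the sign $\pm$ with $\absside{\pm q_1\alpha} = \dist{q_1\alpha}$. By Step~1,
\[
    \absside{n\alpha} > \absside{\pm q_1\alpha}.
\]

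\emph{Step 3.} The rule ``$\absside{\xi_1} \geq \absside{\xi_2} \iff \absside{\xi_1 - \xi_2} = \absside{\xi_1} - \absside{\xi_2}$'' now gives
\[
    \absside{(n \mp q_1)\alpha} = \absside{n\alpha} - \dist{q_1\alpha}.
\]
The right-hand side is strictly smaller than $\absside{n\alpha}$ because $\dist{q_1\alpha} > 0$, as $\alpha$ is irrational. So one of the two alternatives in the statement holds.

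If $n - q_1 \leq 0$, the expression $\absside{(n-q_1)\alpha}$ still makes sense as a quantity on the torus, and the inequality holds as stated. If positivity of $n - q_1$ were needed later, one would check which sign occurs, but the lemma as stated does not require this.

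I expect no real obstacle. The only point needing care is Step~1: the bound $\dist{n\alpha} > \dist{q_1\alpha}$ for $k_0(n) = 0$ must be valid even when $\fract{n\alpha}$ lies on ``the wrong side of $1/2$''. Part~(\ref{it:dist_lb}) of Lemma~\ref{lem:Ost_largedigits} is stated for every $L \geq 0$, so it covers this case. If it did not, I would instead argue via Lemma~\ref{lem:q0-wrongside-improve} and the bound $\fract{n\alpha} < 1 - \fract{\alpha}$ from its proof.
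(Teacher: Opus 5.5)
Your proof is correct and follows the paper's argument. The paper's one-line proof invokes $\absside{n\alpha} = \dist{n\alpha} > \dist{q_1\alpha}$ from Lemma~\ref{lem:Ost_largedigits}(\ref{it:dist_lb}) with $L=0$. Your Steps~2--3 (picking the sign with $\absside{\pm q_1\alpha} = \dist{q_1\alpha}$ and applying the subtraction rule) spell out the step the paper leaves implicit.
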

\begin{proof}
This follows from the fact that $\absside{n\alpha} = \dist{n\alpha}>\dist{q_{1}\alpha}$, by Lemma~\ref{lem:Ost_largedigits}(\ref{it:dist_lb}).
\end{proof}

\begin{lemma}\label{lem:dist_pm1}
Assume $b_0(n) \geq 2$ and
set $\absside{\cdot}$ as in \eqref{eq:def_absside}. 
Then we have
\[
    \absside{(n-1)\alpha}< \absside{n\alpha}
    \quad \text{or} \quad
    \absside{(n+1)\alpha}< \absside{n\alpha}. 
\]
\end{lemma}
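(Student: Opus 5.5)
Since $\absside{\cdot}$ is chosen by \eqref{eq:def_absside}, we have $\absside{n\alpha} = \dist{n\alpha} < 1/2$. The plan is the same as for Lemma~\ref{lem:add_next_q_small}: find a step $\pm 1$ whose $\absside{\cdot}$-value is smaller than $\absside{n\alpha}$, and then use the subtraction rule
\[
    \absside{\xi_1} \geq \absside{\xi_2} \implies \absside{\xi_1 - \xi_2} = \absside{\xi_1} - \absside{\xi_2}.
\]
Since $q_0 = 1$, the relevant quantities are $\absside{\alpha}$ and $\absside{-\alpha}$. One of them equals $\dist{\alpha} = \alpha$ and the other equals $1-\alpha$. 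If $\absside{n\alpha} \geq \absside{\pm\alpha}$ for the choice of sign with $\absside{\pm\alpha} = \alpha$, then
\[
    \absside{(n\mp 1)\alpha} = \absside{n\alpha} - \alpha < \absside{n\alpha},
\]
which is one of the two desired inequalities. So everything reduces to the single inequality $\dist{n\alpha} > \dist{\alpha}$. Strict inequality is automatic, because $\dist{n\alpha} = \dist{\alpha}$ would give $n\alpha \equiv \pm\alpha \pmod 1$, i.e.\ $n = 1$ (or $n\alpha \equiv -\alpha$, which forces $n+1 = 0$), contradicting $n \geq 2$.

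To get $\dist{n\alpha} \geq \dist{\alpha}$, use that $k_0(n) = 0$ and $b_0(n) = b_0 \geq 2$. Lemma~\ref{lem:Ost_largedigits}(\ref{it:dist_lb2}) with $L = 0$ gives directly
\[
    \dist{n\alpha} > \dist{q_0\alpha} + \dist{q_1\alpha} > \dist{q_0\alpha} = \dist{\alpha},
\]
which finishes the argument.

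The only delicate point is whether Lemma~\ref{lem:Ost_largedigits}(\ref{it:dist_lb2}) really applies in the edge case $L=0$, where $\fract{n\alpha}$ may lie on the ``wrong side'' of $1/2$. If one wants to avoid relying on it there, I would argue directly. Write $n = b_0 + n'$ with $k_0(n') \geq 1$ (or $n' = 0$). By \eqref{eq:sum-everyother-full-convergents} and Lemma~\ref{lem:Ostr_direction}, $\fract{n'\alpha}$ lies in
\[
    \bigl(-\dist{q_1\alpha},\; \dist{q_0\alpha} - \dist{q_1\alpha}\bigr) \pmod 1 .
\]
Hence $\fract{n\alpha}$ lies in
\[
    \bigl(b_0\alpha - \dist{q_1\alpha},\; (b_0+1)\alpha - \dist{q_1\alpha}\bigr).
\]
Here $2 \leq b_0 \leq a_1 - 1$, so $b_0\alpha \leq (q_1-1)\alpha < 1 - \dist{q_1\alpha}$. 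The lower endpoint is then at least $2\alpha - \dist{q_1\alpha} > \alpha$, since $\dist{q_1\alpha} < \alpha$. The upper endpoint satisfies $(b_0+1)\alpha - \dist{q_1\alpha} \leq q_1\alpha - \dist{q_1\alpha} = p_1 - 2\dist{q_1\alpha}\cdot 0 \ldots$; more simply, it is at most $1 - \alpha$, using $q_1\alpha = 1 - \dist{q_1\alpha}$ by \eqref{eq:qk_parity}. Thus $\fract{n\alpha} \in (\alpha, 1-\alpha)$, which means $\dist{n\alpha} > \alpha$.

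This endpoint check is the main obstacle, and it mirrors the computation in the proof of Lemma~\ref{lem:q0-wrongside-improve}.
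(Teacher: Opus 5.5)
Your reduction is sound. Since $\absside{n\alpha}=\dist{n\alpha}$ and one of $\absside{\pm\alpha}$ equals $\alpha$, the subtraction rule reduces everything to $\dist{n\alpha}>\alpha$, i.e.\ to $\alpha<\fract{n\alpha}<1-\alpha$. That is exactly what the paper proves; it then reads off both $\absleft{(n-1)\alpha}<\absleft{n\alpha}$ and $\absright{(n+1)\alpha}<\absright{n\alpha}$, so no choice of side is needed. The gap is that neither of your two arguments for this inequality works. Your doubt about the first one is justified: Lemma~\ref{lem:Ost_largedigits}(\ref{it:dist_lb2}) genuinely fails for $L=0$, because of the wrap-around. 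Take $\alpha=1-1/\sqrt2=[0;3,2,2,\ldots]$, so that $q_0,q_1,q_2,q_3=1,3,7,17$, and take $n=9=q_2+2q_0$. Then $\fract{9\alpha}\approx 0.636$, so $\dist{9\alpha}\approx 0.364<\dist{q_0\alpha}+\dist{q_1\alpha}\approx 0.414$. You cannot cite that item here, and the whole weight falls on your direct argument.

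In the direct argument, your window for $n'\alpha\equiv\sum_{k\ge1}b_k\delta_k$ is reflected. By \eqref{eq:delta_sign} the odd-indexed terms are negative, and by \eqref{eq:sum-everyother-full-convergents} with $L=0$ their sum lies in $(-\dist{q_0\alpha},0]$. The even-indexed terms are positive, and by \eqref{eq:sum-everyother-full-convergents} with $L=1$ their sum lies in $[0,\dist{q_1\alpha})$. So the correct window is $(-\dist{q_0\alpha},\dist{q_1\alpha})$, not $(-\dist{q_1\alpha},\dist{q_0\alpha}-\dist{q_1\alpha})$. In the example above, $n=22=q_3+q_1+2q_0$ has $n'\alpha\equiv-\dist{q_1\alpha}-\dist{q_3\alpha}$ and $\fract{22\alpha}\approx 0.444$. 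This lies outside your claimed range $(2\alpha-\dist{q_1\alpha},3\alpha-\dist{q_1\alpha})\approx(0.464,0.757)$.

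Your upper-endpoint check is also false on its own terms. For $b_0=a_1-1$ you have $(b_0+1)\alpha-\dist{q_1\alpha}=q_1\alpha-\dist{q_1\alpha}=1-2\dist{q_1\alpha}$. This exceeds $1-\alpha$ whenever $\dist{q_1\alpha}<\alpha/2$, e.g.\ whenever $a_2\ge 2$.

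With the correct window the argument closes at once: $\fract{n\alpha}\in\bigl((b_0-1)\alpha,\;b_0\alpha+\dist{q_1\alpha}\bigr)$. Here $(b_0-1)\alpha\ge\alpha$ because $b_0\ge2$. Also $b_0\alpha+\dist{q_1\alpha}\le(a_1-1)\alpha+(1-a_1\alpha)=1-\alpha$, using $q_1\alpha=1-\dist{q_1\alpha}$. This is precisely the paper's argument.
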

\begin{proof}
Similarly to the proof of Lemma~\ref{lem:q0-wrongside-improve}, one can use \eqref{eq:delta_sign} and \eqref{eq:sum-everyother-full-convergents} to show that $\fract{\alpha} < \fract{n\alpha} < 1- \fract{\alpha}$.
This implies 
$\fract{(n-1)\alpha} < \fract{n\alpha} < \fract{(n+1)\alpha}$, and so $\absleft{(n-1)\alpha}<\absleft{n\alpha}$ and $\absright{(n+1)\alpha}<\absright{n\alpha}$. This proves the lemma.
\end{proof}

\begin{lemma}\label{lem:absside_qM}
Assume that either $k_0(n) \geq L$ or 
$n = q_{L-1} + \sum_{k = L+2t}^N b_k q_k$ with $t\geq 0$ and $b_{L+2t}\neq 0$. 
Set $\absside{\cdot}$ as in \eqref{eq:def_absside}.
Then 
\begin{equation}\label{eq:n-qL-twosides}
    \absside{n\alpha} 
    	= \min \{ \absside{x\alpha} \colon n - q_L < x < n + q_L\}.
\end{equation}
\end{lemma}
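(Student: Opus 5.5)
The plan is to reduce \eqref{eq:n-qL-twosides} to the single inequality
\[
    \dist{n\alpha} < \dist{q_{L-1}\alpha},
\]
and then combine it with the best approximation property \eqref{eq:best_approx_prop}. If $L=0$ the range $n-q_0<x<n+q_0$ contains only $x=n$, so there is nothing to prove. From now on assume $L\geq 1$.

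\textbf{Step 1: the key inequality.} We show $\dist{n\alpha} < \dist{q_{L-1}\alpha}$ in both cases of the hypothesis.
\begin{enumerate}
    \item If $k_0(n)\geq L$, then Lemma~\ref{lem:Ost_largedigits}(\ref{it:dist_ub}), applied with $k_0(n)\geq 1$, gives $\dist{n\alpha}<\dist{q_{k_0(n)-1}\alpha}$.
    \begin{enumerate}
        \item If $k_0(n)=L$, this is exactly the claim.
        \item If $k_0(n)>L$, we additionally use that $\dist{q_k\alpha}$ is decreasing in $k$, so $\dist{q_{k_0(n)-1}\alpha}\leq \dist{q_{L-1}\alpha}$.
    \end{enumerate}
    \item If $n = q_{L-1} + \sum_{k = L+2t}^N b_k q_k$ with $b_{L+2t}\neq 0$, then $n$ has exactly the shape in Lemma~\ref{lem:Ost_largedigits}(\ref{it:dist_specialsmall}) with $L-1$ in place of $L$. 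That lemma gives $\dist{n\alpha}<\dist{q_{L-1}\alpha}$.
\end{enumerate}

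\textbf{Step 2: every nonzero shift of size less than $q_L$ is far from $0$.} Let $x = n+d$ with $0<\abs{d}<q_L$. By \eqref{eq:best_approx_prop} with $k=L-1$, applied to $\abs{d}$ (using $\dist{-\xi}=\dist{\xi}$), we get $\dist{d\alpha}\geq\dist{q_{L-1}\alpha}$. Combined with Step 1 this yields
\[
    \absside{-d\alpha}\geq \dist{d\alpha} \geq \dist{q_{L-1}\alpha} > \dist{n\alpha} = \absside{n\alpha},
\]
where the last equality holds by the choice \eqref{eq:def_absside}.

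\textbf{Step 3: conclude with the addition rules for $\absside{\cdot}$.} Write $(n+d)\alpha = n\alpha - (-d\alpha)$. Since $\absside{-d\alpha} > \absside{n\alpha}$, the subtraction wraps around, and
\[
    \absside{(n+d)\alpha} = 1 - \bigl(\absside{-d\alpha}-\absside{n\alpha}\bigr) > \absside{n\alpha},
\]
because $\absside{-d\alpha}<1$. All quantities here are non-integers, since $\alpha$ is irrational and $d\neq 0$, $n\neq 0$. Hence the minimum over the range $n-q_L<x<n+q_L$ is attained uniquely at $x=n$, which proves \eqref{eq:n-qL-twosides}.

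\textbf{Expected obstacle.} The delicate point is Step 1 in the second case. When $q_{L-1}<q_L$, the shift $d=\pm q_{L-1}$ does lie in the range. Here we genuinely need the strict inequality $\dist{n\alpha}<\dist{q_{L-1}\alpha}$, which is exactly the parity condition encoded in Lemma~\ref{lem:Ost_largedigits}(\ref{it:dist_specialsmall}). In the degenerate case $L=1$ with $q_0=q_1=1$ the range $0<\abs{d}<q_1$ is empty, so the statement is trivial.
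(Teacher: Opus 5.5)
Your proof is correct and follows the paper's argument: the same reduction to $\dist{n\alpha}<\dist{q_{L-1}\alpha}$ via Lemma~\ref{lem:Ost_largedigits}(\ref{it:dist_ub}, \ref{it:dist_specialsmall}), combined with the best approximation property \eqref{eq:best_approx_prop} for shifts $0<\abs{d}<q_L$. The only difference is in the last step. The paper splits into cases according to the sign of $d$ and whether $\absside{d\alpha}<1/2$. You instead treat all shifts at once using $\absside{-d\alpha}\geq\dist{d\alpha}>\absside{n\alpha}$ and the subtraction rule, which is a slightly cleaner way to finish.
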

\begin{proof}
Let $n$ be as in the statement of the lemma. If $L = 0$, then $q_L = 1$, so the statement is trivial. 
Otherwise, by Lemma~\ref{lem:Ost_largedigits}(\ref{it:dist_ub}, \ref{it:dist_specialsmall}), we have $\dist{n\alpha}< \dist{q_{L-1}\alpha}$. 
On the other hand, by the best approximation property of convergents \eqref{eq:best_approx_prop}, we have $\dist{d\alpha}\geq \dist{q_{L-1}\alpha}$ for all $1\leq d<q_L$. 
Thus, for all $1 \leq d<q_L$ with $\absside{d\alpha} < 1/2$, we have
$\absside{(n + d)\alpha} = \absside{n\alpha} + \absside{d\alpha} > \absside{n\alpha}$ and $\absside{(n-d)\alpha} > 1/2 >\absside{n\alpha}$.
Similarly, for all $1 \leq d<q_L$ with $\absside{d\alpha} > 1/2$, we have
$\absside{(n+d)\alpha} > 1/2 >\absside{n\alpha}$ and $\absside{(n - d)\alpha} = \absside{n\alpha} + \dist{d\alpha} > \absside{n\alpha}$.
This implies \eqref{eq:n-qL-twosides}.
\end{proof}

\begin{lemma}\label{lem:absside_above}
Let $\alpha \in (0,1)$ and assume that $k_0(n)\geq L$ with $L\geq 1$.
Set $\absside{\cdot}$ as in \eqref{eq:def_absside}.
Then 
\[
    \absside{n\alpha} 
    	= \min \{ \absside{x\alpha} \colon n \leq x < n + q_{L-1} + q_{L}\}.
\]
Moreover, if $b_L(n) \leq a_{L+1}-1$, we can extend the range to
\[
    \absside{n\alpha} 
    	= \min \{ \absside{x\alpha} \colon n \leq x < n + q_{L-1} + 2q_{L}\}.
\]
\end{lemma}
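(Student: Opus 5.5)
Since $x \geq n$, write $x = n + d$ with $0 \leq d < q_{L-1}+q_L$ (respectively $d < q_{L-1}+2q_L$). The case $d=0$ is trivial. For $d\geq 1$ we must show $\absside{(n+d)\alpha} \geq \absside{n\alpha}$; strict inequality is not needed, and we will in fact get it.

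By Lemma~\ref{lem:absside_parity} and the definition \eqref{eq:def_absside}, $\absside{\cdot}$ is determined by the parity of $k_0(n)$. By Lemma~\ref{lem:Ost_largedigits}(\ref{it:dist_ub}) we have $\absside{n\alpha} = \dist{n\alpha} < \dist{q_{L-1}\alpha}$, since $k_0(n)\geq L$ and the bound only improves for larger $k_0$. We then split according to the one-sided size of $d\alpha$.

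\textbf{Case $\absside{d\alpha} < 1-\absside{n\alpha}$.} The addition rule gives
\[
\absside{(n+d)\alpha} = \absside{n\alpha}+\absside{d\alpha} > \absside{n\alpha},
\]
so this case is done.

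\textbf{Case $\absside{d\alpha} \geq 1-\absside{n\alpha}$.} Then $\absside{-d\alpha} = 1-\absside{d\alpha} \leq \absside{n\alpha} < \dist{q_{L-1}\alpha}$. So $d$ approximates $0$ from the ``other'' side $\abssideother{\cdot}$ better than $q_{L-1}$ does.

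\begin{itemize}
\item For $d < q_L$ this contradicts the best approximation property \eqref{eq:best_approx_prop}.
\item For $q_L \leq d < q_{L-1}+q_L$ we use Lemma~\ref{lem:semiconv-best-approx} with index $L-1$. The side $\abssideother{\cdot}$ is relevant here, and we must check whether $\abssideother{q_{L-1}\alpha}<1/2$. Note that $\absside{q_L\alpha}<1/2$ when $k_0(n)\equiv L$, and in general $q_{L-1}$ and $q_L$ lie on opposite sides. Then \eqref{eq:semiconv-best-approx} with $a=0$ gives $\abssideother{d\alpha} \geq \dist{q_{L-1}\alpha}$ for $0<d<q_{L-1}+q_L$, on whichever side $q_{L-1}$ lies. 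If instead $d$ lies on the side of $q_L$, we get $\dist{d\alpha}\geq \dist{q_L\alpha}$, and then need $\dist{q_L\alpha} > \absside{n\alpha}$. This would fail when $k_0(n)=L$ with $b_L(n)=1$.
\end{itemize}

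So the real work is a careful side-bookkeeping. Let $\absside{\cdot}$ be the side on which $n\alpha$ is close to $0$. The danger is a $d$ with $-d\alpha$ close to $0$ on that same side, i.e.\ $d\alpha$ close to $0$ on the opposite side, closer than $\absside{n\alpha}$. Write $\sigma$ for the side of $q_L$ (the side of $n$ when $k_0(n)=L$; when $k_0(n)>L$ I will treat parities separately, using Lemma~\ref{lem:dist_smallestConv}). The opposite side is the side of $q_{L-1}$. By Lemma~\ref{lem:semiconv-best-approx} with index $L-1$, the best approximations on that side for $d< q_{L-1}+2q_L$ are $q_{L-1}$ and $q_{L-1}+q_L$, the latter when $a_{L+1}\geq 2$; when $a_{L+1}=1$, $q_{L-1}+q_L=q_{L+1}$. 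So for $d<q_{L-1}+q_L$ we need $\dist{q_{L-1}\alpha} > \absside{n\alpha}$, which holds by (\ref{it:dist_ub}). For the extended range we need $\dist{(q_{L-1}+q_L)\alpha} > \absside{n\alpha}$, which is exactly Lemma~\ref{lem:Ost_largedigits}(\ref{it:dist_ub2}) under $b_L(n)\leq a_{L+1}-1$. If $k_0(n)>L$, (\ref{it:dist_ub}) gives even $\dist{n\alpha}<\dist{q_L\alpha}$, which beats all relevant values.

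\textbf{Main obstacle.} The main obstacle is this side bookkeeping: making sure Lemma~\ref{lem:semiconv-best-approx} is applied on the correct side, and handling the edge cases $L=1$ (where $q_0$ may behave exceptionally, via Lemma~\ref{lem:q0-wrongside-improve}) and $a_{L+1}=1$.
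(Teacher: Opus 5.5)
Your argument is essentially the paper's: a $d$ with $\absside{(n+d)\alpha}<\absside{n\alpha}$ would have to satisfy $\abssideother{d\alpha}<\absside{n\alpha}<\dist{q_{L-1}\alpha}$. Lemma~\ref{lem:semiconv-best-approx}, applied on the side of $q_{L-1}$, rules this out: with $a=0$ for the first range, and with $a=1$ together with Lemma~\ref{lem:Ost_largedigits}(\ref{it:dist_ub2}) for the extended range. The paper phrases your second case as a contradiction, but the computation is the same.

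Your worry about $d$ lying ``on the side of $q_L$'' when $b_L(n)=1$ is a red herring. In your second case $d$ lies on the side opposite to $n$ by construction, and for $k_0(n)=L$ that is the side of $q_{L-1}$. Likewise, $L=1$ needs no special care, since $q_0$ lies on the even side by \eqref{eq:qk_parity}.

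One step does need repair. For $k_0(n)>L$, the bound $\dist{n\alpha}<\dist{q_L\alpha}$ does not beat every relevant value. Suppose $a_{L+1}=1$ and $k_0(n)\equiv L\pmod{2}$. Then $d=q_{L+1}=q_{L-1}+q_L$ lies in the extended range (which applies, since $b_L(n)=0$). It is on the side opposite to $n$, and $\dist{q_{L+1}\alpha}<\dist{q_L\alpha}$. There you need the sharper bound $\dist{n\alpha}<\dist{q_{k_0(n)-1}\alpha}\leq\dist{q_{L+1}\alpha}$ from Lemma~\ref{lem:Ost_largedigits}(\ref{it:dist_ub}).

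The paper sidesteps all this parity bookkeeping. It applies the $k_0(n)=L$ case with $L$ replaced by $L'=k_0(n)$, whose range satisfies $q_{L'-1}+q_{L'}\geq q_L+q_{L+1}\geq q_{L-1}+2q_L$ and so already contains yours.
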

\begin{proof}
Assume first that $k_0(n) = L$ with $L\geq 1$ and $\absside{n\alpha}<1/2$.
Then Lemma~\ref{lem:Ost_largedigits}(\ref{it:dist_ub}) implies $\absside{n\alpha}=\dist{n\alpha} < \dist{q_{L-1}\alpha}$.
Let $\abssideother{\cdot}$ denote the distance to the nearest integer in the opposite direction, i.e., $\{\absside{\cdot}, \abssideother{\cdot}\} = \{\absleft{\cdot}, \absright{\cdot}\}$.
Then by Lemma~\ref{lem:absside_parity} we have $\absside{q_L\alpha}<1/2$ and $\abssideother{q_{L-1}\alpha}<1/2$.

For the sake of contradiction, assume $\absside{(n+d)\alpha}< \absside{n\alpha}$ for some $1\leq d < q_{L-1} + q_L$. This is equivalent to $\abssideother{(n+d)\alpha}>\abssideother{n\alpha}$, and so
\[
    \abssideother{d\alpha}
    = \abssideother{(n+d)\alpha} - \abssideother{n\alpha}
    < 1 - \abssideother{n\alpha}
    = \absside{n\alpha}
    < \dist{q_{L-1}\alpha}
    = \abssideother{q_{L-1}\alpha}
\]
for some $1\leq d < q_{L-1} + q_L$, contradicting Lemma~\ref{lem:semiconv-best-approx} with $a = 0$.

If $k_0(n) > L$, then the previous case just gives us a stronger result than necessary.

Finally, assume that $k_0(n) = L$ and $b_L \leq a_{L+1}-1$.
Then Lemma~\ref{lem:Ost_largedigits}(\ref{it:dist_ub2}) says that in fact $\absside{n\alpha} < \dist{(q_{L-1} + q_L)\alpha}$, and we get the better bound by the same argument as before, this time using Lemma~\ref{lem:semiconv-best-approx} with $a = 1$.
\end{proof}

Next, we determine integers larger than $n$ which minimise $\absside{x\alpha}$ in certain ranges strictly above $n$.

\begin{lemma}\label{lem:second_best}
Let $\alpha \in (0,1)$ and $L\geq 0$, and assume $\absside{q_L\alpha}<1/2$.
Let $n\geq 1$ and
\[
    n' := \min\{x > n \colon \absside{x\alpha} < \absside{n\alpha} \}.
\]
Moreover, let $0\leq a \leq a_{L+2} -1$ and assume $n + q_L + a q_{L+1} < n'$.
Set
\[
    u : = \min\{n', n + q_L + (a+1)q_{L+1} \}.
\]
Then we have
\[
	\absside{(n+q_L+aq_{L+1})\alpha}
		=\min_{n<x<u} \absside{x\alpha}.
\]
\end{lemma}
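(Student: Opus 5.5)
Set $y := n + q_L + a q_{L+1}$ and write every candidate as $x = n + d$ with $0 < d < u - n \le q_L + (a+1)q_{L+1}$. The plan is to compare $\absside{x\alpha}$ with $\absside{y\alpha}$ by looking at the difference $x - y$. The two structural inputs will be Lemma~\ref{lem:semiconv-best-approx}, used for the one-sided best approximation of the semiconvergent $q_L + a q_{L+1}$, and the definition of $n'$.

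First I will show that every $x$ with $n < x < u$ satisfies $\absside{x\alpha} \geq \absside{n\alpha}$. Indeed $x < u \le n'$, and $n'$ is the first index after $n$ at which $\absside{\cdot\,\alpha}$ drops below $\absside{n\alpha}$. In particular $\absside{y\alpha} \geq \absside{n\alpha}$, because $y < n'$ by hypothesis. Using the subtraction rule for $\absside{\cdot}$, this gives
\[
\absside{(x-n)\alpha} = \absside{x\alpha} - \absside{n\alpha} \quad\text{for all } n < x < u.
\]
So on the whole window, minimising $\absside{x\alpha}$ is the same as minimising $\absside{d\alpha}$ over $0 < d < u - n$. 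One point needs care: if $\absside{x\alpha} = \absside{n\alpha}$ exactly, then $x\alpha \equiv n\alpha$, which is impossible for irrational $\alpha$ and $x \ne n$. Hence the inequality is strict, $\absside{d\alpha} > 0$, and the subtraction rule applies without an edge case.

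Next I will use that the range $0 < d < u - n$ is contained in $0 < d < q_L + (a+1) q_{L+1}$. Then \eqref{eq:semiconv-best-approx} of Lemma~\ref{lem:semiconv-best-approx} gives $\absside{d\alpha} \ge \absside{(q_L + a q_{L+1})\alpha}$. That lemma's hypothesis $\absside{q_L\alpha} < 1/2$ is exactly what we assumed. The point $d_0 = q_L + a q_{L+1}$ lies in this range because $y < u$. Note that $y < n'$ by hypothesis, and $y < n + q_L + (a+1) q_{L+1}$ trivially. Therefore $\min_{0<d<u-n} \absside{d\alpha} = \absside{d_0\alpha}$. Translating back via the displayed identity yields $\absside{x\alpha} \ge \absside{n\alpha} + \absside{d_0\alpha} = \absside{y\alpha}$ for every $n < x < u$, with equality at $x = y$. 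This is the claim.

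I expect the only delicate step to be the additivity identity above: we must make sure the subtraction rule $\absside{\xi_1} \ge \absside{\xi_2} \Rightarrow \absside{\xi_1 - \xi_2} = \absside{\xi_1} - \absside{\xi_2}$ is applied with $\xi_1 = x\alpha$ and $\xi_2 = n\alpha$. This is precisely where the minimality of $n'$ enters, and it should be checked that nothing beyond the definition of $n'$ is needed. Finally, Lemma~\ref{lem:semiconv-best-approx} is stated for $\alpha < 1/2$ while the present lemma allows $\alpha \in (0,1)$. I will handle this by the standing convention of Remark~\ref{rem:Ostr-flip-alpha} and Lemma~\ref{lem:1-alpha-Ostr}.
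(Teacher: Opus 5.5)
Your argument is correct and is essentially the paper's proof, run directly rather than by contradiction. Minimality of $n'$ gives $\absside{x\alpha}>\absside{n\alpha}$ for $n<x<u$, the subtraction rule turns this into a statement about $\absside{(x-n)\alpha}$, and \eqref{eq:semiconv-best-approx} of Lemma~\ref{lem:semiconv-best-approx} finishes.

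Your closing remark needs tightening. The restriction $\alpha<1/2$ must be taken as the section's standing assumption, as the paper implicitly does; it cannot be obtained by transferring through Lemma~\ref{lem:1-alpha-Ostr}. For $\alpha>1/2$ and $L=0$ the statement is actually false. For example, take $1-\alpha\in(1/4,1/3)$, $n=1$, $a=1$. Then $n'=4$ and $u=4$, but $\absside{2\alpha}<\absside{3\alpha}$.
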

\begin{proof}
First, note that the assumption $n + q_L + a q_{L+1} < n'$ implies $\absside{(n + q_L + a q_{L+1})\alpha}> \absside{n\alpha}$.
For the sake of contradiction, assume there exists an integer $x$ with $n < x < u$ such that $\absside{x\alpha} < \absside{(n+q_L+aq_{L+1})\alpha}$. 
Then, since $u\leq n'$ and $u \leq n+q_L+(a+1)q_{L+1}$, we have in fact 
\begin{equation}\label{eq:nd_1}
    \absside{n\alpha} < 
    \absside{x\alpha}
    = \absside{(n+d)\alpha}
    < \absside{(n+q_L+aq_{L+1})\alpha}
\end{equation}
with $0<d<q_L + (a+1)q_{L+1}$.
But \eqref{eq:nd_1} implies $\absside{d\alpha}< \absside{(q_L+aq_{L+1})\alpha}$, contradicting Lemma~\ref{lem:semiconv-best-approx}.
\end{proof}

\subsection{Ostrowski representations of $n$ and $q_T - n$}\label{sec:Ostr_qT-n}

Finally, we want to describe how the shape of the Ostrowski representations of $n$ and  $q_T -n $ are related to each other, for $T$ sufficiently large.

\begin{lemma}\label{lem:flip-simple}
Let $\alpha \in (0,1/2)$ and assume that $k_0(n) = L$ with $L\geq 1$
and $q_L < n < q_{N+1}$,  but that $n$ is not of the shape $n = q_L + \sum_{k=L+1+2t}^N b_k q_k$ with $t\geq 0$ and $b_{L+1+2t}>0$.

Then for $T\geq N+2$ we have that $q_T - n$ is of the shape $q_T - n = q_{L-1} + \sum_{k=L+2t}^{T-1} b'_k q_k$ with $t\geq 0$ and $b'_{L+2t}>0$.
\end{lemma}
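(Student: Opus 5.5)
The plan is to locate $n' := q_T - n$ by estimating $\dist{n'\alpha}$ from above and below. I will then read off its Ostrowski shape from Lemma~\ref{lem:Ost_largedigits}, using Lemma~\ref{lem:Ostr_direction} for parity information. Since $n'\alpha \equiv q_T\alpha - n\alpha \pmod 1$, the triangle inequality gives
\[
    \dist{n\alpha} - \dist{q_T\alpha} \;\leq\; \dist{n'\alpha} \;\leq\; \dist{n\alpha} + \dist{q_T\alpha},
\]
and $\dist{q_T\alpha} \leq \dist{q_{N+2}\alpha}$ because $T \geq N+2$.

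\textbf{Two-sided bound.} For the upper bound I use Lemma~\ref{lem:Ost_largedigits}(\ref{it:dist_ub_N}), which applies since $L\geq 1$ and $n<q_{N+1}$. It gives $\dist{n\alpha} < \dist{q_{L-1}\alpha} - \dist{q_{N+2}\alpha}$, hence $\dist{n'\alpha} < \dist{q_{L-1}\alpha}$. For the lower bound I use Lemma~\ref{lem:Ost_largedigits}(\ref{it:dist_notspecial}), whose hypotheses (that $q_L<n<q_{N+1}$ and $n$ is not of the special shape) are exactly those of the present lemma. It gives $\dist{n\alpha} \geq \dist{q_L\alpha} + \dist{q_N\alpha}$. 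Since $\dist{q_T\alpha} < \dist{q_N\alpha}$, this yields $\dist{n'\alpha} > \dist{q_L\alpha}$. So
\[
    \dist{q_L\alpha} < \dist{n'\alpha} < \dist{q_{L-1}\alpha}.
\]

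\textbf{Pinning down $K := k_0(n')$.} If $K \leq L-2$, then Lemma~\ref{lem:Ost_largedigits}(\ref{it:dist_lb}) gives $\dist{n'\alpha} > \dist{q_{K+1}\alpha} \geq \dist{q_{L-1}\alpha}$, a contradiction. If $K \geq L+1$, then (\ref{it:dist_ub}) gives $\dist{n'\alpha} < \dist{q_{K-1}\alpha} \leq \dist{q_L\alpha}$, again a contradiction.

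It remains to exclude $K = L$, and I expect this to be the only delicate point; parity handles it. By Lemma~\ref{lem:Ostr_direction}, $n\alpha$ lies on the side of $0$ determined by the parity of $L$, at distance $\dist{n\alpha}$. Since $\dist{q_T\alpha} < \dist{n\alpha}$, adding $q_T\alpha$ cannot move $q_T\alpha - n\alpha$ across $0$. Hence $n'\alpha$ lies on the side opposite to $n\alpha$. If $K = L \geq 1$, Lemma~\ref{lem:Ostr_direction} would place $n'\alpha$ on the same side as $n\alpha$, which is impossible. Therefore $K = L-1$. When $L=1$, $K=0$ is simply this value, so no parity statement is needed for $K=0$.

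\textbf{Reading off the shape.} Now apply Lemma~\ref{lem:Ost_largedigits}(\ref{it:dist_specialsmall}) to $n'$ with $k_0(n') = L-1$. Since $\dist{n'\alpha} < \dist{q_{L-1}\alpha}$, the representation of $n'$ is $q_{L-1} + \sum_{k = L+2t}^{N'} b'_k q_k$ with $t \geq 0$ and $b'_{L+2t} > 0$. Finally, $n' < q_T$ together with \eqref{eq:greedy} forces $N' \leq T-1$, which is the claimed form.
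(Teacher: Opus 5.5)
Your proposal is correct and follows essentially the same route as the paper. Both arguments sandwich $\dist{(q_T-n)\alpha}$ strictly between $\dist{q_L\alpha}$ and $\dist{q_{L-1}\alpha}$ via Lemma~\ref{lem:Ost_largedigits}(\ref{it:dist_notspecial}) and (\ref{it:dist_ub_N}), rule out $k_0(q_T-n)=L$ by the side-of-zero argument from Lemma~\ref{lem:Ostr_direction}, and read off the shape from (\ref{it:dist_lb}) and (\ref{it:dist_specialsmall}). The only difference is order: the paper first derives $k_0(q_T-n)\le L$ from the lower bound and then applies parity before using the upper bound.
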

\begin{proof}
Let $n$ be as in the statement of the lemma. Then by Lemma~\ref{lem:Ost_largedigits}(\ref{it:dist_notspecial}) we have $\dist{n\alpha} \geq \dist{q_L\alpha} + \dist{q_N} > \dist{q_L\alpha} + \dist{q_T \alpha}$, and so $\dist{(q_T - n)\alpha} >  \dist{q_L\alpha}$.
Now Lemma~\ref{lem:Ost_largedigits}(\ref{it:dist_ub}) implies that $k_0(q_T - n) \leq L$.
Note that $\fract{(q_T - n)\alpha}<1/2 \iff \fract{n\alpha}>1/2$. Thus, by Lemma~\ref{lem:Ostr_direction}, $k_0(n)$ and $k_0(q_T - n)$ must have distinct parities, and so in fact $k_0(q_T - n) \leq L-1$. 

Finally, since $k_0(n) = L$, Lemma~\ref{lem:Ost_largedigits}(\ref{it:dist_ub_N}) tells us that $\dist{n\alpha} < \dist{q_{L-1}\alpha} - \dist{q_{N+2}} \leq \dist{q_{L-1}\alpha} - \dist{q_T}$, and so
$\dist{(q_T - n)\alpha} < \dist{q_{L-1}\alpha}$.
Thus, we get from Lemma~\ref{lem:Ost_largedigits}(\ref{it:dist_lb}, \ref{it:dist_specialsmall})
that $q_T - n$ is of the shape $q_{L-1} + \sum_{k=L+2t}^{T-1} b'_k q_k$ with $t\geq 0$ and $b'_{L+2t}>0$.
\end{proof}

In preparation for our second lemma, we introduce some more notation:
For $M\geq 0$ let us define the ``small part of $n$'', obtained by discarding terms with indices larger than $M$:
\[
    n^{[\leq M]}
        := \sum_{k = 0}^M b_k(n) q_k.
\]
Note that, by \eqref{eq:greedy}, we have 
\begin{equation}\label{eq:nleqM-bound}
    n^{[\leq M]}<q_{M+1}.
\end{equation}
Similarly, we define the ``large part of $n$'':
\[
    n^{[\geq M]}
        := \sum_{k = M}^\infty b_k(n) q_k.
\]
Moreover, let 
\[
    k_{\geq M}(n) := \min \{k \colon k\geq M \text{ and } b_k(n) > 0\}.
\]
Note that this generalises our previous definition of $k_0(n) = k_{\geq 0}(n)$.

\begin{lemma}\label{lem:flip_parity}
Assume that $n^{[\leq M-1]}>0$ and $n^{[\geq M]}>0$.
Then if $q_{T-1} > n$, we have
\[
    k_{\geq M}(n) \equiv k_{\geq M}(q_T-n) \pmod{2}.
\]
\end{lemma}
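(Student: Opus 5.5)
The plan is to split $n$ into its two parts and compute the Ostrowski representation of $q_T-n$ block by block. Write $s := n^{[\leq M-1]}$ and $\ell := n^{[\geq M]}$, so $0<s<q_M$ by \eqref{eq:nleqM-bound}, $\ell>0$, and $L:=k_{\geq M}(n)=k_0(\ell)\geq M$. I would decompose
\[
    q_T - n = (q_M - s) + \bigl(q_T - q_M - \ell\bigr) =: s' + \ell'.
\]
The aim is to show that this decomposition is, up to a controlled correction, the splitting of $q_T-n$ into $(q_T-n)^{[\leq M-1]}$ and $(q_T-n)^{[\geq M]}$, and then to read off the parity of $k_0(\ell')$.

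First step: $0<s'\leq q_M-1$, so $s'$ has an Ostrowski representation using only indices $\leq M-1$, again by \eqref{eq:greedy}. Its top digit can be the maximal value $b_{M-1}=a_M$ only in special situations.

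Second step: $\ell'\geq 0$ is a multiple-of-large-convergents object. Since $q_{T-1}>n\geq \ell+1$, we get $\ell' > q_T-q_{T-1}-q_M\geq 0$, at least when $M<T-1$, which follows from $\ell<q_{T-1}$ and $\ell\geq q_M$. I would show $k_0(\ell')\geq M$ by working purely with the convergents of index $\geq M$. The sequences $q_M,q_{M+1},\dots$ satisfy the same recursion as a shifted continued fraction, namely that of $[0;a_{M+1},a_{M+2},\dots]$. The quantities $\ell$ and $q_T-q_M-\ell$ are then "integers" in the shifted Ostrowski system, after dividing out the structure below $M$. So $k_0(\ell')\geq M$ is automatic there.

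Third step, the parity: apply Lemma~\ref{lem:Ostr_direction} in the shifted system, or directly with $\absside{\cdot}$ chosen as in \eqref{eq:def_absside} for $\ell$. We have $\absside{\ell\alpha}<1/2$, and $\absside{q_M\alpha}$ has the sign pattern of parity $M$. Moreover $\absside{q_T\alpha}$ is negligible, since it is smaller than $\dist{q_{N+2}\alpha}$-type error terms as in Lemma~\ref{lem:Ost_largedigits}(\ref{it:dist_lb_N}). Comparing the side on which $\fract{\ell'\alpha}=\fract{(q_T-q_M-\ell)\alpha}$ lies, using the bounds of Lemma~\ref{lem:Ost_largedigits}(\ref{it:dist_lb}, \ref{it:dist_ub}) and the fact that $\dist{q_M\alpha}$ versus $\dist{\ell\alpha}$ is governed by whether $L=M$ or $L>M$, should give $k_0(\ell')\equiv L \pmod 2$. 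This splits into two cases. If $L\geq M+1$, then $\dist{\ell\alpha}<\dist{q_M\alpha}$ and $-q_M$ dominates. If $L=M$, then $b_M(\ell)\geq 1$ and one uses (\ref{it:dist_specialsmall}) and (\ref{it:dist_notspecial}), mirroring the proof of Lemma~\ref{lem:flip-simple}.

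The main obstacle is the junction between the two blocks: the concatenation of the digits of $s'$ and $\ell'$ need not be a legal Ostrowski representation. This happens exactly when $s'$ has top digit $b_{M-1}(s')=a_M$ while $b_M(\ell')>0$. In that case a carry/normalisation occurs: replacing $a_Mq_{M-1}+q_{M-2}$-type tails by $q_M$ may propagate upward and shift $k_{\geq M}$ by one. I would treat this by showing that in the bad case one can instead use the decomposition $(q_{M+1}-s-\dots)$. Concretely, I would push the borrow up by writing $q_T-n=(q_{M+1}-s-b_M(\ell)q_M)+(q_T-q_{M+1}-\ell^{[\geq M+1]})$, and check that the parity is preserved because carries in Ostrowski numeration move the lowest large index by two. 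This follows from $q_{k+1}=a_{k+1}q_k+q_{k-1}$ combined with the rule that a maximal digit forces a zero below it. Verifying this carry bookkeeping carefully is where I expect most of the work.
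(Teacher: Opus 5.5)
\textbf{There is a genuine gap: your argument covers only the case $L\equiv M+1 \pmod 2$, and the case $L\equiv M\pmod 2$ is not handled.}

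\textbf{What works.} Your decomposition $q_T-n=(q_M-s)+(q_T-q_M-\ell)$ is the one the paper uses, but it behaves as you claim only when $L=k_{\geq M}(n)\equiv M+1 \pmod 2$. That is the paper's Case~1. Choose $\absside{\cdot}$ with $\absside{q_{M+1}\alpha}<1/2$. Then the contributions of $-q_M$ and $-\ell$ partly cancel, and $0<\absside{\ell'\alpha}<\dist{q_M\alpha}$. Hence $k_0(\ell')\geq M$, and by parity $k_0(\ell')\geq M+1$. In particular $b_M(\ell')=0$, so the junction problem you anticipate never arises.

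\textbf{What fails.} When $L\equiv M \pmod 2$ (both $L=M$ and $L\geq M+2$), the two contributions reinforce each other. $\dist{\ell'\alpha}$ is essentially $\dist{q_M\alpha}+\dist{\ell\alpha}>\dist{q_M\alpha}$, and one checks that this forces $k_0(\ell')\leq M-1$. So in this case your Step~2 (``$k_0(\ell')\geq M$ is automatic'') is false, and so is the conclusion $k_0(\ell')\equiv L$ of Step~3. Integers with $k_0\geq M$ are not closed under subtraction, so the ``shifted Ostrowski system'' heuristic does not apply.

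Concretely, take $\alpha=(3-\sqrt5)/2$, so $q_k=1,2,3,5,8,13,\ldots$, with $M=1$, $n=6=q_3+q_0$ and $T=5$. Here $L=3\geq M+1$, which is your ``$-q_M$ dominates'' branch. Yet $\ell'=13-2-5=6=q_3+q_0$ has $k_0(\ell')=0<M$. Moreover $s'+\ell'=2q_0+q_3$ becomes the legal representation $q_1+q_3$ only after a carry caused by the low digit of $\ell'$ itself. That is not the junction condition you describe.

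\textbf{The fallback does not close the gap.} Your decomposition $(q_{M+1}-s-b_M(\ell)q_M)+(q_T-q_{M+1}-\ell^{[\geq M+1]})$ is Case~1 one level up, so it does settle $L\geq M+2$. For $L=M$, however, it would need $k_{\geq M+1}(n)\equiv M \pmod 2$. This need not hold, and it is not even defined when $n^{[\geq M+1]}=0$. Your claim that carries move the lowest large index by two is also unsupported. Ostrowski carries can spill downwards too, e.g.\ $2q_1=q_2+q_0$ for the $\alpha$ above.

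\textbf{What the paper does instead.} It avoids carry bookkeeping here by contradiction. Suppose $k_{\geq M}(n)\equiv M$ but $k_{\geq M}(q_T-n)\equiv M+1$.
\begin{itemize}
\item If $(q_T-n)^{[\leq M-1]}>0$, apply Case~1 to $q_T-n$ with $T+2$ in place of $T$. Since $q_{T+2}-(q_T-n)=n+a_{T+2}q_{T+1}$ has the same digits as $n$ below index $T+1$, this yields $k_{\geq M}(n)\equiv M+1$, a contradiction.
\item If $(q_T-n)^{[\leq M-1]}=0$, then $k_0(q_T-n)\geq M+1$, so $\dist{n\alpha}<\dist{q_M\alpha}+\dist{q_T\alpha}$. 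Lemma~\ref{lem:Ost_largedigits}(\ref{it:dist_lb_N}) then gives $k_0(n)\geq M$, contradicting $n^{[\leq M-1]}>0$.
\end{itemize}
This self-application of Case~1 through $x\mapsto q_{T+2}-x$ is the idea your proposal is missing.
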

\begin{proof}
Note that the assumption $n^{[\leq M-1]}>0$ implies that $M\geq 1$. Moreover, assume $n < q_{N+1}$ and $T\geq N+2$.

\caseI{1}
$k_{\geq M}(n) \equiv M+1 \pmod{2}$.
In particular, we have $b_M(n) = 0$ in this case, and we can write 
\[
    n = n^{[\leq M-1]} + n^{[\geq M+1]},
\]
and we have
\[
    k_0(n^{[\geq M+1]}) = k_{\geq M}(n) \equiv M+1 \pmod{2}.
\]
Let us write
\begin{align*}
    q_T - n
    = \underbrace{(q_T - n^{[\geq M+1]} - q_{M})}_{:= \nlarge'} + \underbrace{(q_{M} - n^{[\leq M-1]})}_{:= \nsmall'}.
\end{align*}
First, note that $0 < \nsmall' < q_M$. 
(This follows from the assumption $n^{[\leq M-1]} > 0$ and \eqref{eq:nleqM-bound}.)
Our goal is to show that $k_0(\nlarge')\geq M$ and  $k_0(\nlarge') \equiv M+1 \pmod{2}$.

Choose $\absside{\cdot} \in \{\absleft{\cdot}, \absright{\cdot}\}$ so that $\absside{q_{M+1}\alpha}<1/2$.
Then by the case assumption and Lemmas~\ref{lem:Ost_largedigits}(\ref{it:dist_ub_N}) and \ref{lem:Ostr_direction}, we have
\[
    \absside{n^{[\geq M+1]}\alpha} 
    = \dist{n^{[\geq M+1]}\alpha}  
    < \dist{q_M\alpha}- \dist{q_{N+2}\alpha} 
    \leq \dist{q_M\alpha}- \dist{q_{T}\alpha}.
\]
Since, moreover, $\dist{q_T\alpha} < \absside{n^{[\geq M+1]}\alpha}$, we get that 
\[
    \absside{(-q_T + n^{[\geq  M+1]})\alpha} 
    < \dist{q_M\alpha}
    = 1 - \absside{q_M\alpha}.
\]
This implies
\begin{align*}
    1 - \dist{q_M\alpha}
    =\absside{q_M\alpha}
    &< \absside{q_M\alpha} + \absside{(-q_T + n^{[\geq  M+1]})\alpha}\\
    &= \absside{(-q_T + n^{[\geq M+1]} + q_M)\alpha}<1,
\end{align*}
and so
\[
    \absside{\nlarge'\alpha}
    = \absside{(q_T - n^{[\geq M+1]} - q_M)\alpha}
    < \dist{q_M\alpha}.
\]
The fact that $\dist{\nlarge'\alpha} = \absside{\nlarge'\alpha} < \dist{q_M\alpha}$ implies that $k_0(\nlarge') \geq M$. 
Moreover, since $\absside{q_{M+1}\alpha}<1/2$, Lemma~\ref{lem:absside_parity} implies that $k_0(\nlarge') \equiv M+1 \pmod{2}$.

Overall, $q_T -n = \nlarge' + \nsmall'$ has the correct shape, namely $k_{\geq M}(q_T -n) \equiv M+1 \pmod{2}$.

\caseI{2}
$k_{\geq M}(n) \equiv M \pmod{2}$.
We want to show that $k_{\geq M}(q_T - n) \equiv M \pmod{2}$ as well.
Assume the contrary, i.e.,
\begin{equation*}
    k_{\geq M}(q_T-n) \equiv M+1 \pmod{2}.
\end{equation*}

If $(q_T-n)^{[\leq M-1]} >0$, then we can apply the result from Case~1 to $q_T -n $: for $n'' := q_{T+2} - (q_T-n) = n + (q_{T+2} - q_T)$ we have 
\begin{equation}\label{eq:contr_1}
    k_{\geq M} (n'') \equiv M+1 \pmod{2}.
\end{equation}
From $q_{T+2} - q_T = a_{T+2}q_{T+1}$ we see that $n''$ and $n$ have exactly the same digits up the digit with index $T+1$. Therefore, the congruence \eqref{eq:contr_1} contradicts the case assumption.

We are left with the case $(q_T-n)^{[\leq M-1]} =0$.
Then $k_0(q_T - n) \geq M+1$.
Lemma~\ref{lem:Ost_largedigits}(\ref{it:dist_ub}) implies 
$\dist{(q_T -n)\alpha} < \dist{q_{M}\alpha}$, and so $\dist{n\alpha} < \dist{q_M\alpha} + \dist{q_T \alpha} \leq \dist{q_M\alpha} + \dist{q_{N+2} \alpha}$. 
But then Lemma~\ref{lem:Ost_largedigits}(\ref{it:dist_lb_N}) implies that $k_0(n)\geq M$, which contradicts the assumption in the lemma that $n^{[\leq M-1]}>0$.
\end{proof}

We combine the previous two lemmas in the way we will want to apply in the next section.

\begin{lemma}\label{lem:flip-appl}
Let $\alpha \in (0,1/2)$ and $q_M<n < q_{N-1}$ for some $M\geq 1$.
Assume that $b_M(n)>0$, and that $n$ is not of the shape $n = q_M + \sum_{k=M + 1 + 2t}^N b_k q_k$ with $t\geq 0$ and $b_{M+1+2t}>0$.
Then for $T\geq N+2$ we have $k_{\geq M}(q_T-n) \equiv M \pmod{2}$.
\end{lemma}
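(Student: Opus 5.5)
Write $n = n^{[\leq M-1]} + n^{[\geq M]}$. Since $b_M(n)>0$, we have $n^{[\geq M]}>0$ and $k_{\geq M}(n) = M$. The plan is to split according to whether the small part $n^{[\leq M-1]}$ vanishes, and to reduce each case to one of the two preceding lemmas.

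\emph{Case $n^{[\leq M-1]}>0$.} All hypotheses of Lemma~\ref{lem:flip_parity} hold. We have $n < q_{N-1} \leq q_{T-1}$, since $T \geq N+2$. The lemma then gives directly
\[
k_{\geq M}(q_T - n) \equiv k_{\geq M}(n) = M \pmod{2}.
\]
No further work is needed here.

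\emph{Case $n^{[\leq M-1]}=0$.} Now $k_0(n) = M \geq 1$ and $q_M < n < q_{N-1} \leq q_{N+1}$. By assumption, $n$ is not of the special shape $q_M + \sum_{k=M+1+2t}^N b_k q_k$. So Lemma~\ref{lem:flip-simple}, applied with $L = M$, gives the representation
\[
q_T - n = q_{M-1} + \sum_{k = M+2t}^{T-1} b'_k q_k
\]
with $t\geq 0$ and $b'_{M+2t}>0$. We need that the first nonzero digit with index $\geq M$ sits at index $M+2t$. The term $q_{M-1}$ has index $M-1 < M$, so it does not contribute to $k_{\geq M}$. Hence $k_{\geq M}(q_T-n) = M+2t \equiv M \pmod 2$.

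The main thing to check is that the two lemmas' hypotheses truly match. In particular, in the second case the upper bound $N$ in Lemma~\ref{lem:flip-simple} must be such that $T\geq N+2$ is what we assume. Since $n<q_{N-1}<q_{N+1}$, this is fine; the hypothesis here is even stronger than needed. One should also confirm that the representation produced by Lemma~\ref{lem:flip-simple} is a valid Ostrowski representation. That lemma states this as its conclusion, so we may read off $b_{M+2t}(q_T-n) = b'_{M+2t}$, with digits vanishing in indices $M,\dots,M+2t-1$. I don't expect a real obstacle; the argument is a case split combining the two previous lemmas.
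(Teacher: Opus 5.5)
Your proposal is correct and matches the paper's proof: you split on whether $n^{[\leq M-1]}$ vanishes, use Lemma~\ref{lem:flip_parity} in the first case (with $n<q_{N-1}\leq q_{T-1}$), and use Lemma~\ref{lem:flip-simple} with $L=M$ in the second case, reading off $k_{\geq M}(q_T-n)=M+2t$. Your checks of the hypotheses are exactly the steps the paper leaves implicit: that $n<q_{N-1}<q_{N+1}$, and that the upper summation index in the excluded shape does not matter.
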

\begin{proof}
If $n^{[\leq M-1]} > 0$, then Lemma~\ref{lem:flip_parity} implies that 
$k_{\geq M}(q_T-n) \equiv k_{\geq M}(n) = M \pmod{2}$.
If $n^{[\leq M-1]} = 0$, then $k_0(n) = M$, and so Lemma~\ref{lem:flip-simple} implies that $k_{\geq M}(q_T-n) \equiv M \pmod{2}$.
\end{proof}

\section{Proof of the full characterisation}\label{sec:proof}

In this section, we prove our main result, namely the full characterisation of balanced rectangles of Sturmian sequences (Theorem~\ref{thm:bal_char_sturmian}).
As before, let $\alpha \in (0,1/2)$ be irrational and fix integers
$2\leq m \leq n$.
In view of Theorem~\ref{thm:def-equiv}, we can phrase Theorem~\ref{thm:bal_char_sturmian} in terms of balanced intervals. 
Moreover, as mentioned in Remark~\ref{rem:Ad_appl}, we can use Lemma~\ref{lem:bal_fg}, setting 
\begin{align*}
    B &:= \{ \xi_0, \xi_1, \ldots, \xi_{m-1} \}
    \quad \text{with} \quad
    \xi_\ell := \fract{\ell\alpha} \text{ for } 0 \leq \ell \leq m-1;\\
    \delta &:= \fract{n\alpha}.
\end{align*}
    
This means that we need to characterise the situations when $\fleft$ and $\fright$ from Definition~\ref{def:flr} are bijective (and we know from Lemma~\ref{lem:bal_fg} that one is bijective if and only if the other is bijective, so we can focus on either of them).

Recall that to describe $\fleft, \fright$, we need to find the closest points from the set $B$ on either side of $\xi_\ell + \delta \bmod 1 = \fract{
(n+\ell)\alpha}$. In other words, we need to subtract some positive integer $x$ from $n+\ell$, so that $n+\ell-x$ falls into $[0,m-1]$, and so that $\fract{x\alpha}$ causes a minimal shift to the left or to the right.

To formalise this, we define
\begin{align}
	\xleft(\ell) &:= n + \ell - \fleft(\ell), \label{eq:xleft_def} \\
	\xright(\ell) &:= n + \ell - \fright(\ell),\nonumber
\end{align} 
for $0\leq \ell \leq m-1$.
Note that since  $\fleft(\ell),\fright(\ell) \in [0, m-1]$, we have
\begin{equation}\label{eq:x_range}
	\xleft(\ell), \xright(\ell)
	\in [n+\ell-m+1, n+\ell]
	=: X(\ell)
\end{equation}
for $0\leq \ell \leq m-1$.

The next lemma captures the fact that $\fract{\xleft(\ell) \alpha}$ and $\fract{\xright(\ell) \alpha}$ must correspond to ``minimal shifts''.

\begin{lemma}\label{lem:x_min}
With the above definitions we have
\begin{align*}
	\fract{\xleft(\ell) \alpha} 
	&= \min_{x \in X(\ell)} \fract{x\alpha},\\
	\fract{\xright(\ell) \alpha} 
	&= \max_{x \in X(\ell)} \fract{x\alpha}.
\end{align*}
\end{lemma}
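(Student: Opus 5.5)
The plan is to unwind the definitions and use the change of variables $j = n+\ell - x$, which is a bijection between $j\in\{0,\dots,m-1\}$ and $x\in X(\ell)=[n+\ell-m+1,n+\ell]$. Since $\fract{n\alpha}+\fract{\ell\alpha}\equiv \fract{(n+\ell)\alpha}$ modulo $1$, the right endpoint $\xi_\ell+\delta$ of the interval is the point $\fract{(n+\ell)\alpha}$ on $\T$. For any $j$ we have, modulo $1$,
\[
    (\xi_\ell+\delta) - \xi_j \equiv (n+\ell-j)\alpha = x\alpha ,
\]
so the quantity minimised in Definition~\ref{def:flr} for $\fleft$, taken in $[0,1)$, is exactly $\fract{x\alpha}$ with $x = n+\ell-j$.

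For the first identity, $\fleft(\ell)$ is by definition the index $j$ minimising $\fract{(n+\ell-j)\alpha}$ over $0\le j\le m-1$. Substituting $x = n+\ell-j$, the value $\xleft(\ell)=n+\ell-\fleft(\ell)$ is therefore an $x\in X(\ell)$ minimising $\fract{x\alpha}$, which is the claim. Uniqueness of the minimiser, so that $\fleft$ is well defined, follows because $\fract{x\alpha}$ are pairwise distinct for distinct integers $x$ by irrationality of $\alpha$. I would note this in one line.

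For the second identity, $\fright(\ell)$ minimises $\xi_j-(\xi_\ell+\delta)$ modulo $1$, i.e.\ $\fract{-x\alpha}$. Since $x\ge n+\ell-m+1\ge 1$ (using $n\ge m$) and $\alpha$ is irrational, $x\alpha\notin\Z$, so $\fract{-x\alpha}=1-\fract{x\alpha}$. Minimising this is the same as maximising $\fract{x\alpha}$ over $X(\ell)$, which gives the formula for $\xright(\ell)$.

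There is no real obstacle here. The only point needing care is the non-integrality of $x\alpha$, which is needed both for the identity $\fract{-y}=1-\fract{y}$ and for uniqueness. This is exactly where $n\ge m$ enters, via Remark~\ref{rem:Ad_appl}, to exclude $x=0$ and hence $\delta$ coinciding with a point difference.
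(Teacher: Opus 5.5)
Your proposal is correct and follows the paper's proof. Both substitute $x = n+\ell-j$ in the minimisation from Definition~\ref{def:flr} and observe that the minimised quantity is exactly $\fract{x\alpha}$. Your explicit handling of the $\xright$ case via $\fract{-x\alpha} = 1-\fract{x\alpha}$ (valid because $x \geq n-m+1 \geq 1$) just fills in what the paper leaves as ``analogously''.
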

\begin{proof}
This follows directly from the definitions: for $\xleft(\ell)$ we have
\begin{align*}
	\fract{\xleft(\ell)\alpha}	&\stackrel{\eqref{eq:xleft_def}}{=} \fract{\ell\alpha + n\alpha - \fleft(\ell)\alpha}\\
						&\stackrel{\textnormal{Def.\  \ref{def:flr}}}{=} \min_{0\leq i \leq m-1} \fract{\ell\alpha + n\alpha - i\alpha}\\
	&= \min_{n+\ell-m+1 \leq x \leq n+\ell} \fract{x\alpha},
\end{align*}
and we can check the formula for $\fract{\xright(\ell) \alpha}$ analogously.
\end{proof}

\begin{rem}
Since $\fract{x\alpha}\neq \fract{y\alpha}$ for irrational $\alpha$ and integers $x\neq y$, Lemma~\ref{lem:x_min} gives us an alternative definition for the functions $\xleft,\xright$.
\end{rem}

Now we can phrase $\fleft,\fright$ being bijective in terms of $\xleft, \xright$.

\begin{lemma}\label{lem:x_bij}
Let $(\f,\x) = (\fleft,\xleft)$ or $( \fright, \xright)$.
Then $\f$ is bijective if and only if 
\[
	\x(\ell) - \ell
	\neq \x(\ell') - \ell'
\]
for all $0\leq \ell < \ell' \leq m-1$.
\end{lemma}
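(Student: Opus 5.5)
This is immediate from the defining identity \eqref{eq:xleft_def} (and its analogue for $\xright$). Solving it for $\f$ gives
\[
	\f(\ell) = n + \ell - \x(\ell), \qquad\text{that is,}\qquad \x(\ell) - \ell = n - \f(\ell),
\]
for every $0\leq \ell\leq m-1$. The map $\ell \mapsto \x(\ell)-\ell$ is therefore $\f$ composed with the injective affine map $j\mapsto n-j$. In particular, for $\ell\neq\ell'$ we have
\[
	\x(\ell)-\ell = \x(\ell')-\ell' \iff \f(\ell)=\f(\ell').
\]

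So the condition in the lemma, that $\x(\ell)-\ell\neq \x(\ell')-\ell'$ for all $\ell<\ell'$, says exactly that $\f$ is injective on $\{0,1,\ldots,m-1\}$. The target of $\f$ is also $\{0,\ldots,m-1\}$, by Definition~\ref{def:flr}, and this is a finite set. Hence injectivity of $\f$ is equivalent to its bijectivity, which proves the equivalence.

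The only point needing care is that $\f$ really is a well-defined map into $\{0,\ldots,m-1\}$, so that the finite-set argument applies. The minimiser in Definition~\ref{def:flr} is unique because the points $\fract{i\alpha}$ are distinct for irrational $\alpha$; equivalently, this is the uniqueness noted in the remark after Lemma~\ref{lem:x_min}. I do not expect any real obstacle here.
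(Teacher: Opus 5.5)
Your proposal is correct and follows essentially the same route as the paper: rewrite $\x(\ell)-\ell = n-\f(\ell)$ so that the condition is exactly injectivity of $\f$, which is equivalent to bijectivity since $\f$ maps the finite set $\{0,\ldots,m-1\}$ to itself. Your extra remark that $\f$ is well defined because the $\fract{i\alpha}$ are distinct is a harmless addition that the paper leaves implicit.
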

\begin{proof}
This follows directly from the definition: We have $\x(\ell) = n+\ell-\f(\ell)$, which gives us $\f(\ell) = n + \ell - \x(\ell)$, and so $\f(\ell) = \f(\ell') \iff \x(\ell) - \ell = \x(\ell')-\ell'$. Since $\f$ is a map on the finite set $\{0,1, \ldots, m-1\}$, it is  injective if and only if it is bijective.
Thus, $\f$ is bijective if and only if $\f(\ell)\neq \f(\ell')$ for all $\ell \neq \ell'$, and by the previous argument this is equivalent to $\x(\ell) - \ell \neq \x(\ell')-\ell'$ for all $\ell\neq \ell'$.
\end{proof}

A brief recap: We want to characterise all $2\leq m \leq n$ such that the $m\times n$ rectangles of the Sturmian words with slope $\alpha$ are balanced (prove Theorem~\ref{thm:bal_char_sturmian}).
The $m\times n$ rectangles of the Sturmian words with slope $\alpha$ being balanced is equivalent to the intervals of length $\fract{n\alpha}$ being balanced with respect to $(\alpha, m)$ (Theorem~\ref{thm:def-equiv}).
This again is equivalent to the function $\f = \fleft$ or $\f = \fright$ being bijective (Lemma~\ref{lem:bal_fg}).
And to decide whether $\f$ is bijective, we can use Lemma~\ref{lem:x_bij} and the corresponding function $\x= \xleft$ or $\x = \xright$, which can be defined via Lemma~\ref{lem:x_min}.
Indeed, this is our strategy.
In order to unify our arguments, recall that in the previous section we defined
\begin{equation*}
    \absleft{\xi} = \fract{\xi}
    \quad \text{and} \quad
    \absright{\cdot} = \fract{- \xi}.
\end{equation*}
In the rest of this section, we will always either set $(\f,\x,\absside{\cdot}) = (\fleft, \xleft, \absleft{\cdot})$ or $(\f,\x,\absside{\cdot}) = (\fright, \xright, \absright{\cdot})$.
In either case, we can now phrase Lemma~\ref{lem:x_min} as 
\[
	\absside{\x(\ell) \alpha}
    = \min_{x \in X(\ell)} \absside{x\alpha},
\]
where the range $X(\ell) = [n+\ell-m+1, n+\ell]$ was defined in \eqref{eq:x_range}.
In view of this and Lemma~\ref{lem:x_bij}, we are interested in integers $x$ in the range 
\[
    [n-m+1, n+m-1] = \bigcup_{0\leq \ell \leq m-1} X(\ell)
\]
that minimise $\absleft{x\alpha}$ or $\absright{x\alpha}$.

Let $\xleftstar, \xrightstar$ be the integers that minimise $\absleft{x\alpha}$ or $\absright{x\alpha}$, respectively, in the full range $[n-m+1, n+m-1]$. Again, according to the setting, we will write just $\xstar$ for $\xleftstar$ or $\xrightstar$.
In other words, in our notation we have
\[
    \absside{\xstar \alpha}
    = \min_{n-m+1 \leq x \leq n+m-1} \absside{x\alpha}.
\]

Note that the only integer that occurs in the range $X(\ell)= [n+\ell-m+1, n+\ell]$ for every $0\leq \ell \leq m-1$ is the integer $n$.
Therefore, the case $\xstar = n$ is particularly easy, and we start with this case.

\subsection{The case $n = \xstar$}

\begin{lemma}\label{lem:xstarequalsn_bij}
Let $(\f,\xstar) = (\fleft, \xleftstar)$ or $(\f,\xstar) = (\fright, \xrightstar)$.
If $\xstar = n$, then $\f$ is bijective. 
\end{lemma}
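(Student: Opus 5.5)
The plan is to show directly that $\x(\ell) = n$ for every $0 \leq \ell \leq m-1$, and then to read off bijectivity from Lemma~\ref{lem:x_bij}.

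\emph{Step 1.} The integer $n$ lies in every range $X(\ell) = [n+\ell-m+1, n+\ell]$, because $0 \leq \ell \leq m-1$. Each $X(\ell)$ is also contained in the full range $[n-m+1, n+m-1]$. By assumption $\xstar = n$, so
\[
    \absside{n\alpha} = \min_{n-m+1 \leq x \leq n+m-1} \absside{x\alpha} \leq \min_{x\in X(\ell)} \absside{x\alpha} \leq \absside{n\alpha}.
\]
Hence the minimum over $X(\ell)$ equals $\absside{n\alpha}$. Since $\alpha$ is irrational, the values $\fract{x\alpha}$ for distinct integers $x$ are distinct, so the minimiser is unique (see the remark after Lemma~\ref{lem:x_min}). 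Lemma~\ref{lem:x_min}, in the unified form $\absside{\x(\ell)\alpha} = \min_{x\in X(\ell)}\absside{x\alpha}$, therefore gives $\x(\ell) = n$ for all $\ell$.

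\emph{Step 2.} It follows that $\x(\ell) - \ell = n - \ell$, and these values are pairwise distinct for distinct $\ell$. By Lemma~\ref{lem:x_bij}, $\f$ is bijective. Equivalently, $\f(\ell) = n+\ell-\x(\ell) = \ell$, so $\f$ is the identity map.

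There is no real obstacle here. The only point that needs care is the uniqueness of the minimiser in Step~1, and the irrationality of $\alpha$ provides it. The argument also has to match the pairing of $\absleft{\cdot}$ with $\fleft,\xleft$ and of $\absright{\cdot}$ with $\fright,\xright$. For the right-hand case this means using the maximum of $\fract{x\alpha}$, which is the same as the minimum of $\absright{x\alpha}$.
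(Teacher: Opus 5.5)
Your proposal is correct and follows essentially the same route as the paper's proof. You observe that $n = \xstar$ lies in every $X(\ell)$, conclude $\x(\ell) = n$ for all $\ell$, and then apply Lemma~\ref{lem:x_bij} to the distinct values $n - \ell$; you just spell out the uniqueness-of-minimiser detail that the paper leaves implicit.
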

\begin{proof}
Since $\xstar = n \in X(\ell) = [n+\ell-m+1, n+\ell]$ for every $0\leq \ell \leq m-1$, we have in fact $\x(\ell) = \xstar$ for all $0\leq \ell \leq m-1$.
Thus, $\x(\ell) - \ell = \xstar - \ell \neq \xstar - \ell' = \x(\ell') - \ell'$ for all $\ell\neq \ell'$, and by Lemma~\ref{lem:x_bij} the function $\f$ is bijective. 
\end{proof}

We also characterise the case $\xstar = n$ in terms of the Ostrowski representations of $m,n$.

\begin{lemma}\label{lem:xstarequalsn_repr}
Let $2\leq m \leq n$ with $q_{M-1}<m\leq q_M$. 
We have $n\in \{\xleftstar, \xrightstar\}$ if and only if 
the Ostrowski representation of $n$ with respect to $\alpha$ is of one of the following two shapes:
\begin{enumerate}[label = (\alph*)]
\item\label{it:case_n_split} 
    $n = \sum_{k = L}^N b_k q_k$ with $L\geq M$;
\item\label{it:case_n_split2} 
$n = q_{M-1} + \sum_{k = M + 2t}^N b_k q_k$ with $t\geq 0$ and $b_{M+2t} \neq 0$.
\end{enumerate}
\end{lemma}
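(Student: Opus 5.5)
We rephrase the condition $n\in\{\xleftstar,\xrightstar\}$ first. Since $\fract{x\alpha}$ are pairwise distinct, $n=\xstar$ for the side $\absside{\cdot}$ means exactly that $\absside{n\alpha}<\absside{x\alpha}$ for all $x\neq n$ with $\abs{x-n}\le m-1$. If this holds for $\absleft{\cdot}$ or $\absright{\cdot}$, it holds for the side with $\absside{n\alpha}<1/2$: otherwise $\absside{n\alpha}>1/2$, and Lemma~\ref{lem:absside_larger12} gives $x=n\pm1$ with smaller value, which is allowed since $m\ge2$. So we fix $\absside{\cdot}$ as in \eqref{eq:def_absside} and prove
\[
n \text{ has shape (a) or (b)} \iff \absside{n\alpha}=\min\{\absside{x\alpha}\colon n-m<x<n+m\}.
\]

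\emph{Sufficiency.} If $n$ has shape (a), then $k_0(n)\ge M$. If it has shape (b), then $n=q_{M-1}+\sum_{k\ge M+2t}b_kq_k$. In both cases Lemma~\ref{lem:absside_qM} with $L=M$ says that $n$ minimises $\absside{x\alpha}$ on $(n-q_M,n+q_M)$. Since $m\le q_M$, this window contains $(n-m,n+m)$, so we are done.

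\emph{Necessity.} Suppose $n$ has neither shape; we produce an $x$ with $0<\abs{x-n}\le m-1$ and $\absside{x\alpha}<\absside{n\alpha}$. Let $L=k_0(n)$. Failing (a) means $L\le M-1$.

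\caseI{$L\le M-2$ with $L\ge1$} Lemma~\ref{lem:add_next_q} gives $\absside{(n+q_{L+1})\alpha}<\absside{n\alpha}$. Here $q_{L+1}\le q_{M-1}<m$, so $x=n+q_{L+1}$ works.

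\caseI{$L=0$} If $M\ge2$, Lemma~\ref{lem:add_next_q_small} gives $x=n\pm q_1$ with $q_1\le q_{M-1}<m$. If $M=1$, then $q_0<m\le q_1$. If $b_0(n)\ge2$, Lemma~\ref{lem:dist_pm1} gives $x=n\pm1$. If $b_0(n)=1$, then $n$ has shape (b) unless the next nonzero digit has the wrong parity, which needs a separate look. In that subcase I would argue via Lemma~\ref{lem:add_next_q} (the general version) or directly with Lemma~\ref{lem:Ost_largedigits}, so that $x=n\pm1$ or $x=n\pm q_1$, with $q_1<m$ if needed.

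\caseI{$L=M-1$, the main obstacle} Here $n$ is not of shape (b). Either $b_{M-1}(n)\ge2$, or $b_{M-1}(n)=1$ and the next nonzero digit has index $\equiv M+1\pmod 2$. The only shift available is by less than $m\le q_M$, so $q_M$ itself cannot be used.

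\begin{itemize}
\item If $M-1\ge1$, use Lemma~\ref{lem:Ost_largedigits}(\ref{it:dist_specialsmall}) to get $\dist{n\alpha}\ge\dist{q_{M-1}\alpha}$. Then subtracting $q_{M-1}<m$ should decrease $\absside{\cdot}$. When $b_{M-1}\ge2$, $n-q_{M-1}$ keeps the same $k_0$ with a smaller digit, so Lemma~\ref{lem:dist_smallestDig} applies. In the other subcase, $k_0(n-q_{M-1})$ has the opposite parity from $M-1$, and Lemma~\ref{lem:absside_parity} would seemingly give $\absside{(n-q_{M-1})\alpha}>1/2$. So the sign bookkeeping must be redone. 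I expect to use the identity $\absside{(n-q_{M-1})\alpha}=\absside{n\alpha}-\absside{q_{M-1}\alpha}$, together with $\dist{n\alpha}>\dist{q_{M-1}\alpha}$ from Lemma~\ref{lem:Ost_largedigits}(\ref{it:dist_notspecial}).
\item If $M-1=0$, handle this with the $L=0$ tools above.
\end{itemize}

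Checking the sides and parities carefully in this last case, and the $b_0=1$ edge case, is where I expect the work to lie.
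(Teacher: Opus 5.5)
\textbf{Gap: the case $M=1$, $b_0(n)=1$, next nonzero digit at an even index.} Your $L=M-1$ bullet sends this case back to the $L=0$ case, and the $L=0$ case leaves it open. The tools you name there do not work.

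Since $m\le q_1$ when $M=1$, any shift by $q_1$ falls outside the window $(n-m,n+m)$. This includes the shift produced by the general form of Lemma~\ref{lem:add_next_q}. So only $x=n\pm1$ are available, and which of the two works depends on the side of $1/2$. Because $k_0(n)=0$, Lemma~\ref{lem:Ostr_direction} does not apply to $n$, and $\fract{n\alpha}$ may lie on the ``wrong side'' of $1/2$.

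For example, take $\alpha=(3-\sqrt5)/2$, $m=2$ and $n=4=q_2+q_0$. Then $\fract{4\alpha}\approx0.528$, so $\absside{\cdot}=\absright{\cdot}$, and $\absright{3\alpha}\approx0.854>\absright{4\alpha}\approx0.472$. The only witness is $x=5$. Your subtraction identity also fails here, since $\absright{q_0\alpha}=1-\alpha>\absright{n\alpha}$.

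The missing step is to split by side:
\begin{itemize}
\item If $\fract{n\alpha}<1/2$, then $n-1$ has its first nonzero digit at an even index $\ge2$. So $\fract{(n-1)\alpha}<1/2$ by Lemma~\ref{lem:Ostr_direction}, and Lemma~\ref{lem:dist_smallestConv} gives $\absside{(n-1)\alpha}<\absside{n\alpha}$.
\item If $\fract{n\alpha}>1/2$, Lemma~\ref{lem:q0-wrongside-improve} gives $\fract{n\alpha}<\fract{(n+1)\alpha}$, i.e.\ $\absright{(n+1)\alpha}<\absright{n\alpha}$.
\end{itemize}
Both witnesses lie in the window since $m\ge2$. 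This is exactly how the paper handles the exceptional part of its Case~2.

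Apart from this, your proof is the paper's. You use the same reduction via Lemma~\ref{lem:absside_larger12}, the same sufficiency argument via Lemma~\ref{lem:absside_qM} with $L=M$, and the same witnesses $n+q_{L+1}$, $n\pm q_1$, $n\pm1$ and $n-q_{M-1}$.

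The obstacle you anticipate for $L=M-1\ge1$ does not exist; it comes from a parity slip. If $b_{M-1}(n)=1$ and the next nonzero digit has index $\equiv M+1\pmod 2$, then $k_0(n-q_{M-1})\equiv M-1\pmod 2$. That is the \emph{same} parity as $k_0(n)$, not the opposite one. So Lemma~\ref{lem:absside_parity} puts $n-q_{M-1}$ on the same side, and Lemma~\ref{lem:dist_smallestConv} finishes; this is the paper's argument.

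Your fallback is nevertheless valid for $M\ge2$. There Lemma~\ref{lem:absside_parity} gives $\absside{q_{M-1}\alpha}=\dist{q_{M-1}\alpha}$. Then Lemma~\ref{lem:Ost_largedigits}(\ref{it:dist_notspecial}) and the subtraction rule yield $\absside{(n-q_{M-1})\alpha}=\absside{n\alpha}-\dist{q_{M-1}\alpha}<\absside{n\alpha}$. This even covers the subcase $b_{M-1}(n)\ge2$ at the same time.
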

\begin{proof}
Let 
\[
    n = \sum_{k = L}^N b_k q_k
    \quad \text{with } b_L\neq 0.
\]
If $\absside{n\alpha} > 1/2$ for $\absside{\cdot} = \absleft{\cdot}$ or $\absside{\cdot} = \absright{\cdot}$, then, by Lemma~\ref{lem:absside_larger12}, one of $ \absside{(n-1)\alpha}, \absside{(n+1)\alpha}$ must be strictly smaller than $\absside{n\alpha}$. 
Since $n-1, n+1$ are both in the range $[n-m+1, n+m-1]$, we have $n\neq \xstar$. 
Therefore, it suffices to consider the choice of $\absside{\cdot} \in \{\absleft{\cdot}, \absright{\cdot}\}$ for which $\absside{n\alpha} < 1/2$, and check whether $\xstar = n$.
We systematically go through all possible representations of $n$.

\caseI{1}
Either $L\geq M$, or $L = M-1$ and $n = q_{M-1} + \sum_{k = M + 2t}^N b_k q_k$ with $t\geq 0$ and $b_{M+2t} \neq 0$. In other words, $n$ has the representation from \ref{it:case_n_split} or \ref{it:case_n_split2}. 
Then Lemma~\ref{lem:absside_qM} says that $n$ minimises $\absside{n\alpha}$ in the range $[n - q_M + 1, n+ q_M-1] \supseteq [n-m+1, n+m-1]$. Thus, we have indeed $n = \xstar$.

\caseI{2}
$L = M-1$ and $n = q_{M-1} + \sum_{k = M + 1 + 2t}^N b_k q_k$ with $t\geq 0$ and $b_{M+1+2t} \neq 0$.
Set $n_1 := \sum_{k = M + 1 + 2t}^N b_k q_k$.
By Lemma~\ref{lem:Ostr_direction} 
we have $\absside{n_1\alpha} < 1/2$ as well, except possibly if $M-1 = 0$. 
In the exceptional case (i.e., if $\fract{n\alpha}$ is ``on the wrong side of $1/2$''), we can apply Lemma~\ref{lem:q0-wrongside-improve} and obtain $\absside{(n+1)\alpha} < \absside{n\alpha}$.
Otherwise, if $\absside{n_1\alpha} < 1/2$, then since $q_{M-1}< m$, the integer $n_1$ is in the range $[n-m+1,n+m-1]$. By Lemma~\ref{lem:dist_smallestConv}, we have $\absside{n_1\alpha} < \absside{n\alpha}$, and so $n\neq \xstar$.

\caseI{3}
$L = M-1$ and $n = b_{M-1} q_{M-1} + \sum_{k = M}^N b_k q_k$ with $b_{M-1}\geq 2$.
If $L\geq 1$, we set $n_1 := n - q_{M-1} > n-m$. Then, by Lemma~\ref{lem:dist_smallestDig}, we have $\absside{n_1\alpha}< \absside{n\alpha}$, and so $n\neq \xstar$.
If $L = 0$, the same follows from Lemma~\ref{lem:dist_pm1}.

\caseI{4} 
$L\leq M-2$. If $L\geq 1$, we set $n_1 := n + q_{L+1} < n + m$. 
Then by Lemma~\ref{lem:add_next_q}, we have $\absside{n_1\alpha}< \absside{n\alpha}$, and so $n\neq \xstar$.
If $L = 0$, the same follows from Lemma~\ref{lem:add_next_q_small}.
\end{proof}

The two above lemmas give us a partial result towards the full characterisation in Theorem~\ref{thm:bal_char_sturmian}; note that the cases \ref{it:case_n_split} and \ref{it:case_n_split2} in Lemma~\ref{lem:xstarequalsn_repr} are almost the same as the cases \ref{it:orig_split1} and \ref{it:orig_split2} in Theorem~\ref{thm:bal_char_sturmian}.
Next, we provide some lemmas that will be useful in the situation where $\xstar \neq n$. 
We start with two lemmas which will, roughly speaking, allow us to assume $\xstar < n$ without loss of generality.

\subsection{Switching between $n$ and $q_T - n$}

\begin{lemma}\label{lem:qT-intervals}
Let $2\leq m\leq n$. Then, for sufficiently large $T$, the intervals of length $\fract{n\alpha}$ are balanced with respect to $(\alpha, m)$ if and only if the intervals of length $\fract{(q_T - n) \alpha}$ are balanced with respect to $(\alpha, m)$.
\end{lemma}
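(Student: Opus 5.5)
The plan is to combine the complement symmetry of Lemma~\ref{lem:alternative_intervals} with the smallness of $\dist{q_T\alpha}$. Write
\[
    \fract{(q_T - n)\alpha} = \fract{-n\alpha + q_T\alpha}.
\]
Since $n\alpha$ is not an integer, we have $\fract{-n\alpha} = 1-\fract{n\alpha}$. Moreover, $\dist{q_T\alpha}\to 0$ as $T\to\infty$ by \eqref{eq:qkalpha_easybound}, with sign $(-1)^T$ by \eqref{eq:delta_sign}. For $T$ large, $\dist{q_T\alpha}$ is smaller than both $\fract{n\alpha}$ and $1-\fract{n\alpha}$. Hence no wrap-around occurs, and we obtain
\[
    \fract{(q_T - n)\alpha} = 1 - \fract{n\alpha} + \eps_T,
    \qquad \eps_T := \delta_T = (-1)^T\dist{q_T\alpha}.
\]

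Next we verify the hypothesis of the second part of Lemma~\ref{lem:alternative_intervals}. The point set is $B=\{\fract{\ell\alpha}\colon 0\le \ell\le m-1\}$ and $\delta = \fract{n\alpha}$. We need $\delta\neq \xi_i-\xi_j$ modulo $1$ for all $i,j$. This holds because $n\ge m>\abs{i-j}$ and $\alpha$ is irrational, exactly as in Remark~\ref{rem:Ad_appl}. So there is a threshold
\[
    \eta := \min_{i,j}\abs{\delta-(\xi_i-\xi_j)} > 0
\]
(taken modulo $1$) such that for every $\abs{\eps}<\eta$, the intervals of length $\delta$ are balanced if and only if those of length $1-\delta+\eps$ are.

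Finally, choose $T$ large enough that $\abs{\eps_T}=\dist{q_T\alpha}<\eta$ and also $\dist{q_T\alpha}<\min\{\fract{n\alpha},1-\fract{n\alpha}\}$. Applying Lemma~\ref{lem:alternative_intervals} with $\eps=\eps_T$ then gives the equivalence: the intervals of length $\fract{n\alpha}$ are balanced with respect to $(\alpha,m)$ if and only if those of length $\fract{(q_T-n)\alpha}$ are.

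The only point needing any care is the fractional-part identity, i.e.\ ruling out wrap-around. This follows because $\fract{n\alpha}\in(0,1)$ is fixed while $\dist{q_T\alpha}\to0$. The argument applies to any $T$ beyond the threshold, whatever the sign of $\eps_T$. It does not require $q_T-n\ge m$; that condition is also automatic for large $T$, which matters only if later one wants to apply Lemma~\ref{lem:bal_fg} to $q_T-n$ in place of $n$.
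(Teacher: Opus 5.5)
Your proof is correct and follows the same route as the paper. The paper's proof is a one-liner: the lemma follows from Lemma~\ref{lem:alternative_intervals} together with $\dist{q_T\alpha}\to 0$. You make explicit the two details that one-liner leaves implicit: the identity $\fract{(q_T-n)\alpha} = 1-\fract{n\alpha}+\delta_T$ with no wrap-around for large $T$, and the check (as in Remark~\ref{rem:Ad_appl}) that $\fract{n\alpha}$ is never a difference of two points of $B$.
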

\begin{proof}
This follows directly from Lemma~\ref{lem:alternative_intervals} and the fact that $\dist{q_T\alpha}$ gets arbitrarily small for large $T$.
\end{proof}

\begin{lemma}\label{lem:-n_xstar}
Let $\xleftstar(m,n) = \xleftstar$ be defined as before, and define $\xrightstar(m,q_T -n)$ in an analogous way, i.e., $\xrightstar(m,q_T -n)$ minimises $\absright{x\alpha}$ in the range $[q_T - n -m+1, q_T - n + m -1]$.
Then for sufficiently large $T$ we have 
\[
    \xleftstar(m,n) > n \iff \xrightstar(m,q_T-n) < q_T - n.
\]
\end{lemma}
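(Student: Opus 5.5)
The plan is to relate the two minimisation problems by the reflection $x \mapsto q_T - x$. This map sends the range $[n-m+1, n+m-1]$ bijectively onto $[q_T-n-m+1, q_T-n+m-1]$, and it sends $n$ to $q_T-n$. It also reverses order, so $x>n$ if and only if $q_T-x<q_T-n$. It therefore suffices to show that, for $T$ large, $x$ minimises $\absleft{x\alpha}$ on the first range exactly when $q_T-x$ minimises $\absright{y\alpha}$ on the second range. Then $\xrightstar(m,q_T-n) = q_T - \xleftstar(m,n)$, and the claimed equivalence follows immediately.

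For the comparison, I would note that for any integer $x$ we have
\[
\absright{(q_T-x)\alpha} = \fract{(x-q_T)\alpha} = \fract{x\alpha - \delta_T},
\]
since $q_T\alpha \equiv \delta_T \pmod 1$. Here $\abs{\delta_T} = \dist{q_T\alpha} \to 0$ as $T\to\infty$. So $\absright{(q_T-x)\alpha}$ is $\absleft{x\alpha}$ shifted by the tiny constant $-\delta_T$, and the shift is the same for every $x$ in the range. If no $\fract{x\alpha}$ wraps around $0$ under this shift, the shift preserves the order of the values, and hence preserves which $x$ attains the minimum.

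To make this rigorous, I fix $m,n$. The finite set $\{\fract{x\alpha} : n-m+1\le x\le n+m-1\}$ consists of distinct points (as $\alpha$ is irrational), and none of them is $0$: indeed $x=0$ would need $n\le m-1$, contradicting $m\le n$. Let $\eta>0$ be the minimum of all pairwise gaps between these points and of their distances to $0$ and to $1$. I then choose $T$ so large that $\dist{q_T\alpha}<\eta$. With this choice, $\fract{x\alpha-\delta_T} = \fract{x\alpha}-\delta_T$ for every $x$ in the range, with no wrap-around, and the strict ordering among the values is unchanged. So the minimiser $\xleftstar(m,n)$ for $\absleft{\cdot}$ corresponds exactly to the minimiser $q_T-\xleftstar(m,n)$ for $\absright{\cdot}$ on the reflected range.

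The main (mild) obstacle is the wrap-around at $0$, that is, making sure no point $\fract{x\alpha}$ is so close to $0$ or $1$ that subtracting $\delta_T$ moves it to the other end of $[0,1)$. This is why $\eta$ includes the distances to $0$ and $1$, and why I use $n\ge m$ to exclude $x=0$ from the range. Both sign cases of $\delta_T$ are then covered at once.
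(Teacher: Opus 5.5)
Your proof is correct and follows the same route as the paper. The paper reflects $x\mapsto -x$, shifts by $q_T$, and notes that for large $T$ this does not change the ordering of the values $\absleft{x\alpha}$ on the fixed finite range, which gives $\xrightstar(m,q_T-n) = q_T - \xleftstar(m,n)$. Your explicit wrap-around check (using $n\ge m$ to keep $0$ out of the range) makes rigorous what the paper only asserts. It also sidesteps the paper's apparent typo of ``$\max$'' for ``$\min$''.
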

\begin{proof}
Note that since $\absleft{\xi} = \absright{-\xi}$, we have
\[
    \absright{-\xleftstar \alpha}
    = \absleft{\xleftstar \alpha}
    = \min_{n-m+1 \leq x \leq n+m-1} \absleft{x \alpha}
    = \min_{-(n+m-1) \leq x \leq -(n-m+1)} \absright{x\alpha}.
\]
If $q_T$ is sufficiently large, adding $q_T$ to every $x$ in the range $[-(n+m-1) , -(n-m+1)] = [-n-m+1, -n+m-1]$ does not change the ordering of the numbers $\absright{x\alpha}$, and so the above equation implies
\[
    \absright{(q_T - \xleftstar)\alpha}
    = \max_{q_T-n-m+1 \leq x \leq q_T-n+m-1} \absright{x\alpha}.
\]
This means that $q_T - \xleftstar = \xrightstar(m,q_T-n)$, and so 
$\xleftstar>n \iff \xrightstar(m,q_T-n) < q_T - n$.
\end{proof}

\subsection{Technical lemmas for the case $\xstar < n$}

If $\xstar < n$, then we do not have $\x(\ell) = \xstar$ for all $\ell \in [0,m-1]$. However, it is not hard to see that $\x(\ell) = \xstar$ for a certain range of $\ell$'s (which, admittedly, might be only the single integer $\ell = n-m+1$).
We can say more about the precise values of $\x(\ell)$ in adjacent ranges.
This is probably the most technical part of the paper; after that we will be able to prove (non-)balancedness in various cases.

Recall that we always implicitly assume either $(\f,\x,\absside{\cdot},\xstar) = (\fleft, \xleft, \absleft{\cdot},\xleftstar)$ or $(\f,\x,\absside{\cdot},\xstar) = (\fright, \xright, \absright{\cdot},\xrightstar)$.
For technical reasons, we assume $m\geq q_1$; the cases where $m$ is very small are actually quite easy and we will deal with them separately in Section~\ref{sec:very-small-m}.

\begin{lemma}\label{lem:x-ranges}
Assume $2\leq q_1 \leq m\leq n$ and $q_{M-1} < m \leq q_M$.
Moreover, assume that $\xstar < n$.
Then we have
\[
    \x(\ell) = \xstar 
    \quad \iff \quad 
    \ell \in [0, m - 1 + \xstar - n] 
    =: R_0 .
\]
If $\absside{q_{M-1}\alpha}<1/2$, then
\begin{align*}
    &\x(\ell) = \xstar + q_{M-1}\\
    &\iff \quad 
    \ell \in [m + \xstar - n,  \min\{m + \xstar - n -1 + q_{M-1}, m-1\}]
    =: R_1.
\end{align*}
If $\absside{q_{M}\alpha}<1/2$ and $m = q_M$, then
\begin{align*}
    &\x(\ell) = \xstar + q_{M}
    \quad \iff \quad 
    \ell \in [m + \xstar - n,  m-1]
    =: R_1'.
\end{align*}
If $\absside{q_{M}\alpha}<1/2$ and
$q_{M-2} + a q_{M-1} \leq m < q_{M-2} + (a+1) q_{M-1}$ 
with $0\leq a \leq a_M-1$,
then
\begin{align*}
    &\x(\ell) = \xstar + q_{M-2} + a q_{M-1}\\
    &\qquad \text{for all }
    \ell \in [m + \xstar - n,  \min\{ \xstar - n + q_{M-2} + (a+1) q_{M-1} - 1, m-1\}]
    =: R_1'';\\
    &\x(\ell) = \xstar + q_{M-2} + (a+1) q_{M-1}\\
    &\qquad \text{for all }
    \ell \in [\xstar - n + q_{M-2} + (a+1) q_{M-1}, \\
    &  \qquad \qquad \qquad \qquad \min\{ \xstar - n + q_{M-2} + (a+2) q_{M-1} - 1, m-1\}]
    =: R_2''.
\end{align*}
Note that the range $R_2''$ is empty if and only if $m-1 \leq  \xstar - n + q_{M-2} + (a+1) q_{M-1}-1$.
\end{lemma}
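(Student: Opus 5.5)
The plan is to read off $\x(\ell)$ directly from the characterisation $\absside{\x(\ell)\alpha}=\min_{x\in X(\ell)}\absside{x\alpha}$, with $X(\ell)=[n+\ell-m+1,n+\ell]$, which is Lemma~\ref{lem:x_min} in the $\absside{\cdot}$ notation. Every window $X(\ell)$ lies inside $[n-m+1,n+m-1]$.

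\emph{The range $R_0$.} Since $\xstar$ minimises $\absside{x\alpha}$ over the whole range $[n-m+1,n+m-1]$, we have $\x(\ell)=\xstar$ exactly when $\xstar\in X(\ell)$. Because $\xstar<n\le n+\ell$, this condition is just $n+\ell-m+1\le\xstar$, i.e.\ $\ell\le m-1+\xstar-n$, which gives $R_0$.

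\emph{Larger $\ell$.} For $\ell\ge m+\xstar-n$ the whole window lies strictly above $\xstar$. Let $n'$ be as in Lemma~\ref{lem:second_best} with $n$ replaced by $\xstar$. Every $x$ with $\xstar<x\le n+m-1$ satisfies $\absside{x\alpha}>\absside{\xstar\alpha}$, so $n'>n+m-1\ge n+\ell$. I would then apply Lemma~\ref{lem:second_best} with base point $\xstar$:
\begin{itemize}
\item $L=M-1$, $a=0$ for $R_1$;
\item $L=M$, $a=0$ for $R_1'$ (there $q_M=m$);
\item $L=M-2$ with the given $a$ for $R_1''$, and $a+1$ for $R_2''$.
\end{itemize}
The hypothesis $\absside{q_L\alpha}<1/2$ is exactly the assumed condition in each case. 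The assumption $m\ge q_1$ guarantees $M\ge 1$ (resp.\ $M\ge 2$ where $q_{M-2}$ appears), so the indices make sense. This yields that the candidate $c=\xstar+q_L+aq_{L+1}$ minimises $\absside{x\alpha}$ on $(\xstar,u)$, where $u=\min\{n',\xstar+q_L+(a+1)q_{L+1}\}$. Then $\x(\ell)=c$ follows once two conditions hold.
\begin{enumerate}
\item[(i)] $c\in X(\ell)$. The lower end $n+\ell-m+1\le c$ is exactly the stated right endpoint of the range. The upper end $c\le n+\ell$ follows from the left endpoint $\ell\ge m+\xstar-n$ together with $q_L+aq_{L+1}\le m$; this inequality is $q_{M-1}<m$, or $q_M=m$, or the assumed bound $q_{M-2}+aq_{M-1}\le m$, respectively.
\item[(ii)] $X(\ell)\subseteq(\xstar,u)$.
\end{enumerate}
For $R_2''$ I would use the same argument with $a+1$. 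The endpoint $\xstar-n+q_{M-2}+(a+1)q_{M-1}$ is exactly where $c_{a+1}$ enters the window. In this subrange $c_a$ is either still in the window, in which case $c_{a+1}$ beats it by \eqref{eq:semiconv_ineq}, or it has left. For the ``iff'' parts ($R_0,R_1,R_1'$), outside the range the candidate is not in $X(\ell)$ by (i), so $\x(\ell)\neq c$.

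\emph{Main obstacle.} The delicate step is (ii), namely $n+\ell<\xstar+q_L+(a+1)q_{L+1}$. Using $\ell\le m-1$ and $\xstar\ge n-m+1$, this reduces to $q_L+(a+1)q_{L+1}\ge 2m-1$, which need not hold for large $m$. Where the range is truncated by the first endpoint in the $\min$, the inequality holds automatically. Otherwise I would handle the remaining $x\in X(\ell)$ with $x\ge u$ separately. Such an $x$ satisfies $0<x-c<m\le q_M$, so a best-approximation argument in the spirit of Lemmas~\ref{lem:absside_qM} and~\ref{lem:absside_above} should show $\absside{x\alpha}>\absside{c\alpha}$. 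The input is that $\absside{(c-\xstar)\alpha}$ is the minimal one-sided distance for denominators below $q_L+(a+1)q_{L+1}$, by Lemma~\ref{lem:semiconv-best-approx}, and that $x-\xstar$ stays below $n'-\xstar$. This is the step I expect to require the most care.
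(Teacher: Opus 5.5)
Your route is the paper's: you read $\x(\ell)$ off Lemma~\ref{lem:x_min} and apply Lemma~\ref{lem:second_best} with base point $\xstar$, using $n'>n+m-1$, with $(L,a)=(M-1,0)$, $(M,0)$, $(M-2,a)$, $(M-2,a+1)$. The only problem is the step you call the main obstacle, which you have misanalysed. The right endpoint of each range is a minimum, so every $\ell$ in the range is bounded by the \emph{first} entry of that minimum, whichever entry is smaller. You never need the crude bound $\ell\le m-1$. With the right bound, (ii) is immediate:
\begin{itemize}
\item for $R_1$, $n+\ell\le \xstar+m-1+q_{M-1}<\xstar+q_{M-1}+q_M$, since $m\le q_M$;
\item for $R_1''$ and $R_2''$, the first entry gives exactly $n+\ell\le\xstar+q_{M-2}+(a+1)q_{M-1}-1$, resp.\ $n+\ell\le\xstar+q_{M-2}+(a+2)q_{M-1}-1$;
\item for $R_1'$, your own estimate $n+m-1\le\xstar+2q_M-2<\xstar+q_M+q_{M+1}$ already suffices.
\end{itemize}
So your ``otherwise'' case is empty. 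Relatedly, your claim in (i) that the lower-end condition ``is exactly the stated right endpoint'' holds only for $R_1$. For $R_1''$ and $R_2''$ the right endpoint is the last $\ell$ before the next candidate $c+q_{M-1}$ enters $X(\ell)$. So it comes from (ii), and (i) then follows because $q_{M-1}<m$.

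You should drop the fallback paragraph: it is not only unnecessary but false as stated. In the $R_1''$ setting, $x=c+q_{M-1}$ satisfies $0<x-c<m$. Yet $\absside{x\alpha}<\absside{c\alpha}$, by the addition rule for $\absside{\cdot}$ and \eqref{eq:semiconv_ineq}. This is exactly why the ranges are truncated where they are, and why $R_2''$ carries a different value.

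Two small points remain. First, for $R_2''$ with $a=a_M-1$, Lemma~\ref{lem:second_best} cannot be applied with $a+1=a_M$, since it requires $a\le a_{L+2}-1$. Instead use $q_{M-2}+a_Mq_{M-1}=q_M$ and apply the lemma with $L=M$, $a=0$, which covers an even larger range; the paper does this. Second, in the ``iff'' for $R_1$, the values $\ell\in R_0$ are excluded because $\x(\ell)=\xstar$ there. It is not because the candidate lies outside $X(\ell)$; for some of these $\ell$ it lies inside.
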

\begin{proof}
By the definitions of $\xstar$ and $\x(\cdot)$, we clearly have $\x(\ell) = \xstar$ if and only if $\xstar \in X(\ell) = [n+\ell-m+1, n+\ell]$. Under our assumption $\xstar < n$, this happens exactly for $\ell \in [0, m - 1 + \xstar - n]$. This settles the statement regarding the range $R_0$.
Now we go through all the cases from the statement of the lemma.

\caseI{1}
$\absside{q_{M-1}\alpha}<1/2$.

First, note that $\xstar + q_{M-1} \in X(\ell)= [n+\ell-m+1, n+\ell]$ if and only if $\ell \in [\xstar - n + q_{M-1}, m + \xstar - n -1 + q_{M-1}]$.
Since $\xstar-n+q_{M-1} \leq m + \xstar -n$, we have indeed $\xstar + q_{M-1} \in X(\ell)$ for all $\ell \in R_1 = [m + \xstar - n,  \min\{m + \xstar - n -1 + q_{M-1}, m-1\}]$ (and not for any larger $\ell$).
Now we only need to show that $\xstar + q_{M-1}$ indeed minimises $\absside{x\alpha}$ in each range $X(\ell)$ for $\ell \in R_1$.
In other words, we need to show that
\begin{equation}\label{eq:wts_min_M-1}
    \absside{(\xstar + q_{M-1})\alpha}
    = \min \{ \absside{x\alpha} \colon x \in [\xstar +1, \min\{m + \xstar -1 + q_{M-1}, n+ m-1\} ]\}.
\end{equation}
From the definition of $\xstar$ it follows that
\[
    x' := \min\{x > \xstar \colon \absside{x\alpha} < \absside{\xstar \alpha} \}
    > n+m-1.
\]
Moreover, note that $m + \xstar -1 + q_{M-1} < \xstar + q_{M-1} + q_M$.
Therefore, we get \eqref{eq:wts_min_M-1} directly from Lemma~\ref{lem:second_best} with $a = 0$. This settles the statement regarding the range $R_1$.

\caseI{2}
$\absside{q_{M}\alpha}<1/2$ and $m = q_M$

First, note that $\xstar+ q_M \in X(\ell) = [n+\ell-m+1, n+\ell]$ if and only if $\ell \in [\xstar - n + q_{M}, m + \xstar - n -1 + q_{M}] = [m + \xstar - n, 2m + \xstar - n -1]$.
Since $2m + \xstar - n -1 \geq m$, we have in fact $\xstar+ q_M \in X(\ell)$ for all $\ell \in R_1' = [m + \xstar - n, m-1]$.
Now we only need to show that $\xstar + q_M$ indeed minimises $\absside{x\alpha}$ in each range $X(\ell)$ for $\ell \in R_1'$.
Since the largest element in all these ranges is $n+m-1 \leq \xstar + 2m - 2 = \xstar + 2q_M - 2 < \xstar + q_M + q_{M+1}$,
this follows again directly from Lemma~\ref{lem:second_best} with $a=0$.

\caseI{3}
$\absside{q_{M}\alpha}<1/2$ and 
$q_{M-2} + a q_{M-1} \leq m < q_{M-2} + (a+1) q_{M-1}$ 
with $0\leq a \leq a_M-1$.

First, note that $\xstar + q_{M-2} + a q_{M-1} \in X(\ell)= [n+\ell-m+1, n+\ell]$ if and only if $\ell \in [\xstar - n + q_{M-2} + a q_{M-1}, m + \xstar - n -1 + q_{M-2} + a q_{M-1}]$.
On the one hand, we have $\xstar-n+q_{M-2} + a q_{M-1} \leq m + \xstar -n$. On the other hand, recall that in the lemma we assume $m>q_{M-1}$, and so 
$m + \xstar - n -1 + q_{M-2} + a q_{M-1} \geq \xstar - n + q_{M-2} + (a+1) q_{M-1} - 1$.
Thus, we have indeed $\xstar + q_{M-2} + a q_{M-1} \in X(\ell)$ for
$\ell \in R_1'' = [m + \xstar - n,  \min\{ \xstar - n + q_{M-2} + (a+1) q_{M-1} - 1, m-1\}]$.
We need to check that $\xstar + q_{M-2} + a q_{M-1}$ minimises $\absside{x \alpha}$ in the range $[\xstar + 1,  \xstar + q_{M-2} + (a+1) q_{M-1} - 1]$. Indeed, this follows directly from Lemma~\ref{lem:second_best}.

The proof for the range $R_2''$ is completely analogous, except for the following detail: if $a+1 = a_{M}$, then $q_{M-2} + (a+1) q_{M-1} = q_M$, and Lemma~\ref{lem:second_best} would allow us to make $R_2''$ even larger.
\end{proof}

In the case $\xstar < n$ we can now
use the above lemma to characterise $\f$ being bijective in terms of the shape of $\xstar$ and $m$.
Again, we assume $m\geq q_1$.

\begin{lemma}\label{lem:xstar-parity-bij}
Assume $2 \leq q_1 \leq m\leq n$ and $q_{M-1} < m \leq q_M$.
Moreover, assume that $\xstar < n$.
Then $\f$ is bijective in exactly the three following cases:
\begin{itemize}
\item $\absside{q_{M}\alpha}<1/2$ and $m = q_M$;
\item $\absside{q_{M}\alpha}<1/2$, $m = q_{M-2} + a q_{M-1}$ for some $1 \leq a \leq a_{M}-1$ and $M-2\geq 0$, and $\xstar \geq n - q_{M-1}$;
\item $\absside{q_{M}\alpha}<1/2$, 
$q_{M-2} + a q_{M-1}< m < q_{M-2} + (a+1) q_{M-1}$ for some $0 \leq a \leq a_M-1$ and
$\xstar = n - q_{M-1}$.
\end{itemize}
\end{lemma}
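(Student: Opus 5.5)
The plan is to use Lemma~\ref{lem:x_bij} through the equivalent formulation via $\f(\ell)=n+\ell-\x(\ell)$: $\f$ is bijective exactly when the values $\f(\ell)$ fill out $\{0,\dots,m-1\}$ with no repetition. On any block of consecutive $\ell$'s on which $\x$ is constant, say $\x(\ell)=\xstar+d$, the values $\f(\ell)=n+\ell-\xstar-d$ form a run of consecutive integers increasing by one. By Lemma~\ref{lem:x-ranges}, on $R_0=[0,m-1+\xstar-n]$ we have $\x=\xstar$, so $\f(R_0)=[n-\xstar,m-1]$. Hence $\f$ is bijective if and only if the remaining $n-\xstar$ indices $\ell\in[m+\xstar-n,m-1]$ are mapped bijectively onto $[0,n-\xstar-1]$. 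I would then go through the cases of Lemma~\ref{lem:x-ranges}. First I would note via \eqref{eq:qk_parity} that exactly one of $\absside{q_{M-1}\alpha}$, $\absside{q_M\alpha}$ is $<1/2$; for $M=0$ the assumption $q_1\le m$ excludes the degenerate case.

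\textbf{Case $\absside{q_{M-1}\alpha}<1/2$.} At $\ell_0=m+\xstar-n$ we have $\x(\ell_0)=\xstar+q_{M-1}$, so $\f(\ell_0)=m-q_{M-1}\ge 1$. This value lies in $[1,m-1]$. If $m-q_{M-1}\ge n-\xstar$, it is already in $\f(R_0)$. Otherwise the run on $R_1$ (of length $\min\{q_{M-1},n-\xstar\}$) either reaches a value $\ge n-\xstar$ or ends with all remaining indices exhausted without ever producing the value $0$. In both subcases, counting shows that $0$ is never attained, so $\f$ is not surjective. Thus $\f$ is not bijective, which matches the statement, where every listed case requires $\absside{q_M\alpha}<1/2$.

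\textbf{Case $m=q_M$ with $\absside{q_M\alpha}<1/2$.} On $R_1'$ we have $\x=\xstar+m$, so $\f(R_1')=[0,n-\xstar-1]$ and $\f$ is bijective.

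\textbf{Case $\absside{q_M\alpha}<1/2$ with $q_{M-2}+aq_{M-1}\le m<q_{M-2}+(a+1)q_{M-1}$.} Write $d_1=q_{M-2}+aq_{M-1}$. On $R_1''$ the values of $\f$ start at $m-d_1$ and increase, ending (if $R_1''$ is not truncated) at $q_{M-1}-1$. On $R_2''$ we have $\x=\xstar+d_1+q_{M-1}$, and the first index $\xstar-n+d_1+q_{M-1}$ gives $\f=0$, increasing from there.
\begin{itemize}
\item If $R_2''$ is empty, which is equivalent to $\xstar\ge n-q_{M-1}$, then bijectivity forces $m-d_1=0$. That means $m=q_{M-2}+aq_{M-1}$, with $a\ge1$ because $m>q_{M-1}$, and then it indeed holds. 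This gives the second bullet.
\item If $R_2''$ is nonempty, then $R_1''$ is untruncated and covers $[m-d_1,q_{M-1}-1]$, while $R_2''$ covers $[0,\cdot]$. We need $R_2''$ to stop exactly at $m-d_1-1$ with no indices left over. The length count then forces $n-\xstar=q_{M-1}$ and $m\neq d_1$, which is the third bullet.
\item If indices remain beyond $R_2''$, then $R_2''$ has full length $q_{M-1}$. Its values reach $q_{M-1}-1\ge m-d_1$, so they collide with $\f(R_1'')$ and $\f$ is not bijective.
\end{itemize}
Note that the equality case $\xstar=n-q_{M-1}$ with $m=d_1$ is already covered by the first bullet.

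The main obstacle is the bookkeeping in the last case: verifying the endpoints of $R_1''$ and $R_2''$ against $m-1$, and showing that leftover indices beyond $R_2''$ necessarily collide even though Lemma~\ref{lem:x-ranges} does not describe $\x$ there. The remedy, as above, is that the full run on $R_2''$ already produces a repeated value. Similar care is needed in the first case, where I must argue non-bijectivity without knowing $\x$ beyond $R_1$. There I would use only the values at $R_0$ and at the start of $R_1$, together with counting, to show that $0$ cannot be hit.
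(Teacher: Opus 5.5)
Your overall plan matches the paper's proof. You read off $\x$ on $R_0,R_1,R_1',R_1'',R_2''$ from Lemma~\ref{lem:x-ranges} and detect failure of bijectivity through runs of consecutive values of $\f(\ell)=n+\ell-\x(\ell)$; this repackages the paper's search for $\ell\neq\ell'$ with $\x(\ell')-\x(\ell)=\ell'-\ell$ (Lemma~\ref{lem:x_bij}), and the classification you reach is correct. However, two claims in your write-up are false and must be replaced.

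\textbf{Case $\absside{q_{M-1}\alpha}<1/2$.} You assert that $0$ is never attained, and plan to prove it from $R_0$ and the start of $R_1$ alone. Beyond $R_1$ this fails. Take $\alpha=[0;2,2,3,\ldots]$ (so $q_1=2$, $q_2=5$, $q_3=17$), $m=5$, $n=21$ and $\absside{\cdot}=\absright{\cdot}$. Then $\absright{q_1\alpha}<1/2$, and $\xstar=17$ by Lemma~\ref{lem:absside_qM}. For $\ell=3$ the range $X(3)=[20,24]$ is minimised at $x=24=\xstar+q_1+q_2=n+\ell$; this is because $\absright{(x-\xstar)\alpha}=\absright{x\alpha}-\absright{\xstar\alpha}$ there, and Lemma~\ref{lem:semiconv-best-approx} applies. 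Hence $\f(3)=0$.

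What works in every subcase is a collision, and it makes your subcase split unnecessary. The last value of the run on $R_1$ is $m-1$ if $|R_1|=q_{M-1}$. If $|R_1|=n-\xstar$, it is $m-q_{M-1}+n-\xstar-1\in[n-\xstar,m-1]$. Either way it lies in $\f(R_0)=[n-\xstar,m-1]$, so $\f$ is not injective. This is exactly the paper's pair with $\ell'-\ell=q_{M-1}$.

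\textbf{The final case.} $R_2''=\emptyset$ is not equivalent to $\xstar\ge n-q_{M-1}$. From the definition, $R_2''=\emptyset$ if and only if $n-\xstar\le q_{M-1}-(m-d_1)$, and this agrees with your condition only when $m=d_1$.

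Taken literally, your equivalence breaks your own argument in two places. It puts $\xstar=n-q_{M-1}$ with $m>d_1$ into your first sub-bullet, which yields ``not bijective''. It also forces $n-\xstar>q_{M-1}$ in your second sub-bullet, which contradicts the conclusion $n-\xstar=q_{M-1}$ you draw there.

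If you split on the correct emptiness criterion, your three sub-bullets are right and give exactly the second and third bullets of the statement. You should still add the converse in the second sub-bullet. For $n-\xstar=q_{M-1}$ and $m>d_1$ one has $R_1''=[m-q_{M-1},d_1-1]$ and $R_2''=[d_1,m-1]$. Their $\f$-images are $[m-d_1,q_{M-1}-1]$ and $[0,m-d_1-1]$, which together fill $[0,n-\xstar-1]$.
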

\begin{proof}
We distinguish between five cases, according to whether $\absside{q_{M}\alpha}<1/2$ or $\absside{q_{M-1}\alpha}<1/2$, and some extra conditions.

\caseI{1}
$\absside{q_{M-1}\alpha}<1/2$. 

\noindent Then, by Lemma~\ref{lem:x-ranges}, we have $\x(\ell) = \xstar$ for $\ell \in R_0$ and $\x(\ell) = \xstar + q_{M-1}$ for $\ell \in R_1$.
Since $m-1 \geq q_{M-1}$ and $m + \xstar - n -1 + q_{M-1} \geq q_{M-1}$, the range $R_0 \cup R_1$ contains at least $q_{M-1}+1$ consecutive integers.
Therefore, there exist $\ell \in R_0$ and $\ell' \in R_1$ such that $\ell'-\ell = q_{M-1}$. For these $\ell, \ell'$ we have $\x(\ell') - \x(\ell) = q_{M-1} = \ell' - \ell$, and so $\f$ is not bijective by Lemma~\ref{lem:x_bij}.

\caseI{2}
$\absside{q_{M}\alpha}<1/2$ and $m = q_M$. 

\noindent By Lemma~\ref{lem:x-ranges}, we have either $\x(\ell) = \xstar$ or $\x(\ell) = \xstar + q_M$ for all $\ell \in R_0 \cup R_1' = [0,m-1] = [0, q_M - 1]$. Therefore, it is clear that we cannot have $\ell-\ell' = \x(\ell) - \x(\ell')$ for $\ell\neq \ell' \in [0, m-1]$, and so $\f$ is bijective by Lemma~\ref{lem:x_bij}.

\caseI{3}
$\absside{q_{M}\alpha}<1/2$ and $m = q_{M-2} + a q_{M-1}$ with $1\leq a \leq a_M -1$ and $\xstar \geq n - q_{M-1}$.

\noindent Then, by Lemma~\ref{lem:x-ranges}, we have $\x(\ell) = \xstar$ for $\ell \in R_0$ and $\x(\ell) = \xstar + q_{M-2} + a q_{M-1}$ for $\ell \in R_1''$.
Moreover, since we are assuming $\xstar \geq n- q_{M-1}$, we have
\[
    \xstar - n + q_{M-2} + (a+1) q_{M-1} - 1
    \geq q_{M-2} + a q_{M-1} - 1
    = m-1.
\]
In other words, $R_0 \cup R_1'' = [0,m-1]$, and so for all $\ell \in [0, m-1] = [0, q_{M-2} + a q_{M-1} - 1]$ either $\x(\ell) = \xstar$ or $\x(\ell) = \xstar + q_{M-2} + a q_{M-1}$.
As in Case 2, it is clear that $\f$ is bijective.

\caseI{4}
$\absside{q_{M}\alpha}<1/2$ and $m = q_{M-2} + a q_{M-1}$ with $1\leq a \leq a_M-1$, but now $\xstar < n - q_{M-1}$.

\noindent Then 
\[
     \xstar - n + q_{M-2} + (a+1) q_{M-1} - 1
     < m-1, 
\]
and so there is at least one $\ell \in R_2''$.
Now since the range $R_1''$ contains exactly $q_{M-1}$ integers (note that $m = q_{M-2} + a q_{M-1}$), the range $R_1'' \cup R_2''$ contains at least $q_{M-1}+1$ consecutive integers.
Moreover, Lemma~\ref{lem:x-ranges} says that $\x(\ell) = \xstar + q_{M-2} + a q_{M-1}$ for $\ell \in R_1''$ and $\x(\ell) = \xstar + q_{M-2} + (a+1) q_{M-1}$ for $\ell \in R_2''$. Thus, there must exist $\ell \in R_1''$ and $\ell' \in R_2''$ with $\ell' - \ell = q_{M-1} = \x(\ell') - \x(\ell)$, and so $\f$ is not bijective.

\caseI{5} 
$\absside{q_{M}\alpha}<1/2$ and $q_{M-2} + a q_{M-1}< m < q_{M-2} + (a+1) q_{M-1}$ for some $0 \leq a \leq a_M-1$.

If $\xstar > n - q_{M-1}$, then one can check that $R_0 \cup R_1'' \supseteq [0, q_{M-2} + a q_{M-1}]$. Therefore, there must exist $\ell \in R_0$ and $\ell' \in R_1''$ with $\ell' - \ell = q_{M-2} + a q_{M-1} = \x(\ell') - \x(\ell)$. Thus, $\f$ is not bijective.

If $\xstar < n - q_{M-1}$, then one can check that the range $R_1'' \cup R_2''$ contains at least $q_{M-1} + 1$ consecutive integers, 
and that $R_1''$, $R_2''$ are non-empty,
and so, as in Case 4, $\f$ is not bijective.

If $\xstar = n - q_{M-1}$, then the first term in the minimum in the upper bound of $R_2''$ is
\[
   \xstar - n + q_{M-2} + (a+2) q_{M-1} - 1
    = q_{M+2} + (a+1) q_{M-1} - 1
    \geq m-1,
\]
so $R_0 \cup R_1'' \cup R_2'' = [0,m-1]$. Also, one can check that $R_0 \cup R_1'' \not\supseteq [0, q_{M-2} + a q_{M-1}]$, and that $R_1'' \cup R_2''$ contains at most $q_{M-1}$ consecutive integers.
Thus, $\x(\ell') - \x(\ell) \neq \ell' - \ell$ for all $\ell \in R_0$ and $\ell' \in R_1''$, as well as for all $\ell \in R_1''$ and $\ell' \in R_2''$.
It is also easy to see that $\x(\ell') - \x(\ell) \neq \ell' - \ell$ for all $\ell \in R_0$ and $\ell \in R_2''$ because $m-1 < q_{M-2} + (a+1) q_{M-1}$.
Thus, $\f$ is bijective.
\end{proof}

\subsection{The cases where $\xstar \neq n$}

We now collect some of our results to show that ``usually'' we are in the not balanced situation, which is a big step towards finishing the proof of Theorem~\ref{thm:bal_char_sturmian}.

First, we show that the third special case from Lemma~\ref{lem:xstar-parity-bij} actually corresponds to the case where $n \in \{\xleftstar, \xrightstar\}$.

\begin{lemma}\label{lem:actually-nstar}
Let $2 \leq q_1 \leq m\leq n$ and $q_{M-1} < m < q_M$.
Assume that $\absside{q_M \alpha}<1/2$ and $\xstar = n - q_{M-1}$.
Then $n \in \{\xleftstar, \xrightstar\}$.
\end{lemma}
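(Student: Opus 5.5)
The plan is to show that $n$ has one of the two shapes \ref{it:case_n_split} or \ref{it:case_n_split2} of Lemma~\ref{lem:xstarequalsn_repr}. That lemma then gives $n\in\{\xleftstar,\xrightstar\}$ directly. Set $y:=\xstar = n-q_{M-1}$, so $n = y + q_{M-1}$.

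\textbf{Step 1: $y$ is a two-sided minimiser.} By definition, $y$ minimises $\absside{x\alpha}$ on $[n-m+1,n+m-1] = [y+q_{M-1}-m+1,\, y+q_{M-1}+m-1]$. Since $q_{M-1}<m$, this range contains $[y+1, y+q_{M-1}+m-1]$ and reaches at least to $y - (m - q_{M-1}) + 1 \le y$ on the left. In particular, $\absside{y\alpha}$ is minimal on a window extending well to the right of $y$.

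\textbf{Step 2: determine the shape of $y$.} First, $\absside{y\alpha}<1/2$, for otherwise Lemma~\ref{lem:absside_larger12} would produce a smaller value at $y\pm1$, and both lie in the range (note $m\ge 2$). So $\absside{\cdot}$ is the side for which $\absside{y\alpha}=\dist{y\alpha}$. I will mimic Cases 2--4 of the proof of Lemma~\ref{lem:xstarequalsn_repr}, with $y$ in place of $n$, now using the asymmetric window. Let $L:=k_0(y)$.
\begin{itemize}
\item If $L\le M-2$, then $y+q_{L+1}\le y+q_{M-1}$ lies in the window. By Lemma~\ref{lem:add_next_q} it gives a smaller value; for $L=0$ we use Lemma~\ref{lem:add_next_q_small} and Lemma~\ref{lem:dist_pm1} instead, checking that the needed left step $y-1$ or $y-q_1$ is available, or using the other sign.
\item If $L=M-1$, then adding $q_M$ is not available on the right. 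However, $y+q_{M-1}=n$ is, and the sign condition $\absside{q_M\alpha}<1/2$ together with Lemma~\ref{lem:absside_parity} forces $\absside{q_{M-1}\alpha}>1/2$. This should let us compare $y$ with $y - q_{M-1}$ or $y+q_{M-1}$ via Lemma~\ref{lem:dist_smallestDig}.
\end{itemize}
The expected outcome is that $k_0(y)\ge M$ with $\absside{q_M\alpha}$ on the correct side, i.e.\ $k_0(y)\equiv M \pmod 2$. Then $n = q_{M-1} + y$ is a valid Ostrowski representation of shape \ref{it:case_n_split2}, provided $b_M(y)<a_{M+1}$. In the edge case where carries occur (for example $b_M(y)=a_{M+1}$), the representation of $n$ renormalises to something with $k_0(n)\ge M$, which is shape \ref{it:case_n_split}.

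\textbf{Step 3: conclude.} Once the shape of $n$ is established, invoke Lemma~\ref{lem:xstarequalsn_repr}. Alternatively, verify directly via Lemma~\ref{lem:absside_qM} that $n$ minimises $\absside{x\alpha}$ or its opposite on $[n-q_M+1,n+q_M-1]\supseteq[n-m+1,n+m-1]$. Note that the conclusion concerns the \emph{other} side: $n$ is a minimiser for the opposite one-sided distance, since $\absside{q_{M-1}\alpha}>1/2$. Consistently with this, $\abssideother{n\alpha} = \abssideother{y\alpha}+\abssideother{q_{M-1}\alpha}$ or the analogous subtraction should show $\abssideother{n\alpha}<1/2$.

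The main obstacle is Step 2: the window around $y$ is asymmetric, so I must check in each subcase that the competing integer (such as $y-q_{M-1}$ or $y-1$) actually lies in $[n-m+1,n+m-1]$. If a left move is unavailable, I would instead use a right move by the complementary convergent, appealing to Lemma~\ref{lem:second_best} and Lemma~\ref{lem:semiconv-best-approx}. The parity bookkeeping, which ensures $k_0(y)\equiv M$ rather than $M+1$, must use that $\absside{q_M\alpha}<1/2$ by hypothesis together with Lemma~\ref{lem:absside_parity}.
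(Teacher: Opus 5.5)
Your overall plan matches the paper's: show that $k_0(\xstar)\ge M$ with $k_0(\xstar)\equiv M \pmod 2$, read off that $n=q_{M-1}+\xstar$ has shape \ref{it:case_n_split2} (or \ref{it:case_n_split} after the upward carry you mention), and apply Lemma~\ref{lem:xstarequalsn_repr}. However, Step~2 has a genuine gap in the case $k_0(y)=M-1$. The comparison you propose does not yield a contradiction. If $b_{M-1}(y)<a_M$, Lemma~\ref{lem:dist_smallestDig} says that $n=y+q_{M-1}$, having the larger digit $b_{M-1}$, is \emph{worse} than $y$. And $y-q_{M-1}$ lies in $[n-m+1,n+m-1]$ only when $m\ge 2q_{M-1}+1$, which can fail. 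The missing observation is that this case cannot occur, by parity. You already have $\absside{y\alpha}<1/2$, and by hypothesis $\absside{q_M\alpha}<1/2$. So Lemma~\ref{lem:absside_parity}, which applies since $k_0(y)=M-1\ge 1$, forces $k_0(y)\equiv M \pmod 2$. In fact, even invoking Lemma~\ref{lem:dist_smallestDig} for $y$ would require $\absside{q_{M-1}\alpha}<1/2$, which you yourself note is false. The same one-line argument also rules out $k_0(y)\ge M$ with the wrong parity, which your sketch leaves to unspecified bookkeeping.

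A second, smaller issue is the asymmetry of the window, which you flag but do not resolve. When $m=q_{M-1}+1$ the window is $[y,\,y+2q_{M-1}]$, so no step to the left of $y$ is available. This breaks your treatment of $k_0(y)=0$ via $y-1$ or $y-q_1$ (Lemmas~\ref{lem:add_next_q_small} and~\ref{lem:dist_pm1}). It also breaks your justification of $\absside{y\alpha}<1/2$ via $y\pm1$; in the $\absleft{\cdot}$ case you can use $y+q_1$ instead. For all $k_0(y)\le M-2$, the clean fix, which is the paper's, is to add $q_{M-1}$ itself. The hypothesis gives $\absside{q_{M-1}\alpha}=1-\dist{q_{M-1}\alpha}$. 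Lemma~\ref{lem:Ost_largedigits}(\ref{it:dist_lb}) gives $\absside{y\alpha}=\dist{y\alpha}>\dist{q_{k_0(y)+1}\alpha}\ge\dist{q_{M-1}\alpha}$. Hence $\absside{n\alpha}=\absside{y\alpha}-\dist{q_{M-1}\alpha}<\absside{y\alpha}$; this is the general form of Lemma~\ref{lem:add_next_q} with $L=M-2$. Since $n$ is the centre of the window, this contradicts the minimality of $\xstar$ and needs no window checks. With these two repairs, your Step~3 (including the carry discussion) goes through.
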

\begin{proof}
The assumptions $q_1 \leq m < q_M$ imply $M\geq 2$.
The assumption $\absside{q_M\alpha}<1/2$ implies that $k_0(\xstar) \equiv M \pmod{2}$ (or possibly $k_0(\xstar) = 0$).

If $k_0(\xstar) \geq M$, then it follows from the basic properties of Ostrowski representations that
$n = q_{M-1} + \xstar$ has one of the two shapes \ref{it:case_n_split}, \ref{it:case_n_split2} from Lemma~\ref{lem:xstarequalsn_repr}, which implies $n \in \{\xleftstar, \xrightstar\}$.

If $k_0(\xstar) \leq M-2$, then Lemma~\ref{lem:add_next_q} implies that $\absside{n\alpha} = \absside{(\xstar + q_{M-1})\alpha} < \absside{\xstar \alpha}$, contradicting the definition of $\xstar$.
\end{proof}

\begin{rem}
In view of Lemma~\ref{lem:actually-nstar}, if we assume $n\notin\{\xleftstar, \xrightstar\}$, then Lemma~\ref{lem:xstar-parity-bij} now only gives us two cases where $\f$ is bijective, namely where $m$ is either a convergent or a semi-convergent (and some extra condition).
\end{rem}

\begin{lemma}\label{lem:notbal_genericcase}
Let $2\leq q_1 \leq m \leq n$.
Assume that $q_{M-1} < m < q_M$ and that $m$ is not a semi-convergent.
Moreover, assume that $n\notin \{ \xleftstar, \xrightstar\}$.
Then $\fleft,\fright$ are not bijective.
\end{lemma}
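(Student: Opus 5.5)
By Lemma~\ref{lem:bal_fg} it suffices to show that one of $\fleft,\fright$ is not bijective; then the other fails too. Since $n\notin\{\xleftstar,\xrightstar\}$, both $\xleftstar\neq n$ and $\xrightstar\neq n$. The plan is to find a choice of side $(\f,\x,\absside{\cdot},\xstar)$ with $\xstar<n$, and then apply Lemma~\ref{lem:xstar-parity-bij}.

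\emph{Step 1: arranging $\xstar<n$.} If $\xleftstar<n$ or $\xrightstar<n$, take that side. Otherwise $\xleftstar>n$ and $\xrightstar>n$. In that case replace $n$ by $n':=q_T-n$ for $T$ large.
\begin{itemize}
\item By Lemma~\ref{lem:qT-intervals}, Theorem~\ref{thm:def-equiv} and Lemma~\ref{lem:bal_fg}, bijectivity for $(m,n)$ is equivalent to bijectivity for $(m,n')$.
\item By Lemma~\ref{lem:-n_xstar}, $\xleftstar(m,n)>n$ gives $\xrightstar(m,n')<n'$. The symmetric version of that lemma, with left and right swapped, gives $\xleftstar(m,n')<n'$ as well.
\item We also need $n'\notin\{\xleftstar(m,n'),\xrightstar(m,n')\}$. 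This holds because the proof of Lemma~\ref{lem:-n_xstar} shows $q_T-\xleftstar(m,n)=\xrightstar(m,n')$, so $n'=\xrightstar(m,n')$ would force $n=\xleftstar(m,n)$.
\end{itemize}
Note that $m\leq n'$ holds for large $T$. So we may assume $\xstar<n$ on some side while still having $n\notin\{\xleftstar,\xrightstar\}$.

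\emph{Step 2: applying Lemma~\ref{lem:xstar-parity-bij}.} Suppose $\f$ were bijective on this side. The lemma then places us in one of its three cases.
\begin{itemize}
\item The first case requires $m=q_M$, which is excluded by the assumption $m<q_M$.
\item The second case requires $m=q_{M-2}+aq_{M-1}$ with $1\leq a\leq a_M-1$. Then $m$ is a semi-convergent denominator, which is excluded by hypothesis.
\item The third case requires $\absside{q_M\alpha}<1/2$, $q_{M-1}<m<q_M$ and $\xstar=n-q_{M-1}$. Lemma~\ref{lem:actually-nstar} then yields $n\in\{\xleftstar,\xrightstar\}$, contradicting the assumption.
\end{itemize}
One check is needed here: the range in the third case, $q_{M-2}+aq_{M-1}<m<q_{M-2}+(a+1)q_{M-1}$, together with $q_{M-1}<m$ is consistent with $m$ not being a semi-convergent. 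The boundary value $m=q_{M-2}+(a+1)q_{M-1}$ is either $q_M$ or a semi-convergent, so it is excluded by hypothesis in any case. Hence $\f$ is not bijective, and by Lemma~\ref{lem:bal_fg} neither is the other map.

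\emph{Main obstacle.} The delicate point is the $q_T-n$ swap in Step 1. One must verify that the condition $n\notin\{\xleftstar,\xrightstar\}$, the hypotheses on $m$, and $n'\geq m$ all transfer, and that Lemma~\ref{lem:actually-nstar} is applied to $n'$ rather than to $n$. The key tool is the identity $q_T-\xleftstar=\xrightstar(m,q_T-n)$ from the proof of Lemma~\ref{lem:-n_xstar}, used in both directions. One should also confirm that, for $T$ large, taking minimisers commutes with negation without ties, which holds by the irrationality of $\alpha$.
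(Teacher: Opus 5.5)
Your proposal is correct and follows the paper's proof. You reduce to a side with $\xstar<n$, passing to $q_T-n$ via Lemmas~\ref{lem:qT-intervals} and \ref{lem:-n_xstar} when needed, and then rule out the three bijective cases of Lemma~\ref{lem:xstar-parity-bij} using $m<q_M$, the non-semi-convergent hypothesis, and Lemma~\ref{lem:actually-nstar}. You also check explicitly, via $q_T-\xleftstar(m,n)=\xrightstar(m,q_T-n)$ and its mirror identity, that $q_T-n$ still avoids $\{\xleftstar,\xrightstar\}$. The paper leaves that step implicit, although it is needed for Lemma~\ref{lem:actually-nstar} to give a contradiction.
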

\begin{proof}
Recall from Lemma~\ref{lem:bal_fg} that $\fleft$ being bijective is equivalent to $\fright$ being bijective, which is equivalent to the intervals of length $\fract{n\alpha}$ with respect to $(\alpha, m)$ being balanced.
Moreover, recall from Lemma~\ref{lem:qT-intervals} that the intervals of length $\fract{n\alpha}$ are balanced if and only if the intervals of length $\fract{(q_T-n)\alpha}$ are balanced, for $T$ sufficiently large. 

Now, if $\xleftstar < n$, then Lemmas~\ref{lem:xstar-parity-bij} and \ref{lem:actually-nstar} imply that $\fleft$ is not bijective.

If $\xleftstar>n$, then by Lemma~\ref{lem:-n_xstar} we have $\xrightstar(m,q_T - n) < q_T - n$. Then again Lemmas~\ref{lem:xstar-parity-bij} and \ref{lem:actually-nstar} imply that the intervals of length $\fract{(q_T -n)\alpha}$ are not balanced with respect to $(\alpha, m)$, and so neither are those of length $\fract{n\alpha}$.
\end{proof}

In the last two lemmas we deal with the cases where $m$ is a (semi-)convergent.
Note that $n^{[\leq M]}$, $n^{[\geq M]}$, and $k_{\geq M}(n)$ were defined in Section~\ref{sec:Ostr_qT-n}.

\begin{lemma}\label{lem:m_eq_qM}
Let $2 \leq m = q_M \leq n$, and assume $n^{[\leq M-1]} > 0$.
Then $\fleft,\fright$ are bijective if and only if $k_{\geq M}(n) \equiv M \pmod{2}$. 
\end{lemma}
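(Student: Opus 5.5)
Since $m=q_M$, we have $q_{M-1}<m\le q_M$, and we want to apply Lemma~\ref{lem:xstar-parity-bij}. First, $n^{[\leq M-1]}>0$ rules out both shapes of Lemma~\ref{lem:xstarequalsn_repr}, where $m$ plays the role of $q_M$ there. Shape~\ref{it:case_n_split} requires $k_0(n)\ge M$. Shape~\ref{it:case_n_split2} requires $n=q_{M-1}+\dots$, and $q_{M-1}$ is a term with index $M-1\le M-1$, so this shape is not excluded outright. I will have to treat it separately.

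\emph{Case $n\notin\{\xleftstar,\xrightstar\}$.} I will work with $\fleft$ and $\xleftstar$ and distinguish the sign of $\xleftstar-n$.

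If $\xleftstar<n$, Lemma~\ref{lem:xstar-parity-bij} says $\fleft$ is bijective iff $\absleft{q_M\alpha}<1/2$, i.e.\ iff $M$ is even. (The other two bullets there need $m<q_M$.)

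If $\xleftstar>n$, I will pass to $q_T-n$ for large $T$, using Lemmas~\ref{lem:qT-intervals} and \ref{lem:-n_xstar}. These give $\xrightstar(m,q_T-n)<q_T-n$, so balance holds iff $\absright{q_M\alpha}<1/2$, i.e.\ iff $M$ is odd.

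It then remains to relate the sign of $\xleftstar-n$ to $k_{\geq M}(n)$, and this is the main obstacle. My plan is to show that $\xstar$ is essentially $n^{[\leq M-1]}$ shifted by a multiple of $q_M$. Write $n=n^{[\leq M-1]}+n^{[\geq M]}$. Lemma~\ref{lem:absside_qM} with $L=M$ says that $n^{[\geq M]}$ minimises its one-sided distance on a window of width $q_M$. Taking $\absside{\cdot}$ as in \eqref{eq:def_absside} for $n^{[\geq M]}$, the parity of $k_{\geq M}(n)=k_0(n^{[\geq M]})$ decides (via Lemma~\ref{lem:Ostr_direction}) whether $\absside{\cdot}$ is $\absleft{\cdot}$ or $\absright{\cdot}$.

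For that side, I expect the minimiser in $[n-q_M+1,n+q_M-1]$ to be the point $x$ with $x\equiv n-n^{[\le M-1]}$ relative to the lattice of $q_M$-steps. More precisely, $x=n-n^{[\leq M-1]}$ or $x=n-n^{[\leq M-1]}+q_M$, chosen so that it lies in the range; since $0<n^{[\le M-1]}<q_M$, exactly one choice is $<n$ and the other $>n$. Which of the two is the minimiser depends on whether $\absside{q_M\alpha}<1/2$, which is again a parity condition on $M$ relative to $k_{\geq M}(n)$. I will verify this using Lemmas~\ref{lem:dist_smallestConv}, \ref{lem:add_next_q}, and \ref{lem:second_best}.

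Combining this with the two bijectivity criteria, I expect both conditions to collapse to $k_{\geq M}(n)\equiv M \pmod 2$. As a cross-check, Lemma~\ref{lem:flip_parity} says $k_{\geq M}(q_T-n)\equiv k_{\geq M}(n)$, which is consistent with the switch to $q_T-n$ in the $\xleftstar>n$ case preserving the criterion.

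\emph{Edge case $n=q_{M-1}+\sum_{k\ge M+2t}b_kq_k$.} Here Lemma~\ref{lem:xstarequalsn_bij} gives bijectivity. I must also check the claimed parity: $k_{\geq M}(n)=M+2t\equiv M$, which matches. Similarly, if $n=q_{M-1}+\sum_{k\ge M+1+2t}$, then $n$ is not an $\xstar$ by Case~2 of Lemma~\ref{lem:xstarequalsn_repr}, so it falls under the generic case above. Its parity $M+1$ should come out non-bijective, and I will confirm this with the same computation.
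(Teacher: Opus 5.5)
Your reduction to Lemma~\ref{lem:xstar-parity-bij} is sound (only its first bullet applies when $m=q_M$), as are the switch to $q_T-n$ and your treatment of shape~\ref{it:case_n_split2} of Lemma~\ref{lem:xstarequalsn_repr}. The gap is in the step you call the main obstacle: the outcome you expect there is false. Let $\absside{\cdot}$ be the side with $\eta:=\absside{n^{[\geq M]}\alpha}<1/2$. Then $n^{[\geq M]}+q_M$ never beats $n^{[\geq M]}$. If $\absside{q_M\alpha}<1/2$, then $\absside{(n^{[\geq M]}+q_M)\alpha}=\eta+\dist{q_M\alpha}$. Otherwise Lemma~\ref{lem:absside_parity} gives $k_{\geq M}(n)\geq M+1$, so $\eta<\dist{q_M\alpha}$ and $\absside{(n^{[\geq M]}+q_M)\alpha}=1-(\dist{q_M\alpha}-\eta)>1/2$. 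In fact, on this side always $\xstar\leq n^{[\geq M]}<n$. So the sign of $\xstar-n$ carries no parity information; the parity enters only through the criterion $\absside{q_M\alpha}<1/2$. For instance, take $\alpha=(3-\sqrt5)/2$, $m=q_1=2$, $n=q_0+q_2=4$. The good side is the left one and $\absleft{q_1\alpha}=\fract{2\alpha}>1/2$, yet $\xleftstar=3=n^{[\geq 1]}<n$. For your two criteria for $\fleft$ to combine into $k_{\geq M}(n)\equiv M$, you need exactly that $\xleftstar<n$ if and only if $k_{\geq M}(n)$ is even, independently of $M$. A rule in which this sign depends on the parity of $k_{\geq M}(n)-M$ gives the wrong answer for one of the two parities of $M$. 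Also, when $k_{\geq M}(n)$ is odd, $\xleftstar$ is a minimiser on the ``bad'' side, and your plan gives no way to locate it. You do not need to: by Lemma~\ref{lem:bal_fg}, you can work with whichever of $\fleft,\fright$ belongs to the good side.

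What is actually needed is that $\absside{x\alpha}>\absside{n^{[\geq M]}\alpha}$ for all $x\in[n^{[\geq M]}+1,\,n+m-1]$. Since $n^{[\geq M]}\in[n-m+1,n-1]$, this gives $\xstar\leq n^{[\geq M]}<n$. Lemmas~\ref{lem:xstar-parity-bij} and \ref{lem:absside_parity} then finish the proof in one stroke, covering shape~\ref{it:case_n_split2} too and without passing to $q_T-n$. Lemma~\ref{lem:absside_qM} does not give this. It only reaches $n^{[\geq M]}+q_M-1$, and $[n^{[\geq M]}+q_M,\,n+q_M-1]$ is nonempty because $n^{[\leq M-1]}\geq 1$. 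The paper uses Lemma~\ref{lem:absside_above} instead:
\begin{itemize}
\item If $k_0(n^{[\geq M]})\geq M+1$, the upward window of length $q_M+q_{M+1}$ suffices.
\item If $k_0(n^{[\geq M]})=M$, the window $q_{M-1}+q_M$ suffices unless $n^{[\leq M-1]}>q_{M-1}$. In that case $b_{M-1}(n)>0$, which forces $b_M(n)\leq a_{M+1}-1$ by the digit rules, so the extended window $q_{M-1}+2q_M$ applies.
\end{itemize}
This digit-rule argument is the real content of the proof. The lemmas you plan to use for the verification (Lemmas~\ref{lem:dist_smallestConv}, \ref{lem:add_next_q}, \ref{lem:second_best}) do not supply it.
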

\begin{proof}
Note that $M\geq 1$.
Choose $\absside{\cdot} \in \{\absleft{\cdot}, \absright{\cdot}\}$ so that $\absside{n^{[\geq M]}\alpha}<1/2$.
We want to show that $\xstar < n$ because then we can finish with Lemma~\ref{lem:xstar-parity-bij}. In fact, we want to show that $\xstar \leq n^{[\geq M]}$.

First, note that $n^{[\geq M]} = n - n^{[\leq M-1]} \in [n-m+1,n-1]$ because $0 < n^{[\leq M-1]}<q_M = m$. 
Therefore, we want to show that for all $x \in [n^{[\geq M]}+1, n+m-1]$ we have $\absside{x\alpha}> \absside{n^{[\geq M]}\alpha}$.

Assume first that $k_0(n^{[\geq M]})\geq M+1$. Then Lemma~\ref{lem:absside_above} tells us that
$n^{[\geq M]}$ minimises $\absside{x\alpha}$ in the range $[n^{[\geq M]}, n^{[\geq M]} + q_{M} + q_{M+1} - 1]$.
The upper bound of this range is
\begin{align*}
    n^{[\geq M]} + q_{M} + q_{M+1} - 1
    \geq n-m+ 1 + q_{M} + q_{M+1} - 1
    = n + q_{M+1}
    > n+m-1.
\end{align*}
Therefore, we indeed have $\xstar \leq n^{[\geq M]}$.

Now assume $k_0(n^{[\geq M]}) =  M$.
By the same argument as before, Lemma~\ref{lem:absside_above} tells us that
$n^{[\geq M]}$ minimises $\absside{x\alpha}$ in the range $[n^{[\geq M]}, n^{[\geq M]} + q_{M-1} + q_{M} - 1]$.
If
\begin{equation}\label{eq:okcase}
    n^{[\geq M]} + q_{M-1} + q_{M} - 1
    \geq n+m-1,
\end{equation}
then, as before, we know that $n^{[\geq M]}$ minimises $\absside{x\alpha}$ in the range $[n^{[\geq M]}, n+m-1]$, and thus $\xstar \leq n^{[\geq M]}$.
If the inequality \eqref{eq:okcase} does not hold, then, after cancelling $q_M-1=m-1$, we get
\[
    n^{[\geq M]}+ q_{M-1} < n.
\]
Since $n^{[\leq M-1]} = n - n^{[\geq M]}$, this implies $n^{[\leq M-1]} > q_{M-1}$, and so by the property \eqref{eq:greedy} of Ostrowski representations, we have $b_{M-1}(n)>0$.
But then, by the digit rules for Ostrowski representations, we must have $b_M(n)\leq a_{M+1}-1$, and so we can use the stronger statement in Lemma~\ref{lem:absside_above}: In this case $n^{[\geq M]}$ minimises $\absside{x\alpha}$ in the range $[n^{[\geq M]}, n^{[\geq M]} + q_{M-1} + 2q_{M} - 1]$.
This now covers the full range $[n^{[\geq M]}+1, n+m-1]$, and thus $\xstar \leq n^{[\geq M]}$.

Overall, we have proven that $\xstar \leq n^{[\geq M]} < n$, and so Lemma~\ref{lem:xstar-parity-bij} says that $\f$ is bijective if and only if $\absside{q_M\alpha}< 1/2$, which is equivalent to $k_{\geq M}(n) \equiv M \pmod{2}$ by Lemma~\ref{lem:absside_parity}.
\end{proof}

\begin{lemma}\label{lem:m_eq_semikonv}
Let $m = q_{M-2} + a q_{M-1}$ with $1 \leq a \leq a_{M}-1$ and $M-2\geq 0$. Let $n\geq m$ and assume that $n$ is not of the shape \ref{it:case_n_split} or \ref{it:case_n_split2} from Lemma~\ref{lem:xstarequalsn_repr} (or, in other words, $n \notin \{\xleftstar, \xrightstar \}$).
Then $\fleft,\fright$ are bijective if and only if $k_{\geq M-1}(n) \equiv M \pmod{2}$. 
\end{lemma}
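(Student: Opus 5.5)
The plan follows the structure of the proof of Lemma~\ref{lem:m_eq_qM}. Since $n\notin\{\xleftstar,\xrightstar\}$, we have $\xstar\neq n$ in both settings. Lemma~\ref{lem:xstar-parity-bij} and Lemma~\ref{lem:actually-nstar} handle the case $\xstar<n$. For a semi-convergent $m=q_{M-2}+aq_{M-1}$, which satisfies $q_{M-1}<m<q_M$, bijectivity in that case holds if and only if
\[
\absside{q_M\alpha}<1/2 \quad\text{and}\quad \xstar\ge n-q_{M-1}.
\]
The first step is to show that with the choice of $\absside{\cdot}$ made below, the minimiser $\xstar$ in fact lies in $[n-q_{M-1},n-1]$. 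The case $m<q_1$ is excluded here and treated in Section~\ref{sec:very-small-m}. The natural candidate for $\xstar$ is $y:=n^{[\geq M-1]}$, and I would choose $\absside{\cdot}$ so that $\absside{y\alpha}<1/2$.

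Next I would show $y<n$, i.e.\ $n^{[\leq M-2]}>0$. If $n^{[\leq M-2]}=0$, then $k_0(n)\geq M-1$. If moreover $k_0(n)\geq M$, or $n=q_{M-1}+\sum_{k\ge M+2t}b_kq_k$, then $n$ has shape \ref{it:case_n_split} or \ref{it:case_n_split2}, which is excluded. The remaining possibilities are $b_{M-1}(n)\ge2$, or $b_{M-1}(n)=1$ followed by a first later nonzero digit of parity $M+1$. These I would treat directly. In these cases $\xstar$ is $n-q_{M-1}$ or an element of the form $n^{[\ge M]}$, and I expect to read off bijectivity from Lemma~\ref{lem:xstar-parity-bij} in the same way. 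Here I would have to check that the parity condition $k_{\ge M-1}(n)\equiv M$ comes out correctly. This is plausible: for $b_{M-1}\ge 2$ one gets $k_{\geq M-1}=M-1$, hence non-bijective.

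Generically $0<n^{[\le M-2]}<q_{M-1}$, so $y\in[n-q_{M-1}+1,n-1]$. To show $y$ minimises $\absside{x\alpha}$ on $[y,n+m-1]$, I would apply Lemma~\ref{lem:absside_above} with $L=M-1$. The range it gives is $[y,\,y+q_{M-2}+q_{M-1}-1]$, or $[y,\,y+q_{M-2}+2q_{M-1}-1]$ when $b_{M-1}(y)\le a_M-1$. This must cover $n+m-1\le y+q_{M-1}-1+q_{M-2}+aq_{M-1}$, which holds only for $a\le1$, or with the extended range, $a\le 1$ again. \textbf{This is the main obstacle}: for larger $a$ the range in Lemma~\ref{lem:absside_above} is too short. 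I would first try Lemma~\ref{lem:second_best} and the semi-convergent best approximation Lemma~\ref{lem:semiconv-best-approx}. The aim is an improved analogue of Lemma~\ref{lem:absside_above}: if $b_{M-1}(y)\le a_M-a$, then $y$ minimises on a range of length $q_{M-2}+(a+1)q_{M-1}$, using $\dist{y\alpha}<\dist{(q_{M-2}+aq_{M-1})\alpha}$. In the remaining cases I would argue that $\xstar$ is some $y+jq_{M-1}$ still below $n$, or fall back on Lemma~\ref{lem:-n_xstar}.

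Once $\xstar\in[n-q_{M-1},n-1]$ is established, Lemma~\ref{lem:xstar-parity-bij} shows that $\f$ is bijective if and only if $\absside{q_M\alpha}<1/2$. By Lemma~\ref{lem:absside_parity}, with our choice $\absside{y\alpha}<1/2$, this holds if and only if $k_0(y)=k_{\geq M-1}(n)\equiv M \pmod 2$. The case $\xstar>n$ would be reduced via Lemmas~\ref{lem:qT-intervals} and \ref{lem:-n_xstar} to $q_T-n$. The parity invariance of Lemma~\ref{lem:flip_parity}, applied with index $M-1$, then transfers the condition back to $n$.
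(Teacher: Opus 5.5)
Your plan is sound when $b_{M-1}(n)=0$. Its central claim fails, however, when $b_{M-1}(n)\ge 1$, and that is exactly the subcase where the lemma asserts non-bijectivity. There $y=n^{[\geq M-1]}$ has $k_0(y)=M-1$, so your chosen side is the one with $\absside{q_{M-1}\alpha}<1/2$. On that side $y$ is generally not the minimiser:
for $b_{M-1}(n)\ge2$, the point $y-q_{M-1}\le n-q_{M-1}$ has a smaller value (Lemma~\ref{lem:dist_smallestDig}) and may lie in the window;
for $b_{M-1}(n)>a_M-a$ and $n^{[\leq M-2]}>0$, the point $y+q_{M-2}+(a_M-b_{M-1}(n)+1)q_{M-1}$ lies in the window above $n$ and also beats $y$.

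Concretely, take $\alpha=[0;2,5,3,1,1,\ldots]$ (so $q_0=1$, $q_1=2$, $q_2=11$), $m=q_0+4q_1=9$ and $n=q_0+2q_1+q_2=16$. You get $y=15$ and the side $\absright{\cdot}$. But $\absright{13\alpha}\approx0.063$ and $\absright{24\alpha}\approx0.039$ are both below $\absright{15\alpha}\approx0.149$, so $\xrightstar=24>n$, while $\xleftstar=11<n-q_1$.

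So the ``main obstacle'' you flag is not a weakness of Lemma~\ref{lem:absside_above} that a sharper range would fix. The claim $\xstar\in[n-q_{M-1},n-1]$ is simply false there, and your improved analogue can only cover $b_{M-1}(n)\le a_M-a$. The fallback does not close the gap either. Lemma~\ref{lem:-n_xstar} swaps left and right, so for $q_T-n$ you land on the side with $\absside{q_M\alpha}<1/2$. There Lemma~\ref{lem:xstar-parity-bij} gives non-bijectivity only if you additionally prove $\xstar<(q_T-n)-q_{M-1}$. Lemma~\ref{lem:flip_parity} transports the parity of $k_{\geq M-1}$, not the position of $\xstar$. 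Re-running your analysis on $q_T-n$ from its own preferred side can again put $\xstar$ above the number, so the reduction is circular.

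The missing idea, which is how the paper argues, is not to locate $\xstar$ at all when $b_{M-1}(n)\ge1$. First arrange that $\xstar<n$ on some side, via $q_T-n$, Lemma~\ref{lem:flip-appl} and the case $b_{M-1}(n)=0$. The only bijective configuration left by Lemma~\ref{lem:xstar-parity-bij} is then $\absside{q_M\alpha}<1/2$ with $n-q_{M-1}\le\xstar<n$, i.e.\ $k_0(\xstar)\equiv M\pmod 2$. This is ruled out directly:
if $k_0(\xstar)\ge M$, then together with $0<n-\xstar\le q_{M-1}$ this would force $b_{M-1}(n)=0$ or $n$ of shape~\ref{it:case_n_split2};
if $k_0(\xstar)\le M-2$, Lemma~\ref{lem:add_next_q} gives a strictly smaller value at $\xstar+q_{M-1}$, which lies in the window.

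In the case $b_{M-1}(n)=0$, your obstacle disappears if you use $L=M$ instead of $L=M-1$. Since $m\le q_M-q_{M-1}$ and $y>n-q_{M-1}$, the whole window $[n-m+1,n+m-1]$ lies in $(y-q_M,y+q_M)$, so Lemma~\ref{lem:absside_qM} gives $\xstar=y$ outright. This also covers the part of the window below $y$. Your plan never checks that part, but it is needed for $\xstar\ge n-q_{M-1}$ in the bijective direction.
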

\begin{proof}
Let $m,n$ be as in the lemma. We distinguish between two cases according to whether $q_{M-1}$ shows up in the representation of $n$ or not.

\caseI{1}
$n = n^{[\leq M-2]} + n^{[\geq M]}$.

Since $n^{[\leq M-2]} < q_{M-1}<m$, 
we have $n^{[\geq M]} \in [n-m+1, n+m-1]$.

Moreover, since $m\leq q_M - q_{M-1}$ and $n^{[\geq M]} > n-q_{M-1}$,
we have $[n-m+1, n+m-1]\subseteq [n^{[\geq M]} - q_M + 1, n^{[\geq M]} + q_M - 1]$.
Thus, Lemma~\ref{lem:absside_qM} implies that $n^{[\geq M]} = \xstar$ for the appropriate choice of $\xstar \in \{\xleftstar, \xrightstar\}$.
In particular, we have $\xstar < n$. 
Now Lemma~\ref{lem:xstar-parity-bij} says that $\f$ is bijective if and only if
$\absside{q_M\alpha}< 1/2$, which is equivalent to $k_{\geq M-1}(n) = k_0(n^{[\geq M]}) \equiv M \pmod{2}$, by Lemma~\ref{lem:absside_parity}.

\caseI{2}
$n = n^{[\leq M-2]} + b_{M-1} q_{M-1} + n^{[\geq M]}$ with $1\leq b_{M-1} \leq a_M$.

We want to show that $\fleft,\fright$ are not bijective.

Our first goal is to show that we may assume that one of $\xleftstar, \xrightstar$ is smaller than $n$.
If $\xleftstar<n$, this is of course the case.
Assume now that $\xleftstar >n$. Then Lemma~\ref{lem:-n_xstar} says that if we consider $q_T - n$ instead of $n$ for sufficiently large $T$, 
we get $\xrightstar(m, q_T-n)<q_T - n$. 
Lemma~\ref{lem:qT-intervals} guarantees that the balancedness property doesn't change if we replace $n$ by $q_T - n$.
Lemma~\ref{lem:flip-appl} implies (note that by assumption $n$ is not of the shape \ref{it:case_n_split2}) that $k_{\geq M-1}(q_T - n) \equiv M-1 \pmod{2}$.
If $q_{M-1}$ does not show up in the representation of $q_T - n$, then we know from Case 1 that we are in the not balanced situation.
If $q_{M-1}$ shows up in the representation of $q_T - n$, then $q_T - n$ has the same shape (the shape of Case 2) as $n$, and we can replace $q_T - n$ by $n$, knowing that now the new $\xrightstar$ is smaller than the new $n$.

Overall, in Case 2, we may now assume that at least one of $\xleftstar, \xrightstar$ is smaller than $n$. We fix this $\xstar < n$, and our goal is to show that $\f$ is not bijective. 

In order to apply Lemma~\ref{lem:xstar-parity-bij} we use the fact that $\absside{q_M\alpha}<1/2$ if and only if $k_0(\xstar) \equiv M \pmod{2}$. This is guaranteed by Lemma~\ref{lem:absside_parity}, unless $k_0(\xstar) = 0$.
In the exceptional case we still have $\absside{q_0\alpha}<1/2$. (To see this, assume the contrary and use Lemma~\ref{lem:q0-wrongside-improve} and the assumption $\xstar < n$ to obtain a contradiction.)

Now, if  $k_0(\xstar) \equiv M+1 \pmod{2}$, i.e., $\absside{q_{M-1}\alpha}<1/2$, then 
Lemma~\ref{lem:xstar-parity-bij} says that $\f$ is not bijective, as desired.

If $k_0(\xstar) \equiv M \pmod{2}$, i.e., $\absside{q_{M}\alpha}<1/2$, and if $\xstar < n - q_{M-1}$, then Lemma~\ref{lem:xstar-parity-bij} again says that $\f$ is not bijective.

We are left with the case  $n - q_{M-1} \leq \xstar < n$ and $k_0(\xstar) \equiv M \pmod{2}$.
We need to show that this is impossible for $n = n^{[\leq M-2]} + b_{M-1} q_{M-1} + n^{[\geq M]}$ with $1\leq b_{M-1} \leq a_M$ (and $n$ not of the shape \ref{it:case_n_split2}).
The shape of $n$ and the assumptions $n - q_{M-1} \leq \xstar < n$ and $k_0(\xstar) \equiv M \pmod{2}$ imply that  $k_0(\xstar)\geq M$ is impossible. Thus, we must have $k_0(\xstar)\leq M-2$.
But then, $\absside{(\xstar+q_{M-1})\alpha} < \absside{\xstar \alpha}$ by Lemma~\ref{lem:add_next_q}. Since $\xstar + q_{M-1} \in [n - m+1, n+m-1]$, this contradicts the fact that $\xstar$ minimises $\absside{x\alpha}$ in the range $[n - m+1, n+m-1]$. 
\end{proof}

\subsection{Very small $m$}\label{sec:very-small-m}

In the previous two subsections we assumed $m\geq q_1$ for technical reasons.
The cases where $2 \leq m < q_1$ are actually quite easy and we deal with them by going back to the original definition of balanced intervals.

\begin{lemma}\label{lem:m-very-small}
Let $2\leq m\leq n$ with $m < q_1$.
Then the intervals of length $\fract{n\alpha}$ are balanced with respect to $\{ 0, \alpha, \ldots, (m-1)\alpha\}$
if and only if either $n$ is either of the shape $n = \sum_{k=1}^N b_k q_k$ or $n = q_0 + \sum_{k=1+2t}^N b_k q_k$ with $t\geq 0$ and $b_{1+2t}\neq 0$.
\end{lemma}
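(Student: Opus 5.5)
Since $m<q_1=a_1$ and $\alpha<1/q_1$, the points $0,\alpha,2\alpha,\ldots,(m-1)\alpha$ are not reduced modulo $1$. They form an arithmetic progression of $m$ points with common gap $\alpha$ inside $[0,(m-1)\alpha]\subset[0,1-\alpha)$, followed by one large gap of length $1-(m-1)\alpha>\alpha$. I will work directly with the definition of balance, or equivalently with Lemma~\ref{lem:bal_fg}, for this explicit configuration. Write $\delta=\fract{n\alpha}$; by Remark~\ref{rem:Ad_appl}, $\delta$ is not a difference of two points of $B$. The plan is to count the points in $[\xi,\xi+\delta)$ as $\xi$ varies and to pin down exactly which $\delta$ make this count take only two values.

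\textbf{Short intervals.} First suppose $\delta<\alpha$. Every interval then contains at most one point, and some interval contains none because the large gap exceeds $\alpha$. So the counts lie in $\{0,1\}$ and the intervals are balanced. By the first statement of Lemma~\ref{lem:alternative_intervals} (complements), the same holds when $1-\delta<\alpha$. Now suppose $\alpha<\delta<1-\alpha$. Then some interval contains at least two points, namely one starting at $0$, since $\delta>\alpha$. It remains to find an interval with no point, placed inside the large gap; for this I need $\delta<1-(m-1)\alpha$. If instead $\delta\ge 1-(m-1)\alpha$, I pass to the complement length $1-\delta$, which lies in $(\alpha,(m-1)\alpha]$. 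An interval of that length starting just after a point contains at most $m-2$ points, whereas one starting at $0$ contains at least two. Translating back to length $\delta$, one interval contains at least $m-(m-2)=2$ points and another contains at most $m-2$. This is not yet decisive, so a cleaner route is needed: use Lemma~\ref{lem:bal_fg} directly. For $\ell$ with $\ell\alpha+\delta$ lying strictly inside the progression range, $\fleft(\ell)$ and $\fright(\ell)$ behave like $\ell+c$, that is, like a shift. Bijectivity can then fail only at the wrap-around, and I will check that it fails precisely when $\alpha<\dist{\delta}$. Concretely, take $\fright$ with $\delta\in(\alpha,1-\alpha)$. The value $\fright(m-1)$ and the value $\fright(\ell_0)$, for $\ell_0$ the last index with $\ell_0\alpha+\delta<1$ (with wrapping accounted for), both equal $0$ when $\delta>\alpha$. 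I expect this collision to be straightforward once the two cases are written out: either the endpoint $\ell\alpha+\delta$ lands inside the big gap, or it does not. In summary, balance holds if and only if $\dist{n\alpha}<\alpha=\dist{q_0\alpha}$.

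\textbf{Translation to Ostrowski digits.} By Lemma~\ref{lem:Ost_largedigits}(\ref{it:dist_specialsmall}), applied with $L=0$ and combined with (\ref{it:dist_ub}) for $L\ge1$ (which gives $\dist{n\alpha}<\dist{q_0\alpha}$ whenever $k_0(n)\ge1$), the condition $\dist{n\alpha}<\dist{q_0\alpha}$ holds exactly when either $k_0(n)\ge1$, which is the shape $n=\sum_{k=1}^Nb_kq_k$, or $n=q_0+\sum_{k=1+2t}^Nb_kq_k$ with $b_{1+2t}\neq0$. For the remaining $n$ with $k_0(n)=0$, namely those with $b_0\ge2$ or with the wrong parity, (\ref{it:dist_specialsmall}) gives $\dist{n\alpha}>\dist{q_0\alpha}$. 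This completes the equivalence.

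\textbf{Main obstacle.} The delicate step is the non-balanced direction for $\alpha<\dist{\delta}$: I must exhibit two intervals whose counts differ by $2$, or a collision of $\fright$, uniformly in how $\delta$ compares with $(m-1)\alpha$. I will try the collision argument via Lemma~\ref{lem:bal_fg} first, because it avoids splitting into cases on the size of $\delta$.
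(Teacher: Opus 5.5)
Your balanced direction ($\delta<\alpha$, or $1-\delta<\alpha$ via complements) matches the paper, and so does your translation of $\dist{n\alpha}<\alpha$ into the two Ostrowski shapes via Lemma~\ref{lem:Ost_largedigits}. The gap is in the non-balanced direction for $\alpha<\delta<1-\alpha$. Your empty-interval argument settles only $\delta<1-(m-1)\alpha$, and, applied to complements, $\delta>(m-1)\alpha$. The collision you propose for what remains does not hold:
\begin{itemize}
\item If $\delta<1-(m-1)\alpha$, then $\ell_0=m-1$. So $\fright(m-1)$ and $\fright(\ell_0)$ are one value, not two colliding ones.
\item If $\delta>1-(m-1)\alpha$, which is exactly the range you left open, the endpoint $(m-1)\alpha+\delta$ wraps around to $(m-1)\alpha+\delta-1\in(0,(m-1)\alpha)$. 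Hence $\fright(m-1)=\ceil{((m-1)\alpha+\delta-1)/\alpha}\geq 1$, not $0$.
\end{itemize}
So $\fright(m-1)=\fright(\ell_0)=0$ never produces a collision, and the hard case remains unproved.

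The missing ingredient is that the large gap has length $1-(m-1)\alpha>2\alpha$, because $(m+1)\alpha\le q_1\alpha<1$. You only record that it exceeds $\alpha$, which is not enough. For $m=q_1$ the wrap-around gap also exceeds $\alpha$, yet balanced lengths with $\dist{\delta}>\alpha$ can occur.

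With the stronger bound, the correct collision is between $\ell_0-1$ and $\ell_0$:
\begin{itemize}
\item $\ell_0\ge 1$, since $\alpha+\delta<1$.
\item $(\ell_0-1)\alpha+\delta>(m-1)\alpha$. If $\ell_0=m-1$ this is just $\delta>\alpha$. Otherwise $(\ell_0+1)\alpha+\delta\ge 1$ gives $(\ell_0-1)\alpha+\delta\ge 1-2\alpha>(m-1)\alpha$.
\item Hence both endpoints lie in $((m-1)\alpha,1)$, so $\fright(\ell_0-1)=\fright(\ell_0)=0$.
\end{itemize}

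The paper avoids the case split by comparing two explicit intervals. The interval $[0,\delta)$ contains $\min\{m,\ceil{\delta/\alpha}\}$ points. Any point $j\alpha$ of $[1-\delta,1)$ has $j\ge 2$, since $1-\delta>\alpha$, and lies in $[1-\delta,1-2\alpha)$, since $j\le m-1$. So $[1-\delta,1)$ contains at most $\min\{m,\ceil{\delta/\alpha}\}-2$ points.
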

\begin{proof}
From the basic properties of continued fractions we know that $\alpha < 1/q_1$, and so we have $0<\alpha < 2\alpha < \dots < (q_1-1)\alpha < q_1\alpha < 1$.

If $n$ is of one of the two special shapes from the lemma, then we know from Lemma~\ref{lem:Ost_largedigits}(\ref{it:dist_ub}, \ref{it:dist_specialsmall}) that $\dist{n\alpha} < \alpha$, and so the intervals of length $\dist{n\alpha}$ (which are either exactly the intervals of length $\fract{n\alpha}$ or their complements) each contain either no points or one point. In particular, the intervals are balanced.

If $n$ is not of one of the two special shapes, then we know from Lemma~\ref{lem:Ost_largedigits}(\ref{it:dist_specialsmall}) that $1/2 > \dist{n\alpha} > \alpha$, and so $\fract{n\alpha} > \alpha$. 

By simply counting points, we see that the interval $[0,\fract{n\alpha})$ contains exactly $\ceil{\fract{n\alpha}/\alpha} \geq 2$ points.

On the other hand, since $(q_1-1)\alpha < q_1\alpha < 1$ and $m-1 \leq q_1 - 2$, we see that the interval $[1-\fract{n\alpha},0)$ contains at most $\ceil{\fract{n\alpha}/\alpha}-2$ points.
Thus, the intervals are not balanced.
\end{proof}

\subsection{Finishing the proof of the full characterisation} 

We have now essentially proved Theorem~\ref{thm:bal_char_sturmian}, and we summarise the arguments below.

\begin{proof}[Proof of Theorem ~\ref{thm:bal_char_sturmian}]
Recall the four cases from Theorem ~\ref{thm:bal_char_sturmian}:
\begin{enumerate}[label = (\roman*)]
    \item\label{it:split1} 
    $m = \sum_{k = 0}^M b_k q_k$ and 
    $n = \sum_{k = M+1}^N b_k q_k$;
    \item\label{it:split2} 
    $m = \sum_{k = 0}^M b_k q_k$ with $b_M \neq 0$, and 
    $n = q_M + \sum_{k = M + 1 + 2t}^N b_k q_k$ with $t\geq 0$ and $b_{M+1+2t} \neq 0$.
    \item\label{it:conv} 
    $m = q_M$ and $n = \sum_{k = 0}^{M-1} b_k q_k + \sum_{k = M+2t}^{N} b_k q_k$ with $t\geq 0$ and $b_{M+2t}\neq 0$;
    \item\label{it:semiconv} 
    $m = q_{M-1} + a q_{M} $ with $1 \leq a \leq a_{M+1}-1$ and 
    $n = \sum_{k = 0}^{M-1} b_k q_k + \sum_{k = M+1+2t}^{N} b_k q_k$ with $t\geq 0$ and $b_{M+1+2t}\neq 0$.
\end{enumerate}
Note that some of the cases overlap; for example if the small parts of $n$ in \ref{it:conv} or \ref{it:semiconv} are zero, then we are also in the case \ref{it:split1}. (This was done for readability in Theorem~\ref{thm:bal_char_sturmian}.)
We need to show that $m,n$ are of the shape of at least one of the cases if and only if the $m\times n$ rectangles of the Sturmian words with slope $\alpha$ are balanced. 
By Theorem~\ref{thm:def-equiv} and Lemma~\ref{lem:bal_fg}, this is equivalent to $\fleft, \fright$ being bijective (and we know that one function is bijective if and only if the other is bijective). 

We first check that if $m,n$ are of the shape of one of the cases \ref{it:split1}--\ref{it:semiconv}, then $\fleft, \fright$ are bijective.
In the cases \ref{it:split1}, \ref{it:split2} we have $m < q_{M+1}$, 
and so Lemma~\ref{lem:xstarequalsn_repr} implies that $n\in \{\xleftstar, \xrightstar\}$, and Lemma~\ref{lem:xstarequalsn_bij} implies that the corresponding function $\f$ is bijective.
If we are in the case \ref{it:conv} and $n^{[\leq M-1]}=0$, then again Lemmas~\ref{lem:xstarequalsn_repr} and \ref{lem:xstarequalsn_bij} imply that $\fleft,\fright$ are bijective. 
If we are in the case \ref{it:conv} and $n^{[\leq M-1]} > 0$, this is provided by Lemma~\ref{lem:m_eq_qM}.
If we are in the case \ref{it:semiconv} and not in one of the previous cases, then Lemma~\ref{lem:m_eq_semikonv} says that $\fleft,\fright$ are bijective.

For the implication in the other direction, assume that $2\leq m \leq n$ and that $\fleft,\fright$ are bijective.

If $n\in \{\xleftstar, \xrightstar\}$, then Lemma~\ref{lem:xstarequalsn_repr} and the property \eqref{eq:greedy} imply that $m,n$ are of the shape \ref{it:split1} or \ref{it:split2}, or $m = q_M$.
If $m$ is a convergent, then Lemma~\ref{lem:m_eq_qM} implies that $m,n$ are of the shape \ref{it:conv} or \ref{it:split1}.

If $n \neq \xleftstar, \xrightstar$, then Lemma~\ref{lem:notbal_genericcase} implies that $m$ must either be a convergent or a semi convergent, or $m<q_1$. If $m$ is a convergent, then as before Lemma~\ref{lem:m_eq_qM} implies that $m,n$ are of the shape \ref{it:conv} or \ref{it:split1}. If $m$ is a semi-convergent, then Lemma~\ref{lem:m_eq_semikonv} implies that $m,n$ are of the shape \ref{it:semiconv} or \ref{it:split1} or \ref{it:split2}.
Finally, if $m<q_1$, Lemma~\ref{lem:m-very-small} implies that $m,n$ are of the shape \ref{it:split1} or \ref{it:split2}.
\end{proof}

\section{Acknowledgements}

I want to thank Jeffrey Shallit for helpful discussions and for providing several automata which hugely helped to guess the characterisation in Theorem~\ref{thm:bal_char_sturmian}.
I am also grateful to Manuel Hauke for helpful discussions, and to Benjamin Ward and Victor Beresnevich for their support in moments of despair.

\bibliographystyle{habbrv}
\bibliography{refs}

\end{document}